\definecolor{rltblue}{rgb}{0,0,0.4}
\definecolor{drkred}{rgb}{0.6,0,0}
\definecolor{drkgreen}{rgb}{0,0.4,0}
\declaretheorem[numberwithin=section]{theorem}
\declaretheorem[sibling=theorem]{lemma}
\declaretheorem[sibling=theorem]{conjecture}
\declaretheorem[sibling=theorem]{proposition}
\declaretheorem[sibling=theorem]{corollary}
\declaretheorem[sibling=theorem]{definition}
\declaretheorem[sibling=theorem]{question}
\declaretheorem[numberwithin=theorem]{claim}
\declaretheorem[numbered=no,name=Claim]{claimstar}
\newcommand{\SR}{\text{SR}}
\renewcommand{\L}{\mathcal{L}}
\newcommand{\mc}[1]{\mathcal{#1}}
\DeclareMathOperator{\Inf}{Inf}
\renewcommand{\phi}{\varphi}
\newcommand{\bigwwedge}{%
	\mathop{
		\mathchoice{\bigwedge\mkern-15mu\bigwedge}
		{\bigwedge\mkern-12.5mu\bigwedge}
		{\bigwedge\mkern-12.5mu\bigwedge}
		{\bigwedge\mkern-11mu\bigwedge}
	}
}
\newcommand{\bigvvee}{%
	\mathop{
		\mathchoice{\bigvee\mkern-15mu\bigvee}
		{\bigvee\mkern-12.5mu\bigvee}
		{\bigvee\mkern-12.5mu\bigvee}
		{\bigvee\mkern-11mu\bigvee}
	}
}
\newmdtheoremenv[backgroundcolor=cyan]{theorem-prove}{Theorem}[theorem]
\newmdtheoremenv[backgroundcolor=cyan]{lemma-prove}{Lemma}[theorem]
\newmdtheoremenv[backgroundcolor=cyan]{proposition-prove}{Proposition}[theorem]
\newmdtheoremenv[backgroundcolor=yellow!40]{theorem-check}{Theorem}[theorem]
\newmdtheoremenv[backgroundcolor=yellow!40]{lemma-check}{Lemma}[theorem]
\newmdtheoremenv[backgroundcolor=yellow!40]{proposition-check}{Proposition}[theorem]
\DeclareMathOperator{\fin}{fin}
\def\hbar{{\bar{h}}}
\def\om{\omega}
\def\L{\mathcal L}
\def\Lomom{\L_{\om_1,\om}}
\newcommand{\newmacro}[1]{\mathfrak{#1}}
\newcommand{\A}{\newmacro{A}}
\newcommand{\E}{\newmacro{E}}
\newcommand{\vwE}{\overline{\E}}
\newcommand{\vwA}{\overline{\A}}
\newtheorem{sublemma}[claim]{Sublemma}
\newcommand{\bfSigma}{\mathbf{\Sigma}}
\newcommand{\bfPi}{\mathbf{\Pi}}
\newcommand{\bfDelta}{\mathbf{\Delta}}
\DeclareMathOperator{\Mod}{Mod}
\def\la{\mathopen{\langle}}
\def\ra{\mathclose{\rangle}}
\def\and{\mathrel{\&}}
\protected\def\All{{\mathfrak{A}}}
\protected\def\Exs{{\mathfrak{E}}}
\let\hyphen=\-
\protected\def\-{\relax\ifmmode\expandafter\overline\else\expandafter\hyphen\fi}
\let\hash=\#
\let\atsf=\@
\protected\def\@{\relax\ifmmode\expandafter\mathcal\else\expandafter\atsf\fi}
\let\bigwwedge=\bigdoublewedge
\let\bigvvee=\bigdoublevee
\newmdenv[{
	linecolor=\@todonotes@bordercolor,
	backgroundcolor=\@todonotes@backgroundcolor,
	fontcolor=\@todonotes@textcolor,
	roundcorner=4pt,
}]{todobox}
\numberwithin{figure}{section}
\crefname{figure}{Figure}{Figures}
\setlist{font=\textup}
\begin{document}
	
	\title{Optimal Syntactic Definitions of Back-and-Forth Types}
	\author{Ruiyuan Chen}
	\address[Chen]{
University of Florida\\
Department of Mathematics\\
1400 Stadium Rd\\
Gainesville, FL 32611, USA}
	\email{ruiyuan.chen@ufl.edu}
	\author{David Gonzalez} 
	\address[Gonzalez]{University of Notre Dame\\
Department of Mathematics\\
Hurley Hall, 255 Hurley, Notre Dame, IN 46556\\
  USA}
	\email{dgonza42@nd.edu}
	\author{Matthew Harrison-Trainor}
	\address[Harrison-Trainor]{University of Illinois Chicago\\
	Department of Mathematics, Statistics, and Computer Science\\
	851 S Morgan St, Chicago, IL 60607\\
	USA
}
	\email{mht@uic.edu}
	\thanks{The first author acknowledges support from the National Science Foundation under Grant No.\ \mbox{DMS-2224709}. The third author acknowledges support from a Sloan Research Fellowship and from the National Science Foundation under Grant No.\ \mbox{DMS-2153823}.}
	\subjclass{03C15, 03C57, 03D45, 03E15, 	03C75}
	
	\begin{abstract}
		The back-and-forth relations $\mathcal{M} \leq_\alpha \mathcal{N}$ are central to computable structure theory and countable model theory. It is well-known that the relation $\{(\mathcal{M},\mathcal{N}) : \mathcal{M} \leq_\alpha \mathcal{N}\}$ is (lightface) $\Pi^0_{2\alpha}$. We show that this is optimal as the set is $\mathbf{\Pi}^0_{2\alpha}$-complete. We are also interested in the one-sided relations $\{ \mathcal{N} : \mathcal{M} \leq_\alpha \mathcal{N}\}$ and $\{ \mathcal{N} : \mathcal{M} \geq_\alpha \mathcal{N}\}$ for a fixed $\mathcal{M}$, measuring the $\Pi_\alpha$ and $\Sigma_\alpha$ types of $\mathcal{M}$. We show that these sets are always $\mathbf{\Pi}^0_{\alpha + 2}$ and $\mathbf{\Pi}^0_{\alpha+3}$ respectively, and that for most $\alpha$ there are structures $\mathcal{M}$ for which these relations are complete at that level. In particular, there are structures $\mathcal{M}$ such that there is no $\Pi_\alpha$ (or even $\Pi_{\alpha+1})$ sentence $\varphi$ such that $\mathcal{N} \models \varphi \Longleftrightarrow \mathcal{M} \leq_\alpha \mathcal{N}$.
		
		This is unfortunate as not all $\Pi_{\alpha+2}$ sentences are preserved under $\leq_\alpha$. We define a new hierarchy of syntactic complexity closely related to the back-and-forth game, which can both define the back-and-forth types as well as be preserved by them. These hierarchies of formulas have already been useful in certain Henkin constructions, one of which we give in this paper, and another previously used by Gonzalez and Harrison-Trainor to show that every $\Pi_\alpha$ theory of linear orders has a model with Scott rank at most $\alpha+3$.
	\end{abstract}
	
	\maketitle
	
	\section{Introduction}
	
	Originally used to prove that any two countable dense linear orders without endpoints are isomorphic, the back-and-forth method is a classical tool in mathematical logic dating back over one hundred years. A general viewpoint, beginning with Ehrenfeucht \cite{Ehrenfeucht} and Fra\"iss\'e \cite{Fraisse}, is that by playing back-and-forth games between two first-order structures $\mc{M}$ and $\mc{N}$ we can capture which sets of formulas they agree on. Generally two players, \textsf{Spoiler} and \textsf{Duplicator}, take turns playing elements from the two structures, with \textsf{Duplicator} trying to maintain atomic facts between the elements in $\mc{M}$ and the elements in $\mc{N}$. In the Ehrenfeucht--Fra\"iss\'e game most commonly used in model theory, both players play single elements, and (in a finite language) the second player wins the game of length $n$ if and only if the two structures $\mc{M}$ and $\mc{N}$ satisfy the same sentences with quantifier depth $n$. Other versions, such as pebble games, can be used for finite variable logics.
	
	In this paper, we work with the back-and-forth game that corresponds to infinitary logic, and our main goal is to understand the complexity of deciding which player has a winning strategy. The key features of this particular back-and-forth game are that (a) on their turn, players play tuples of arbitrary size with the size chosen by \textsf{Spoiler}, (b) \textsf{Spoiler} must alternate between playing in $\mc{M}$ and playing in $\mc{N}$, with \textsf{Duplicator} alternating between playing in $\mc{N}$ and playing in $\mc{M}$, and (c) the games are played over a possibly infinite ordinal clock.
	
	This back-and-forth game yields the back-and-forth relations. (For simplicity, we assume that all languages in this paper are relational.)
	\begin{definition}\label{def:bfasym}
		The \textit{standard asymmetric back-and-forth relations} $\leq_\alpha$, for a countable ordinal $\alpha < \omega_1$, are defined by:
		\begin{itemize}
			\item $(\mc{M},\bar{a}) \leq_0 (\mc{N},\bar{b})$ if $\bar{a}$ and $\bar{b}$ satisfy the same quantifier-free formulas from among the first $|\bar{a}|$-many formulas.
			\item For $\alpha > 0$, $(\mc{M},\bar{a}) \leq_\alpha (\mc{N},\bar{b})$ if for each $\beta < \alpha$ and $\bar{d} \in \mc{N}$ there is $\bar{c} \in \mc{M}$ such that $(\mc{N},\bar{b} \bar{d}) \leq_\beta (\mc{M},\bar{a} \bar{c})$.
		\end{itemize}
		We define $\bar{a} \equiv_\alpha \bar{b}$ if $\bar{a} \leq_\alpha \bar{b}$ and $\bar{b} \leq_\alpha \bar{a}$.
	\end{definition}
	\noindent The interpretation of $(\mc{M},\bar{a}) \leq_\alpha (\mc{N},\bar{b})$ is that in the back-and-forth game between $\mc{M}$ and $\mc{N}$, starting with the partial isomorphism $\bar{a} \mapsto \bar{b}$ and with the first player \textsf{Spoiler} to play next in $\mc{N}$, the second player \textsf{Duplicator} can play without losing along an ordinal clock $\alpha$.
	The content of the classical back-and-forth argument is then that if \textsf{Duplicator} can continue playing forever, and $\mc{M}$ and $\mc{N}$ are countable, then $\mc{M} \cong \mc{N}$.
	
	The back-and-forth relations are a measure of similarity between $(\mc{M},\bar{a})$ and $(\mc{N},\bar{b})$. For example, for Boolean algebras $A$ and $B$:
	\begin{enumerate}
		\item $A \leq_1 B$ means that $A$ has at least as many elements as $B$;
		\item $A \leq_2 B$ means that $A$ and $B$ have the same number of elements and that $A$ has at least as many atoms as $B$;
		\item $A \leq_n B$ for $n \geq 3$ is determined in general by how $A$ and $B$ split up into atoms, atomless subalgebras, $1$-atoms, and so on. See \cite{HarrisMontalban}.
	\end{enumerate}
	Back-and-forth types have been central in the computability-theoretic study of Boolean algebras and particularly in the study of low$_n$ Boolean algebras \cite{DJ,Thurber,KnightStob,HM2}. For linear orders, $L \leq_1 M$ means that $M$ has at least as many elements as $L$, and $L \leq_2 M$ is more complicated, but Montalb\'an \cite{MLO} gives a complete description of $\equiv_2$ for linear orders. For some particular linear orders the back-and-forth relations are easier to describe, e.g., in \cite{AshKnight} it is shown that for $\gamma \geq 1$, $\omega^\beta \leq_{2 \gamma} \omega^\alpha$ if and only if $\alpha,\beta \geq \gamma$ or $\alpha = \beta$. For algebraic fields $\mathbb{Q} \subseteq E,F \subseteq \overline{\mathbb{Q}}$, $E \leq_1 F$ means that every polynomial $p(z) \in \mathbb{Q}[x]$ which as a root in $F$ has a root in $E$, and $E \equiv_1 F$ means that $E$ and $F$ are isomorphic which reflects the fact that it is easy to classify algebraic fields up to isomorphism. For general fields $E$ and $F$ the back-and-forth relations are harder to describe, but for example $E \leq_2 F$ implies that every finitely generated subfield of $F$ is (isomorphic to) a subfield of $E$. For every $\alpha$, there are fields $E$ and $F$ such that $E \equiv_\alpha F$ but $E \ncong F$.\footnote{This is a consequence of the Borel completeness of fields, see \cite{FS89} or \cite{MPSS}.} 
	
	These back-and-forth relations are related to the infinitary logic $\mc{L}_{\omega_1 \omega}$ which extends the standard finitary first-order logic by allowing countably infinite disjunctions and conjunctions. The formulas of $\mc{L}_{\omega_1 \omega}$ are stratified in terms of their complexity as measured by counting the number of alternations between existential and universal quantifiers.
	For an ordinal $\alpha < \omega_1$:
	\begin{itemize}
		\item A formula is $\Sigma_0$ and $\Pi_0$ if it is finitary quantifier-free.
		\item A formula is $\Sigma_{\alpha}$ if it is of the form
		\[ \bigdoublevee_i \exists \bar{x}_i \psi_i(\bar{x}_i)\]
		where each $\psi_i$ is $\Pi_{\beta}$ for some $\beta < \alpha$.
		\item A formula is $\Pi_{\alpha}$ if it is of the form
		\[ \bigdoublewedge_i \forall \bar{x}_i \psi_i(\bar{x}_i)\]
		where each $\psi_i$ is $\Pi_{\beta}$ for some $\beta < \alpha$.
	\end{itemize}
	The back-and-forth relations $\leq_\alpha$ characterize when two structures $\mc{M}$ and $\mc{N}$ satisfy the same $\Sigma_\alpha$/$\Pi_\alpha$ sentences as in the following theorem of Karp \cite{Karp} (see, e.g., \cite{MonBook}, Theorem II.36).
	
	\begin{restatable}[Karp]{theorem}{thmKarp}\label{thm:Karp}
		For any non-zero ordinal $\alpha$, structures $\mc{M}$ and $\mc{N}$ and tuples $\bar{a}\in\mc{M}$ and $\bar{b}\in\mc{N}$, the following are equivalent:
		\begin{enumerate}
			\item $(\mc{M},\bar{a})\leq_\alpha (\mc{N},\bar{b})$.
			\item Every $\Pi_\alpha$ formula true of $\bar{a}$ in $\mc{M}$ is true of $\bar{b}$ in $\mc{N}$.
			\item Every $\Sigma_\alpha$ formula true of $\bar{b}$ in $\mc{N}$ is true of $\bar{a}$ in $\mc{M}$.
		\end{enumerate} 
	\end{restatable}
	\noindent Both these back-and-forth relations and the infinitary logic $\mc{L}_{\omega_1 \omega}$ have proven to be very useful in countable structure theory, the study of countable structures using tools from model theory, descriptive set theory, and computability theory. See, e.g., the standard texts \cite{AshKnight} and \cite{MonBook1,MonBook} where they appear throughout.
	
	\medskip
	This paper is about the complexity of the relations $\mc{M} \leq_\alpha \mc{N}$ for different $\alpha$. One might hope, because the $\alpha$-back-and-forth relations are about preservation of $\Sigma_\alpha$/$\Pi_\alpha$ formulas, that there would be a characterization which is also on the level of $\alpha$-many alternations of quantifiers. While interesting in its own right, this question is motivated by certain appearances of the back-and-forth relations in arguments in countable model theory. Before giving our main results, we will talk about some of the motivation behind this question to set the stage.
	
	Our main motivation has to do with Scott's isomorphism theorem. Given a countable $\mc{L}$-structure $\mc{M}$, the set of $\mc{L}$-structures isomorphic to $\mc{M}$ is naively an analytic set in the Polish space $\Mod(\mc{L})$ of countably infinite $\mc{L}$-structures. It is a surprising result of Scott \cite{Sco65} that this set is actually always Borel and thus any countable structure admits a characterization up to isomorphism. Scott's argument uses the back-and-forth relations\footnote{Technically, Scott's original argument used a different symmetric variant of the back-and-forth relations, but all of the ideas are the same whichever variant one uses.}: He shows that for any structure $\mc{M}$ there is a countable ordinal $\alpha$ such that for any countable structure $\mc{N}$,
	\[\mc{M} \leq_\alpha \mc{N} \Longleftrightarrow \mc{M} \cong \mc{N}.\]
	From this, one checks that the set $\{\mc{N} : \mc{M} \leq_\alpha \mc{N}\}$ is $\bfPi^0_{2\alpha}$ by writing out the definition of the back-and-forth relations---each step of the inductive definition uses two quantifiers---and concludes that $\{ \mc{N} : \mc{M} \cong \mc{N}\}$ is $\bfPi^0_{2\alpha}$ and hence Borel. (The fact that the back-and-forth relations $\leq_\alpha$ have a $\Pi_{2\alpha}$ definition is well-known and is used in various contexts. It appears as Exercise VIII.6 in \cite{MonBook} and underlies, for example, Nadel's \cite{Nadel} observation that a computable structure has a computable Scott sentence if and only if it has computable Scott rank.)
	
	For a given $\mc{M}$, the Borel complexity of this set $\{ \mc{N} : \mc{M} \cong \mc{N}\}$, as measured by Wadge reducibility, is called the \textit{Scott complexity} of $\mc{M}$ \cite{AGNHTT}. By the Lopez-Escobar theorem \cite{Lop66,Vau74}, the Scott complexity of $\{ \mc{N} : \mc{M} \cong \mc{N}\}$ is the same as the least complexity of a Scott sentence for $\mc{M}$, that is, a sentence $\varphi$ of the infinitary logic $\mc{L}_{\omega_1 \omega}$ that characterizes $\mc{M}$ up to isomorphism among countable structures: for any countable structure $\mc{N}$,
	\[ \mc{M} \models \varphi \Longleftrightarrow \mc{N} \cong \mc{M}.\]
	Thus in the context of the mentioned proof of Scott's isomorphism theorem,  $\{ \mc{N} : \mc{M} \cong \mc{N}\}$ is $\bfPi^0_{2\alpha}$ and so is defined by a $\Pi_{2\alpha}$ sentence of $\mc{L}_{\omega_1 \omega}$. This gives an upper bound of $\Pi_{2\alpha}$ on the Scott complexity of $\mc{M}$, but it is far from the lower bound of $\Pi_\alpha$.\footnote{We also note that this is related to the variations between different notions of Scott rank. There have been many different non-equivalent definitions of Scott rank, some of them using the back-and-forth relations, and some using the complexity of Scott sentences.
		See \cite{MonSR} for a survey.}

	In this paper, we ask whether this upper bound can be improved, and if so by how much? We work not just with Scott complexity but also with the problem of defining the back-and-forth relations $\leq_\alpha$ in general. Though we generally state our results in terms of structures, considering sets such as $\{ \mc{N} : \mc{M} \leq_\alpha \mc{N}\}$, by naming constants our results also apply to tuples, e.g., $\{ (\mc{N},\bar{b}) : (\mc{M},\bar{a}) \leq_\alpha (\mc{N},\bar{b})\}$. Such a set represents the $\Pi_\alpha$-type of $\bar{a}$ in $\mc{M}$, and these types are important objects in countable structure theory.
	
	We show that the $\bfPi^0_{2\alpha}$ definition is the best definition of $\leq_\alpha$ in terms of \textit{both} of the structures $\mc{M}$ and $\mc{N}$, that is, the set $\{ (\mc{M},\mc{N}) : \mc{M} \leq_\alpha \mc{N}\}$ is $\bfPi^0_{2\alpha}$-complete for most languages and $\alpha$. Fixing one of the structures, this gives a ``schematic'' $\bfPi^0_{2\alpha}$ definition of the sets $\{\mc{N} : \mc{M} \leq_\alpha \mc{N}\}$ and $\{ \mc{N} : \mc{M} \geq_\alpha \mc{N}\}$ where we replace quantifiers over $\mc{M}$ by infinitary conjunctions and disjunctions. However, this is not the best possible definition of these sets; we show that there are simpler $\bfPi^0_{\alpha+2}$ and $\bfPi^0_{\alpha+3}$ definitions for $\{\mc{N} : \mc{M} \leq_\alpha \mc{N}\}$ and $\{ \mc{N} : \mc{M} \geq_\alpha \mc{N}\}$ respctively. These simpler definitions are not ``schematic'' in $\mc{M}$ and are non-effective. The exact complexities are given by the following theorem and are proved to be best possible. There are exceptional cases when $\alpha$ is too small or too close to a limit ordinal, but for the most part we have the same bounds.
	
	\begin{theorem}\label{thm:main-complexity}
		For any language containing a relation symbol of arity $\geq 2$,\footnote{Any language with a binary relation symbol can effectively bi-interpret any theory in any language. In particular, these maps preserve properties regarding back-and-forth relations, so such languages exhibit all properties exhibited in any language. See \cite{MonBook1} Section VI.3.2 and \cite{MonBook} Theorem XI.7 for more details.} the set
		\[ \{(\mc{M},\mc{N}) : \mc{M} \leq_\alpha \mc{N} \}\]
		is $\bfPi^0_{2\alpha}$-complete. Moreover:
		\begin{enumerate}
			\item The sets
			\[ \{ \mc{N} : \mc{M} \leq_\alpha \mc{N} \} \]
			have the following complexities $\Gamma$ depending on $\alpha$, and there is a structure $\mc{M}$ such that the set is $\Gamma$-complete.
			
			\begin{enumerate}
				\item For $\alpha = 1$, $\Gamma = \bfPi^0_1  =\bfPi^0_\alpha$.
				\item For $\alpha = \lambda$ a limit ordinal, $\Gamma = \bfPi^0_\lambda = \bfPi^0_\alpha$.
				\item For $\alpha = \lambda+1$ the successor of a limit ordinal, $\Gamma = \bfPi^0_{\lambda+2} = \bfPi^0_{\alpha+1}$.
				\item For $\alpha = \beta + 2$, $\Gamma = \bfPi^0_{\alpha+2}$.
			\end{enumerate}
			
			\item The sets
			\[ \{ \mc{N} : \mc{M} \geq	_\alpha \mc{N} \} \]
			have the following complexities $\Gamma$ depending on $\alpha$, and there is a structure $\mc{M}$ such that the set is $\Gamma$-complete.
			
			\begin{enumerate}
				\item For $\alpha = 1$, $\Gamma = \bfPi^0_2 = \bfPi^0_{\alpha+1}$.
				\item For $\alpha = 2$, $\Gamma = \bfPi^0_3 = \bfPi^0_{\alpha+1}$.
				\item For $\alpha = \lambda$ a limit ordinal, $\Gamma = \bfPi^0_\lambda = \bfPi^0_\alpha$.
				\item For $\alpha = \lambda+1$ the successor of a limit ordinal, $\Gamma = \bfPi^0_{\lambda+2} = \bfPi^0_{\alpha+1}$.
				\item For $\alpha = \lambda + 2$ the double successor of a limit ordinal, $\Gamma = \bfPi^0_{\lambda+4} = \bfPi^0_{\alpha+2}$.
				\item For $\alpha = \beta + 3$, $\Gamma = \bfPi^0_{\alpha+3}$.
			\end{enumerate}
		\end{enumerate}
	\end{theorem}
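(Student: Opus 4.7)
The plan is to establish the upper bounds by induction on $\alpha$ and then to construct witnessing structures for each completeness claim.

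\textbf{Upper bounds.} For the two-variable relation, a direct induction on $\alpha$ unwinding $\mc M\leq_\alpha \mc N \iff \forall \beta<\alpha\,\forall \bar d\in\mc N\,\exists \bar c\in\mc M\,(\mc N,\bar d)\leq_\beta(\mc M,\bar c)$ gives the $\bfPi^0_{2\alpha}$ bound: each successor step adds two levels via the outer $\forall \bar d\exists\bar c$, while limits take the supremum. For the one-sided sets one must be more careful. Introduce $p(\alpha)$ and $q(\alpha)$ for the Borel complexities of $\{(\mc N,\bar b):(\mc M,\bar a)\leq_\alpha (\mc N,\bar b)\}$ and $\{(\mc N,\bar b):(\mc N,\bar b)\leq_\alpha (\mc M,\bar a)\}$ respectively, with $(\mc M,\bar a)$ fixed. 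The back-and-forth definition yields the recursion $p(\alpha+1)\leq q(\alpha)+2$ and $q(\alpha+1)\leq p(\alpha)+2$, while Karp's theorem gives the sharper alternatives $p(\alpha)\leq \alpha$ and $q(\alpha)\leq \alpha+1$ in the cases where a canonical $\Pi_\alpha$- (resp.\ $\Sigma_\alpha$-) formula provably exists---notably at $\alpha=1,2$ and at limit $\alpha$. Interleaving the recursive and Karp bounds and applying one outer $\forall \bar d\exists \bar c$ produces exactly the case split in the theorem; swapping the roles of $\mc M$ and $\mc N$ gives the $\geq$ bounds, and the extra level there reflects the opposite parity of the outer quantifier.

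\textbf{Completeness of the two-variable case.} We reduce a universal $\bfPi^0_{2\alpha}$ subset $A\subseteq 2^\omega$ to $\{(\mc M,\mc N):\mc M\leq_\alpha \mc N\}$ via a continuous map $x\mapsto (\mc M_x,\mc N_x)$. The two structures are built as labelled trees (or graphs) of depth $\alpha$, each of whose levels encodes one of the $2\alpha$ quantifier alternations in the standard $\bfPi^0_{2\alpha}$-definition of $A$; the back-and-forth game played between $\mc M_x$ and $\mc N_x$ then reproduces membership of $x$ in $A$ exactly because Spoiler's and Duplicator's $2\alpha$ alternating moves trace out the matching quantifier prefix. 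The footnoted bi-interpretation reduces any language with a binary relation to the graph case.

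\textbf{Completeness of the one-sided cases and the main obstacle.} For each applicable $\alpha$ we must produce a single $\mc M$ such that $\{\mc N:\mc M\leq_\alpha \mc N\}$ is $\Gamma$-hard at the stated level. The strategy is to make $\mc M$ a disjoint union of $\omega$-many ``gadgets'' $\mc M_n$, each realizing a distinct $(\alpha{-}1)$-back-and-forth type, chosen to encode a universal $\Gamma$-problem. Then $\mc M\leq_\alpha \mc N$ becomes the condition that $\mc N$ realize each $\mc M_n$-type, a $\forall n\exists \bar c$-quantification over a $\Pi_{\alpha-1}$-condition that we arrange to be $\bfPi^0_{\alpha+2}$-hard. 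The $\geq$-case is handled dually, gaining an extra level because the quantifier structure at the boundary is reversed. \emph{The main obstacle} is that naive families of gadgets (linear orders, trees, equivalence classes) admit a canonical $\Pi_\alpha$- or $\Pi_{\alpha+1}$-formula characterizing their family membership, collapsing the set down to $\bfPi^0_\alpha$ or $\bfPi^0_{\alpha+1}$. To hit the claimed level one must design the $(\alpha{-}1)$-types to be incomparable in a strong sense: any attempted $\Pi_{\alpha+1}$-definition of the family admits two $(\alpha{-}1)$-equivalent structures on opposite sides, giving a diagonal contradiction. This is the precise content of the abstract's remark that no $\Pi_\alpha$ or $\Pi_{\alpha+1}$ sentence suffices, and it is the technical heart of the proof; concretely, it is achieved by nesting base structures at every lower back-and-forth level inside each gadget and arranging them along an $(\alpha{-}1)$-type tree with carefully separated branches. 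The exceptional small-$\alpha$ and limit cases in the theorem correspond precisely to the ranges where canonical formulas genuinely exist, forcing the stated collapses.
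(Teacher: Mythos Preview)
Your upper-bound argument does not reach the claimed $\bfPi^0_{\alpha+2}$ level. The recursion $p(\alpha+1)\leq q(\alpha)+2$, $q(\alpha+1)\leq p(\alpha)+2$ is correct but only recovers the classical $\bfPi^0_{2\alpha}$ bound: already at $\alpha=4$ it gives $p(4)\leq q(3)+2\leq p(2)+4\leq 8$, while the theorem asserts $p(4)=6$. Your ``Karp alternative'' $p(\alpha)\leq\alpha$ is simply false for $\alpha\geq 2$ (that is precisely what the completeness half of the theorem says), so restarting the recursion at $\alpha=2$ or at limits does not help; the gap reappears two steps later. The missing ingredient is that for a \emph{fixed} countable $\mc{M}$ and any $\alpha$, each $\Pi_\alpha$-type realized in $\mc{M}$ is isolated within $\mc{M}$ by a single $\Pi_\alpha$ formula $\theta_{\bar b}$. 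The paper's inductive step exploits this by jumping from $\alpha$ to $\alpha+2$: it writes down a sentence $\chi$ of the shape $\bigdoublewedge\forall\bigdoublevee\bigdoublewedge\exists\,\theta_{\bar b\bar c}$, which is $\Pi_{\alpha+4}$ because the inner formula is only $\Pi_\alpha$ rather than $\Pi_{\alpha+2}$; combined with a second formula $\psi$ (which uses the inductive hypothesis to certify that $\theta_{\bar b}$ really pins down the back-and-forth type), this yields $p(\alpha+2)\leq\alpha+4$. Your recursion never sees $\theta_{\bar b}$ and so never escapes the doubling.

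On the hardness side, the proposal contains no construction at all. ``Disjoint unions of gadgets with carefully separated branches'' is a description of what any example must look like, not an example. The paper's argument has three distinct pieces you are missing: (i) a concrete $\bfPi^0_4$-hard example for $\{\mc{N}:\mc{N}\geq_2\mc{M}\}$ built from flower graphs encoding families of subsets of $\omega$, with a specific combinatorial characterization of $\leq_2$ between such graphs; (ii) a bootstrapping claim that converts this into a $\bfPi^0_6$-hard example at level $3$; and (iii) a jump-inversion machinery (replacing edges by $\alpha$-equivalent but $(\alpha{+}1)$-inequivalent pairs of structures) that transports these finite base cases to arbitrary successor ordinals, together with separate arguments using Montalb\'an's tree-of-structures theorem for the exceptional limit and successor-of-limit cases. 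None of these appear in your sketch, and the ``diagonal contradiction'' you describe is not how the hardness is established.
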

	
	\noindent The first part of the paper is devoted to proving this theorem. We note that while $\{ (\mc{M},\mc{N}) : \mc{M} \leq_\alpha \mc{N}\}$ is (lightface) $\Pi^0_{2\alpha}$ when $\alpha$ is computable, the other bounds where $\mc{M}$ is fixed are not generally effective in $\mc{M}$. That is, for example, if $\mc{M}$ is computable, we do not know whether $\{\mc{N} : \mc{M} \leq_n \mc{N}\}$ is (lightface) $\Pi^0_{n+2}$. In \cite{AlvirCsimaHarrisonTrainor} Alvir, Csima, and Harrison-Trainor undertake an analysis of the computability of Scott sentences for computable structures with $\Pi_2$ Scott sentences; it is shown that a computable structure $\mc{M}$ can have a $\Pi_2$ Scott sentence but no computable $\Sigma_4$ Scott sentence. This implies that for this particular $\mc{M}$ the set $\{\mc{N} : \mc{M} \leq_2 \mc{N}\}$ is (boldface) $\bfPi^0_2$ but not (lightface) $\Sigma^0_4$.
	
	\begin{question}
		Assuming that $\mc{M}$ is computable, what is the lightface complexity of the sets from Theorem \ref{thm:main-complexity}?
	\end{question}
	
	\noindent The fact that $2+2 = 2 \cdot 2$ means that in the case $\alpha = 2$ the complexities $\bfPi^0_{2\alpha}$ and $\bfPi^0_{\alpha+2}$ are the same, and so we get no evidence about the behavior in general.
	
	In the second part of the paper, we will consider the following question. What class of $\mc{L}_{\omega_1 \omega}$ formulas corresponds \textit{exactly} with the back-and-forth relations? Namely, we want a class of formulas $\Gamma_\alpha$ that has the properties:
	\begin{enumerate}
		\item for each $\mc{M}$ and $\alpha$ there is a sentence $\varphi \in \Gamma_\alpha$ such that
		\[   \mc{N} \models \varphi \Longleftrightarrow \mc{M} \leq_\alpha \mc{N},\]
		\item for any $\varphi \in \Gamma_\alpha$, if $\mc{M} \leq_\alpha \mc{N}$, and $\mc{M} \models \varphi$, then $\mc{N} \models \varphi$.
	\end{enumerate}
	While $\Pi_\alpha$ formulas satisfy (2), above we saw that one requires $\Pi_{\alpha+2}$ formulas for (1). We will introduce a new hierarchy of formulas, which we call the $\E_\alpha/\A_\alpha$ hierarchy, which is better suited for characterizing the back-and-forth relations. The definition is somewhat complicated, but the essence is that when counting quantifiers, $\forall \bigdoublevee \bigdoublewedge \exists$ counts only as two alternations of quantifiers, whereas in the $\Sigma_\alpha$/$\Pi_\alpha$ hierarchy, this would count for four alternations of quantifiers. There are also associated $\vwE_\alpha$ and $\vwA_\alpha$ formulas which are the closure, under $\bigdoublewedge$ and $\bigdoublevee$, of the $\E_\alpha$ and $\A_\alpha$ formulas respectively. We leave the full definition to Section \ref{sec:defineAE}. We prove that these classes of formulas satisfy (1) and (2) above:
	\begin{restatable}{proposition}{thmbnfformulas}\label{thm:bnfFormulas}
		For $\mc{M}$ a countable structure, $\bar{a} \in \mc{M}$, and $\alpha \geq 1$, there are a $\vwE_\alpha$ formula $\varphi_{\bar{a},\mc{M},\alpha}(\bar{x})$ and $\A_\alpha$ formula $\psi_{\bar{a},\mc{M},\alpha}(\bar{x})$ such that for $\mc{N}$ any structure,
		\[ \mc{N} \models \varphi_{\bar{a},\mc{M},\alpha}(\bar{b}) \Longleftrightarrow (\mc{M},\bar{a}) \geq_\alpha (\mc{N},\bar{b}) \]
		and
		\[ \mc{N} \models \psi_{\bar{a},\mc{M},\alpha}(\bar{b}) \Longleftrightarrow (\mc{M},\bar{a}) \leq_\alpha (\mc{N},\bar{b}).\]
	\end{restatable}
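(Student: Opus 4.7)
The plan is to construct the formulas $\psi_{\bar{a},\mc{M},\alpha}$ and $\varphi_{\bar{a},\mc{M},\alpha}$ by simultaneous transfinite induction on $\alpha$, with the inductive step driven directly by Definition~\ref{def:bfasym}. Concretely, for $\alpha \geq 1$ I set
\[ \psi_{\bar{a},\mc{M},\alpha}(\bar{x}) \;:=\; \bigdoublewedge_{\beta < \alpha} \forall \bar{y}\, \bigdoublevee_{\bar{c}\in \mc{M}} \varphi_{\bar{a}\bar{c},\mc{M},\beta}(\bar{x},\bar{y}) \]
and
\[ \varphi_{\bar{a},\mc{M},\alpha}(\bar{x}) \;:=\; \bigdoublewedge_{\beta < \alpha} \bigdoublewedge_{\bar{c} \in \mc{M}} \exists \bar{y}\, \psi_{\bar{a}\bar{c},\mc{M},\beta}(\bar{x},\bar{y}), \]
where at the ground level $\beta = 0$ the inner formula is replaced by the quantifier-free formula expressing $\leq_0$ on the corresponding concatenated tuples. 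Countability of $\mc{M}$ and of $\alpha < \omega_1$ keeps every index set countable, so these formulas belong to $\mc{L}_{\omega_1\omega}$.

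Semantic correctness is immediate by induction: in $\psi_{\bar{a},\mc{M},\alpha}$ the quantifier $\forall \bar{y}$ captures \textsf{Spoiler}'s play in $\mc{N}$, the disjunction $\bigdoublevee_{\bar{c}}$ ranges over \textsf{Duplicator}'s possible matching responses from $\mc{M}$, and the outer conjunction over $\beta < \alpha$ records the back-and-forth condition at every lower level; $\varphi_{\bar{a},\mc{M},\alpha}$ is dual, using the equivalence $(\mc{M},\bar{u}) \geq_\beta (\mc{N},\bar{v}) \Longleftrightarrow (\mc{N},\bar{v}) \leq_\beta (\mc{M},\bar{u})$ to pair the recursion of the $\varphi$'s with that of the $\psi$'s. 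Unfolding these definitions against Definition~\ref{def:bfasym} yields the two stated equivalences by induction on $\alpha$.

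The real work is syntactic: one must verify that the two displayed formulas in fact sit in the classes $\A_\alpha$ and $\vwE_\alpha$ of Section~\ref{sec:defineAE}. This is the feature for which the new hierarchy is designed. The block $\forall \bar{y}\, \bigdoublevee_{\bar{c}}(\,\cdot\,)$ applied to $\vwE_\beta$-formulas and the block $\bigdoublewedge_{\bar{c}}\, \exists \bar{y}\,(\,\cdot\,)$ applied to $\A_\beta$-formulas are by design the ``one-step'' constructors that promote level $\beta$ to the next level in the $\A$- and $\E$-hierarchies respectively, while the outer $\bigdoublewedge_{\beta < \alpha}$ is absorbed into $\A_\alpha$ directly, and into $\vwE_\alpha$ via the closure of $\E_\alpha$ under $\bigdoublewedge$. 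The main obstacle is bookkeeping at limit ordinals $\lambda$, where I must confirm that the definitions of $\A_\lambda$ and $\vwE_\lambda$ admit a countable conjunction of formulas drawn from strictly lower levels as a single formula at level $\lambda$; once this closure property is in hand---which is precisely what the definitions in Section~\ref{sec:defineAE} are engineered to provide---the induction closes without further incident.
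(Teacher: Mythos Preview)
Your recursive construction and its semantic verification are exactly what the paper does, and the inductive step for $\alpha > 1$ is correct as you have it: $\bigdoublevee_{\bar{c}} \varphi_{\bar{a}\bar{c},\mc{M},\beta}$ stays in $\vwE_\beta$, so $\bigdoublewedge \forall$ of these lands in $\A_\alpha$; and $\exists \bar{y}\,\psi_{\bar{a}\bar{c},\mc{M},\beta}$ lies in $\E_\alpha$, so the outer $\bigdoublewedge$ lands in $\vwE_\alpha$.

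There is, however, a genuine gap at the base case $\alpha = 1$, which you wave through with the phrase ``by design.'' The hierarchy in Section~\ref{sec:defineAE} starts at level $1$ with $\A_1 := \Pi_1$; there is no $\vwE_0$ to feed into your one-step constructor. Your formula
\[
\psi_{\bar{a},\mc{M},1}(\bar{x}) \;=\; \bigdoublewedge_n \forall \bar{y}\; \bigdoublevee_{\bar{c}\in\mc{M}^n} \theta_{\bar{a}\bar{c}}(\bar{x},\bar{y})
\]
has, on its face, an \emph{infinite} disjunction of quantifier-free formulas under the universal quantifier, which would make it $\Pi_2$, not $\Pi_1 = \A_1$. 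The point you are missing---and which the paper makes explicit---is that for each fixed arity $n$ there are only finitely many distinct formulas $\theta_{\bar{a}\bar{c}}$ (since $\leq_0$ only concerns the first $|\bar{a}\bar{c}|$-many atomic formulas), so the inner disjunction is in fact finitary and the formula is genuinely $\Pi_1$. Contrary to your diagnosis, the delicate case is not the limit ordinal (where the closure properties of $\A_\lambda$ and $\vwE_\lambda$ absorb the outer conjunction without incident) but this base step; once you add the finiteness observation the induction closes.
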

	
	\begin{restatable}{proposition}{proptransferover}\label{prop:transfer-over-bf}
		Suppose that $(\mc{M},\bar{a}) \leq_\alpha (\mc{N},\bar{b})$ for $\alpha \geq 1$. Then given a $\vwE_\alpha$ formula $\varphi(\bar{x})$ and a $\vwA_\alpha$ formula $\psi(\bar{x})$,
		\[ \mc{N} \models \varphi(\bar{b}) \Longrightarrow \mc{M} \models \varphi(\bar{a})\]
		and
		\[ \mc{M} \models \psi(\bar{a}) \Longrightarrow \mc{N} \models \psi(\bar{b})\]
	\end{restatable}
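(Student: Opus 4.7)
The plan is to prove both statements simultaneously by transfinite induction on $\alpha \geq 1$. Since, by definition, $\vwE_\alpha$ and $\vwA_\alpha$ are the closures of $\E_\alpha$ and $\A_\alpha$ under $\bigdoublewedge$ and $\bigdoublevee$, and both implications commute with countable conjunctions and disjunctions (a disjunction holds iff some disjunct does; a conjunction iff every conjunct does), it suffices to handle the generating classes $\E_\alpha$ and $\A_\alpha$; the full $\vwE_\alpha$/$\vwA_\alpha$ statements then follow by an easy inner structural induction on the formula.

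For the base case $\alpha = 1$, the relation $\leq_1$ is determined by a finite initial segment of the quantifier-free type, and $\E_1$/$\A_1$ formulas at this level reduce to standard existential/universal closures of such quantifier-free formulas; the claimed transfer is then the classical preservation of $\Sigma_1$/$\Pi_1$ formulas by $\leq_1$. For the inductive step, assume the result for all $\beta < \alpha$ and that $(\mc{M}, \bar{a}) \leq_\alpha (\mc{N}, \bar{b})$. An $\E_\alpha$ formula has the schematic form $\exists \bar{y}\,\chi(\bar{x}, \bar{y})$ where $\chi$ is a $\vwA_\beta$ formula for some $\beta < \alpha$; this is precisely the novel feature of the new hierarchy, namely that the outermost $\exists$ can be followed by a $\vwA_\beta$ formula of strictly smaller back-and-forth complexity, with any interleaved $\bigdoublewedge$/$\bigdoublevee$ not contributing to that complexity. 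Suppose $\mc{N} \models \exists \bar{y}\,\chi(\bar{b}, \bar{y})$ with witness $\bar{d} \in \mc{N}$. Unpacking the defining clause of $\leq_\alpha$ produces $\bar{c} \in \mc{M}$ with $(\mc{N}, \bar{b}\bar{d}) \leq_\beta (\mc{M}, \bar{a}\bar{c})$. Applying the inductive hypothesis at level $\beta$ to the $\vwA_\beta$ formula $\chi$---note that the direction reverses---yields $\mc{M} \models \chi(\bar{a}, \bar{c})$, hence $\mc{M} \models \exists \bar{y}\,\chi(\bar{a}, \bar{y})$. The $\A_\alpha$ case is entirely dual, with the outermost $\forall$ handled using an arbitrary $\bar{c} \in \mc{N}$, $\leq_\alpha$ supplying a matching $\bar{d} \in \mc{M}$, and the inductive $\vwE_\beta$ transfer applied in reverse.

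The principal technical point, and the reason for proving both transfers simultaneously rather than separately, is the bookkeeping of direction switches inherent in the back-and-forth game: each unpacking of $\leq_\alpha$ to $\leq_\beta$ interchanges the roles of $\mc{M}$ and $\mc{N}$, so establishing a $\vwE_\alpha$ statement in $\mc{M}$ from one in $\mc{N}$ requires invoking the inductive hypothesis for $\vwA_\beta$ (not $\vwE_\beta$) on a $\leq_\beta$ inequality in which the structures have swapped sides. The main obstacle is thus less the induction itself than verifying that the definition of $\A_\alpha$/$\E_\alpha$ in Section~\ref{sec:defineAE} is set up so that every generating formula in these classes decomposes as ``one outermost quantifier followed by a $\vwA_\beta$ or $\vwE_\beta$ formula with $\beta < \alpha$''; granted this structural match with the game step, the induction is routine.
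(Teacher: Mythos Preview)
Your proposal is correct and follows essentially the same approach as the paper: simultaneous transfinite induction on $\alpha$, with the base case $\alpha=1$ reducing to classical $\Sigma_1/\Pi_1$ preservation, the inductive step unpacking the outermost quantifier via the defining clause of $\leq_\alpha$ (with the direction switch), and an inner structural induction to pass from $\E_\alpha/\A_\alpha$ to $\vwE_\alpha/\vwA_\alpha$. One minor point: by definition $\E_\alpha$ is the closure of $\bigcup_{\beta<\alpha}\vwA_\beta$ under both $\exists$ and $\bigdoublevee$, so the normal form is $\bigdoublevee_i \exists \bar{y}_i\,\chi_i$ rather than a single $\exists \bar{y}\,\chi$; but since you already observed that disjunctions commute with the transfer, this is harmless.
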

	At first, we thought that these classes of formulas were only a curiosity, but they have proved very useful. For example, they appear naturally in the proof of the upper bounds in 1(d) and 2(d) of Theorem \ref{thm:main-complexity}. More importantly, they have been useful for certain Henkin constructions. The most important example of this is in \cite{GHT}, where the proof of the main result is a Henkin construction using formulas of complexity $\E_\alpha$. We will discuss briefly in Section \ref{sec:henkin} the role that $\E_\alpha$ formulas play in that construction. Finally, in Section \ref{sec:omittingtypes}, we give a Henkin construction using $\E_\alpha$ formulas to give an improved version of Montalb\'an's type omitting theorem for infinitary logic \cite{MonSR}. This type omitting theorem was originally due to Gonzalez but was unpublished; we give a new and easier proof here making use of the $\A_\alpha/\E_\alpha$ formulas.
	
	\begin{theorem}\label{thmomitting1}
		Let $\mc{M}$ be a countable structure. Let $(\Gamma_i)_{i \in \omega}$ be a list of $\Pi_{\alpha}$-types which are not $\Sigma_{\alpha}$-supported in $\mc{M}$. Let $\varphi$ be a $\Pi_{\alpha+1}$ sentence true in $\mc{M}$. Then there is $\mc{N} \leq_\alpha \mc{M}$ such that $\mc{N} \models \varphi$ and $\mc{N}$ omits all of the $\Gamma_i$.
	\end{theorem}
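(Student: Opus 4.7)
The plan is to build $\mc{N}$ by a Henkin construction in an expanded language with fresh constants $c_0, c_1, \ldots$, translating the back-and-forth requirement $\mc{N} \leq_\alpha \mc{M}$ via Proposition \ref{thm:bnfFormulas} into the requirement that $\mc{N}$ satisfy the single $\vwE_\alpha$ sentence $\Phi := \varphi_{\emptyset,\mc{M},\alpha}$. At each stage $s$ I would maintain a finite atomic diagram $D_s$ on the constants introduced so far, an $\E_\alpha$-formula $\chi_s(\bar{x})$ that records all commitments made, and a witnessing tuple $\bar{a}_s \in \mc{M}$ with $\mc{M} \models \chi_s(\bar{a}_s)$. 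The formula $\chi_s$ packages the positive and negative atomic literals in $D_s$, the $\Sigma_\alpha$-clauses $\sigma_i(\bar c)$ forced by each $\Pi_{\alpha+1}$-conjunct $\forall \bar{y}\, \sigma_i(\bar{y})$ of $\varphi$ relative to each tuple $\bar{c}$ already introduced, and the negations $\neg \gamma(\bar{c})$ used at earlier stages to omit a type.

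I would then dovetail the usual families of requirements. First, for each constant and each quantifier-free formula, decide atomic facts. Second, I descend into the inductive structure of $\Phi$, choosing disjuncts and supplying fresh Henkin constants to witness existential quantifiers at each opportunity. Third, for each Henkin tuple $\bar{c}$ and each $\Pi_{\alpha+1}$-conjunct $\forall \bar{y}\, \sigma_i(\bar{y})$ of $\varphi$, I add $\sigma_i(\bar{c})$ as a new $\Sigma_\alpha$-clause of $\chi_s$, which remains satisfied in $\mc{M}$ because $\mc{M} \models \varphi$. Fourth and most delicate, for each pair $(\bar{c}, \Gamma_i)$, I find some $\gamma(\bar{y}) \in \Gamma_i$ with $\chi_s \wedge \neg \gamma(\bar{c})$ still satisfiable in $\mc{M}$ and pass to this extension. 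The point of working with the $\E_\alpha/\A_\alpha$-hierarchy rather than the plain $\Sigma_\alpha/\Pi_\alpha$-hierarchy is precisely that $\E_\alpha$ is closed under the operations these steps require while still being captured by the back-and-forth machinery of Propositions \ref{thm:bnfFormulas} and \ref{prop:transfer-over-bf}.

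The main obstacle is the omitting step. If for some pair $(\bar{c}, \Gamma_i)$ no such $\gamma$ existed, then every tuple in $\mc{M}$ arising as the $\bar{c}$-coordinates of a realization of $\chi_s$ would satisfy every formula of $\Gamma_i$; extracting from the $\E_\alpha$-formula $\chi_s$ an honest $\Sigma_\alpha$-formula in the $\bar{c}$-variables by existentially quantifying out the remaining variables should then yield a $\Sigma_\alpha$-support for $\Gamma_i$ in $\mc{M}$, contradicting the hypothesis. The technical heart of the proof is verifying that this extraction always produces a valid $\Sigma_\alpha$-support, i.e., that the existential projection of an $\E_\alpha$-formula is equivalent (from the perspective of supporting types) to a genuine $\Sigma_\alpha$-formula; this is exactly the point at which the $\E_\alpha/\A_\alpha$ framework must interact cleanly with the notion of $\Sigma_\alpha$-support. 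Once this is in place, the structure $\mc{N}$ read off from the union of the diagrams satisfies $\Phi$, hence $\mc{N} \leq_\alpha \mc{M}$ by Proposition \ref{thm:bnfFormulas}; it satisfies $\varphi$ by construction, and omits every $\Gamma_i$ by the last family of requirements.
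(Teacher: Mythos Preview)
Your proposal is correct and follows essentially the same route as the paper. The paper packages the argument slightly more modularly: it first isolates your ``technical heart'' as a standalone lemma (Lemma~\ref{lem:internalSigma}), showing that within any fixed countable $\mc{M}$ every $\E_\alpha$ formula is equivalent to a $\Sigma_\alpha$ formula, then proves a general $\forall\E_\alpha$ type-omitting theorem (Theorem~\ref{thm:typeomit2}) by exactly the Henkin construction you outline, and finally derives Theorem~\ref{thmomitting1} by observing that both the given $\Pi_{\alpha+1}$ sentence $\varphi$ and the sentence $\varphi_{\mc{M},\alpha}$ encoding $\mc{N}\leq_\alpha\mc{M}$ are $\forall\E_\alpha$.
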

	
	\noindent This adds the additional condition that $\mc{N} \leq_\alpha \mc{M}$ to Montalb\'an's type omitting theorem. In particular, we maintain the entire $\Sigma_\alpha$ theory of $\mc{M}$. It is still open whether we can have $\mc{N} \geq_\alpha \mc{M}$ or even stronger $\mc{N} \equiv_\alpha \mc{M}$.
	
	In Section \ref{sec:omittingtypes}, we state several consequences of this type-omitting theorem such as the following.
	
	\begin{restatable}{corollary}{cor}
		Let $\mc{M}$ be a countable structure.
		\begin{enumerate}
			\item If, for all countable $\mc{N}$,
			\[ \mc{M} \leq_\alpha \mc{N} \Longrightarrow \mc{M} \cong \mc{N} \]
			then $\mc{M}$ has a $\Pi_{\alpha + 2}$ Scott sentence.
			\item If, for all countable $\mc{N}$,
			\[ \mc{N} \leq_\alpha \mc{M} \Longrightarrow \mc{M} \cong \mc{N} \]
			then $\mc{M}$ has a $\Pi_{\alpha + 1}$ Scott sentence.
		\end{enumerate}
	\end{restatable}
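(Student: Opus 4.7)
For part (1), I would use the $\A_\alpha$-sentence $\psi_{\mc{M},\alpha}$ from Proposition \ref{thm:bnfFormulas}, which characterizes $\mc{M} \leq_\alpha \mc{N}$; the hypothesis of (1) immediately makes $\psi_{\mc{M},\alpha}$ a Scott sentence for $\mc{M}$. Theorem \ref{thm:main-complexity}(1) bounds its model class in $\bfPi^0_{\alpha+2}$, and by the Lopez--Escobar theorem at level $\bfPi^0_{\alpha+2}$ (any isomorphism-invariant $\bfPi^0_{\alpha+2}$ class of countable structures is $\Pi_{\alpha+2}$-definable), we obtain an equivalent $\Pi_{\alpha+2}$ Scott sentence.

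For part (2), the generic $\bfPi^0_{\alpha+3}$ bound from Theorem \ref{thm:main-complexity}(2) is too weak, so I would instead use Theorem \ref{thmomitting1}. The first step is to show by contradiction that every $\Pi_\alpha$-type realized in $\mc{M}$ is $\Sigma_\alpha$-supported: if some realized type $p$ were not $\Sigma_\alpha$-supported, Theorem \ref{thmomitting1} applied with $\varphi = \top$ would produce a countable $\mc{N} \leq_\alpha \mc{M}$ omitting $p$, contradicting the hypothesis (since $\mc{M}$ realizes $p$). The second step is to build the Scott sentence from these supports $\psi_p$: following Montalb\'an's strategy for structures with $\Sigma_\alpha$-definable orbits, the Scott sentence is a $\Pi_{\alpha+1}$ conjunction of $\Sigma_\alpha$-existence clauses $\exists \bar{x}\, \psi_p(\bar{x})$ asserting that each support is realized, together with $\Pi_{\alpha+1}$-universal clauses of the form $\forall \bar{x}\, \bigvvee_p \psi_p(\bar{x})$ ensuring that every tuple in a model is captured by some support. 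Any countable model $\mc{N}$ of this sentence then satisfies $\mc{N} \leq_\alpha \mc{M}$, hence $\mc{N} \cong \mc{M}$ by the hypothesis.

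The main obstacle is the second step of part (2): carefully packaging the supports into a $\Pi_{\alpha+1}$ Scott sentence requires a Scott-rank-style analysis paralleling Montalb\'an's treatment of $\Sigma_\alpha$-definable orbits, in which the supports from the first step provide the essential technical ingredient for bringing the Scott-sentence complexity down from the generic $\Pi_{\alpha+3}$ to the tighter $\Pi_{\alpha+1}$.
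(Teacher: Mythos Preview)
Your approach is essentially the same as the paper's. For (1), the paper is slightly more direct: Lemma~\ref{lem:aboveGeneral} already produces an explicit $\Pi_{\alpha+2}$ sentence defining $\{\mc{N} : \mc{M} \leq_\alpha \mc{N}\}$, so there is no need to pass through Lopez--Escobar. For (2), the paper also uses Theorem~\ref{thmomitting1} to conclude that every realized $\Pi_\alpha$-type is $\Sigma_\alpha$-supported (applying it once to omit all non-supported types simultaneously rather than one at a time, but your version works equally well), and then simply invokes Montalb\'an's characterization (every realized $\Pi_\alpha$-type is $\Sigma_\alpha$-supported $\Longleftrightarrow$ $\mc{M}$ has a $\Pi_{\alpha+1}$ Scott sentence) as a black box.

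One caution about your sketch of the second step in (2): the specific sentence you write down (existence clauses $\exists\bar{x}\,\psi_p(\bar{x})$ plus coverage clauses $\forall\bar{x}\,\bigvvee_p \psi_p(\bar{x})$) does \emph{not} by itself force $\mc{N} \leq_\alpha \mc{M}$, because $\psi_p$ supports the type $p$ only in $\mc{M}$; in an arbitrary $\mc{N}$ a witness to $\psi_p$ need not satisfy the $\Pi_{<\alpha}$ formulas in $p$. So the plan of re-using the hypothesis at the end does not go through as written. The correct route---which you allude to when you invoke ``Montalb\'an's treatment''---is to add the extension clauses $\forall\bar{x}\,(\psi_p(\bar{x}) \to \bigwwedge_{\bar{c}}\exists\bar{y}\,\psi_{p'}(\bar{x},\bar{y}))$ and show directly that the supports define a back-and-forth system; the hypothesis is not used again. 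This is exactly what the paper cites, so your identification of the key ingredient is right even though the packaging you describe is off.
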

	
	Item (1) simply follows from our general characterization of $\{\mc{N} : \mc{M} \leq_\alpha \mc{N}\}$ as a $\bfPi^0_{\alpha + 2}$ set for any $\alpha$. For (2), such an argument would yield a $\Pi_{\alpha+3}$ Scott sentence. Using the type omitting theorem, we improve this to a $\Pi_{\alpha+1}$ Scott sentence, and this cannot be further improved; for example, take $\mc{M}$ to be the infinite-dimensional $\mathbb{Q}$-vector space. This has a $\Pi_3$ Scott sentence and no simpler, but if $\mc{V} \leq_2 \mathbb{Q}^{\mathbb{N}}$ then $\mc{V} \cong \mathbb{Q}^{\mathbb{N}}$. Whether (1) is optimal remains open.
	
	\begin{restatable}{question}{optimalques}\label{q:sr}
		Let $\mc{M}$ be a countable structure. Suppose that for all countable $\mc{N}$
		\[\mc{M} \leq_\alpha \mc{N} \Longrightarrow \mc{M} \cong \mc{N}.\]
		Must $\mc{M}$ have a $\Pi_{\alpha+1}$ Scott sentence? A $\Pi_{\alpha}$ Scott sentence?
	\end{restatable}
	
	\section{The definability of the back-and-forth relations in the $\Sigma$ and $\Pi$ hierarchies}
	
	We begin by demonstrating the lower bound of $\bfPi^0_{2n}$ in the symmetrical case of the theorem for finite values.
	In other words, we demonstrate that the set $\{(\mc{M},\mc{N}) : \mc{M} \leq_n \mc{M} \}$ is sometimes as difficult to define as possible.
	Note that $\bfPi^0_{2n}$ matches the classical upper bound, as the definition of the back-and-forth game gives a $\bfPi^0_{2n}$ description of the back-and-forth relations.
	(We expand on this point regarding the classical upper bound in the proof below.)

	\begin{lemma}\label{lem:uniform}
		For any language $\mc{L}$ containing a relation symbol of arity $\geq 2$, the set
		\[ \{(\mc{M},\mc{N}) : \mc{M} \leq_n \mc{N} \}\]
		is $\bfPi^0_{2n}$-complete in $\Mod(\mc{L}) \times \Mod(\mc{L})$.
	\end{lemma}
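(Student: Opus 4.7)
The upper bound $\bfPi^0_{2n}$ is the classical one, obtained by a direct induction on $n$. The base relation $\leq_0$ is clopen, depending only on finitely many atomic formulas applied to the tuples. The recursive clause
\[ (\mc{M},\bar{a}) \leq_{k+1} (\mc{N},\bar{b}) \iff \forall \bar{d} \in \mc{N}\ \exists \bar{c} \in \mc{M}\ (\mc{N},\bar{b}\bar{d}) \leq_{k} (\mc{M},\bar{a}\bar{c}) \]
contributes two number-quantifier alternations per round (a finite tuple from a countable structure being coded by a single natural), so the complexity moves from $\bfPi^0_{2k}$ to $\bfPi^0_{2k+2}$.

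For the lower bound my plan is a continuous reduction from a canonical $\bfPi^0_{2n}$-complete set
\[ C = \{ x \in \omega^\omega : \forall k_1\ \exists m_1\ \cdots\ \forall k_n\ \exists m_n\ \phi(x, k_1,m_1,\ldots,k_n,m_n)\}, \]
with $\phi$ a suitable clopen predicate (chosen so that $C$ is universal $\bfPi^0_{2n}$). Unfolding Definition \ref{def:bfasym} shows that $\mc{M} \leq_n \mc{N}$ is itself a $2n$-alternation game with Spoiler owning the universal moves and Duplicator the existential ones, so the goal is to package the game tree of $C$ into the back-and-forth game. I will take $\mc{M}_x$ and $\mc{N}_x$ to be rigid labeled trees of depth $2n$, branching $\omega$-fold at each level, realized in a language with a single binary relation (enough by the footnote). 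The binary relation encodes both parent-child edges and a level-marker strong enough that every element's depth is recoverable from its atomic type; the skeletons of the two structures are identical and do not depend on $x$. They differ only in leaf-labels: a depth-$2n$ leaf indexed by $(k_1,m_1,\ldots,k_n,m_n)$ carries a distinguishing atomic flag in $\mc{N}_x$ iff $\phi(x,\vec{k},\vec{m})$, while in $\mc{M}_x$ every leaf is flagged. In the intended line of play, each round of the back-and-forth game advances the joint play one layer deeper into the tree, so after $n$ rounds the players have together named a leaf; the final $\leq_0$ check then succeeds iff $\mc{N}_x$'s leaf is flagged, i.e., iff $x \in C$. Continuity of $x \mapsto (\mc{M}_x, \mc{N}_x)$ is immediate since only the leaf flags depend on $x$.

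The main obstacle is verifying that the rigidity of the skeleton genuinely forces both players into this round-by-round descent, even though the back-and-forth rules allow them to play tuples of arbitrary size. Extra elements Spoiler plays from already-exposed layers can be mirrored by Duplicator via the shared skeleton; a fresh element Spoiler plays at a new depth is pinned down by the level-marker and branch labels, so Duplicator is forced to produce a matching fresh element whose branch choice records the intended quantifier move. Designing the atomic-type bookkeeping so that these ``mirror or commit'' options exhaust all plays is the real content of the reduction. Once it is in place, a routine induction on $n$ yields $\mc{M}_x \leq_n \mc{N}_x \iff x \in C$, completing the proof of $\bfPi^0_{2n}$-completeness.
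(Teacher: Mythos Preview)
Your upper bound is fine. The lower-bound construction, however, does not work as stated, and the obstacle you flag is fatal rather than merely technical. With identical skeletons and all leaves in $\mc{M}_x$ flagged, nothing prevents Spoiler from playing an entire root-to-leaf path of $\mc{N}_x$ on the very first move. If any leaf of $\mc{N}_x$ is unflagged, Duplicator cannot match that atomic type in $\mc{M}_x$, so already $\mc{M}_x \not\leq_1 \mc{N}_x$; conversely if every leaf of $\mc{N}_x$ is flagged then $\mc{M}_x \cong \mc{N}_x$. Thus your reduction yields $\mc{M}_x \leq_n \mc{N}_x \iff \forall k_1\,\forall m_1\cdots\forall k_n\,\forall m_n\,\phi(x,\bar k,\bar m)$, a $\bfPi^0_1$ condition, not $\bfPi^0_{2n}$. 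The hoped-for ``rigidity forces round-by-round descent'' is exactly what fails when Spoiler may play tuples: a rigid shared skeleton lets Spoiler jump straight to the leaves.

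The paper's construction avoids this by \emph{not} giving the two structures the same skeleton. It builds nested equivalence relations, with each class at the outermost level carrying a structure from the previous inductive step; crucially, $\mc{B}^{x,V}_i$ contains \emph{extra} equivalence classes (copies of a distinguished structure $\mc{A}^{x,U_{i,m}}$) that $\mc{A}^{x,V}$ lacks, while every class of $\mc{A}^{x,V}$ is also present in $\mc{B}^{x,V}_i$. In the game for $\mc{A}^{x,V} \geq_{n+1} \mc{B}^{x,V}_i$, Spoiler's first move lands in $\mc{A}^{x,V}$, where Duplicator can mirror perfectly; only on Spoiler's \emph{second} move (now in $\mc{B}^{x,V}_i$) can the distinguishing classes be touched, and those classes themselves recursively encode the next two quantifiers. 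The combinatorial core is a sublemma showing that for such disjoint-union-with-infinite-multiplicity structures, $\mc{A} \geq_n \mc{B}$ follows from ``every $\mc{A}_i$ is $\geq_n$ some $\mc{B}_j$'' together with ``every $\mc{B}_j$ is $\geq_{n-1}$ some $\mc{A}_i$''; the asymmetry between $n$ and $n-1$ is precisely what packs two arithmetical quantifiers into one back-and-forth round. Your tree idea could perhaps be repaired along these lines, but it would amount to rediscovering this mechanism.
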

	\begin{proof}
		Any language containing a relation symbol of arity $\geq 2$ can simulate, via effective bi-interpretation, any other language. Thus, in showing completeness we can work with whichever language is most useful.
		
		The fact that this set is $\bfPi^0_{2n}$ has been observed previously (see, e.g., Lemma VI.14 in \cite{MonBook}).
		To be a bit more explicit, the $\bfPi^0_{2n}$ definition follows directly from the definition of the back-and-forth relations.
		In the product space, $\Mod(\mc{L}) \times \Mod(\mc{L})$, a universal quantifier over the elements of the first coordinate (or second coordinate) naturally corresponds to a countable intersection of sets.
		Dually, an existential quantifier naturally corresponds to a countable union.
		The definition of $\leq_n$ alternates quantifiers (some over the first structure, some over the second structure) $2n$-times and begins with a universal quantifier, so it gives a $\bfPi^0_{2n}$ definition.
		
		We focus the rest of the proof on showing that the set is $\bfPi^0_{2n}$-hard.
		We will give an inductive construction but we need a stronger statement to carry out the induction. For $n \geq 2$, given a $\Pi^0_{2n-2}$ set $U \subseteq 2^\omega \times \omega$ and $x \in 2^\omega$ we will construct structures $\mc{A}^{x,U}$ and $\mc{B}^{x,U}_i$ such that
		\begin{enumerate}
			\item if $(x,i) \in U$ then $\mc{A}^{x,U} \geq_n \mc{B}^{x,U}_{i}$, and
			\item if $(x,i) \notin U$ then $\mc{B}^{x,U}_{i} \ngeq_{n-1} \mc{A}^{x,U}$.
		\end{enumerate}
		For a given value of $n$, the language of these structures will consist of equivalence relations $\mathbb{E}_{n},\ldots,\mathbb{E}_3$ and unary relations $R^{n'}_{i,m}$ for $2 \leq n' \leq n$ and $i,m \in \omega$. The equivalence relation $\mathbb{E}_{i}$ will refine $\mathbb{E}_j$ if $i < j$. Inductively, in constructing these structures for $n+1$, we may assume that we know how to construct them for $n$.
		
		Our base case is $n = 2$.
		We assume that we work relative to an oracle that makes a given $\bfPi^0_{2}$ set $\Pi^0_{2}$ and note that our proof relativizes; we make a similar assumption as needed throughout this proof.
		Given a $\Pi^0_{2}$ set $V \subseteq 2^\omega \times \omega$, $x \in 2^\omega$, and $k \in \omega$ we must construct structures $\mc{A}^{x,V}$ and $\mc{B}^{x,V}_i$ with the desired properties. Write
		\[ (x,i) \in V \Longleftrightarrow \forall m \exists j \; (x,i,m,j) \in U,\]
		where $U$ is computable.
		We will have unary relations $R^2_{i,m}$. 
		Let $\mc{A}^{x,V}$ have infinitely many elements $a^s_{i',m,j}$.
		The elements $a^s_{i',m,j}$ will behave the same for fixed values of $i',m,j$.
		In other words, each element is infinitely replicated by the index $s$.
		Put $a^s_{i',m,j} \in R^2_{i',m}$ if $(x,i',m,j) \in U$. Let $\mc{B}^{x,V}_i$ have infinitely many elements $b^s_{i',m,j}$ as well as elements $b_m^*$.
		Put $b^s_{i',m,j} \in R^2_{i',m}$ if $(x,i',m,j) \in U$ and put $b_m^* \in R^2_{i,m}$.
		Informally, $\mc{B}^{x,V}_i$ is just the same as $\mc{A}^{x,V}$ except for the fact that there is an additional element added to $R^2_{i,m}$ for each $m$.
		Then:
		\begin{itemize}
			\item If $(x,i) \in V$, then we can check that $\mc{A}^{x,V} \cong \mc{B}^{x,V}_{i}$ and hence $\mc{A}^{x,V} \geq_2 \mc{B}^{x,V}_{i}$. We see that $\mc{A}^{x,V}$ embeds into  $\mc{B}^{x,V}_{i}$. The elements not in the image are the $b_m^*$. For each $m$ there is $j_m$ such that $(x,i,m,j_m) \in U$ and so $b_m^*$ satisfies the same relations as each $a^s_{i,m,j_m}$ (for each $s$). Thus we can map, say $b_m^* \leftrightarrow a^0_{i,m,j_m}$ and $b^s_{i,m,j} \leftrightarrow a^{s+1}_{i,m,j}$.
			\item If $(x,i) \notin V$ then there is $m$ such that for all $j$ we have $(x,i,m,j) \notin U$. Note that in $\mc{B}^{x,V}_{i}$ there is always an element $b_m^*$ satisfying $R^2_{i,m}$.
			In contrast, in $\mc{A}^{x,V}$, any $a_{i',m',j}$ satisfying $R^2_{i,m}$ must have $i' = i$ and $m' = m$ and $(x,i,m,j) \in U$.
			This cannot happen by the choice of $i$ and $m$, and so we have an existential formula true in $\mc{B}^{x,V}_{i}$ which is not true in $\mc{A}^{x,V}$. Thus $\mc{B}^{x,V}_{i} \ngeq_1 \mc{A}^{x,V}$.
		\end{itemize}
		This finishes the case of $n = 2$.
		
		Now, suppose that we have a construction for $n$, and we will give a construction for $n+1$. Let $V$ be a $\Pi^0_{2n}$ set, say
		\[ x \in V \Longleftrightarrow \forall m \exists j \; (x,i,m,j) \in U \]
		where $U$ is $\Pi^0_{2n-2}$. For each $i,m$, let
		\[ U_{i,m} = \{ (x,j) \; : \; (x,i,m,j) \in U\}.\]
		By the inductive hypothesis, we have structures $\mc{A}^{x,U_{i,m}}$ and $\mc{B}^{x,U_{i,m}}_j$ such that
		\begin{enumerate}
			\item  if $(x,j) \in U_{i,m}$ then $\mc{A}^{x,U_{i,m}} \geq_{n} \mc{B}^{x,U_{i,m}}_{j}$, and
			\item if $(x,j) \notin U_{i,m}$ then  $\mc{B}^{x,U_{i,m}}_{j} \ngeq_{n-1} \mc{A}^{x,U_{i,m}}$.
		\end{enumerate}
		To be explicit, the above definitions and hypotheses immediately give that 
		\[(x,i)\in V \iff \forall m\exists j ~(x,j)\in U_{i,m}.\]
		Therefore,
		\[(x,i)\in V\implies \forall m\exists j ~ \mc{A}^{x,U_{i,m}} \geq_{n} \mc{B}^{x,U_{i,m}}_{j} \text{  and  } (x,i)\not\in V\implies \exists m\forall j ~ \mc{A}^{x,U_{i,m}} \nleq_{n-1} \mc{B}^{x,U_{i,m}}_{j}.\]
		
		Define $\mc{A}^{x,V}$ and $\mc{B}^{x,V}_i$ as follows:
		\begin{itemize}
			\item $\mc{A}^{x,V}$ is split into sorts indexed by $i,m$ by the relations $R^{n+1}_{i,m}$.
			Each sort is, in turn, split into equivalence classes with respect to the equivalence relation $\mathbb{E}_{n+1}$ with each equivalence class being the domain of one of the structures from the previous inductive case.
			The $(i,m)$ sort has infinitely many equivalence classes: For each  $j\in\omega$ there are infinitely many classes containing a copy of $\mc{B}^{x,U_{i,m}}_j$ for each $j\in\omega$, and there are no other equivalence classes.
			\item $\mc{B}^{x,V}_i$ is similarly split into sorts indexed by $i,m$ by the relations $R^{n+1}_{i,m}$.
			Again, each sort is split into equivalence classes with respect to the equivalence relation $\mathbb{E}_{n+1}$, and each equivalence class is the domain of a structure.
			The $(i',m)$ sort has infinitely many equivalence classes of the following types: For each $j \in \omega$ there are infinitely many equivalence classes containing a copy of $\mc{B}^{x,U_{i',m}}_j$, and if $i'=i$, there are also infinitely many equivalence classes containing a copy of $\mc{A}^{x,U_{i,m}}$; there are no other equivalence classes.
		\end{itemize}
		Informally, $\mc{B}^{x,V}_i$ is just the same as $\mc{A}^{x,V}$ except for the fact that there are infinitely many additional copies of $\mc{A}^{x,U_{i,m}}$ added to $R^{n+1}_{i,m}$ for each $m$.
		We now demonstrate that the $\mc{A}^{x,V}$ and $\mc{B}^{x,V}_i$ have Properties (1) and (2) required to continue the induction.
		
		\begin{sublemma}\label{sublemma:complicated}
			Let $\mc{A}$ and $\mc{B}$ be structures in the relational language $\mc{L} \cup \{\mathbb{E}\}$ which consist of an equivalence relation $\mathbb{E}$ with infinitely many equivalence classes, such that each equivalence class is an $\mc{L}$-structure (and no relations from $\mc{L}$ hold between different equivalence classes). Write $\mc{A} = \bigsqcup_i \mc{A}_i$ and $\mc{B} = \bigsqcup_i \mc{B}_i$ as disjoint unions of equivalence classes, with the $\mc{A}_i$ and $\mc{B}_i$ being $\mc{L}$-structures. Suppose furthermore that each isomorphism type that appears does so infinitely many times. If the following conditions hold, then $(\mc{A},\bar{a}) \geq_{n} (\mc{B},\bar{b})$:
			\begin{enumerate}[label=(\Alph*)]
				\item For each $k,\ell$, $a_k \mathbb{E} a_\ell$ if and only if $b_k \mathbb{E} b_\ell$.
				\item Given \textup{(A)}, we may reorder $\bar{a}$ and $\bar{b}$ into subtuples $\bar{a}_1,\ldots,\bar{a}_\ell$ and $\bar{b}_1,\ldots,\bar{b}_\ell$ such that there are $i_1,\ldots,i_\ell$ and $j_1,\ldots,j_\ell$ with $\bar{a}_k \in \mc{A}_{i_k}$ and $\bar{b}_k \in \mc{B}_{j_k}$. Then for $k = 1,\ldots,\ell$,
				\[ (\mc{A}_{i_k},\bar{a}_k) \geq_n (\mc{B}_{i_k},\bar{b}_k).\]
				\item For each $i$ there is $j$ such that $\mc{A}_i \geq_n \mc{B}_j$.
				\item For each $j$ there is $i$ such that $\mc{B}_j \geq_{n-1} \mc{A}_i$.
			\end{enumerate}
		\end{sublemma}
		\begin{proof}
			We argue inductively on $n$, starting with the base case of $n = 0$. This case is immediate. 
			Supposing that (A), (B), and (C) hold, we see that these determine for each type of atomic formula that it is true of $\bar{a}$ in $\mc{A}$ if and only if it is true of $\bar{b}$ in $\mc{B}$.
			
			Now we inductively assume we know the sublemma for $n$, and prove it for $n+1$. We argue that if (A), (B), (C), and (D) hold for $n+1$, then $(\mc{A},\bar{a}) \geq_{n+1} (\mc{B},\bar{b})$. Suppose that we are given $\bar{a}' \in \mc{A}$. First, since (A) holds, we may break up $\bar{a}$ and $\bar{b}$ as in (B) into tuples $\bar{a}_1,\ldots,\bar{a}_\ell$ and $\bar{b}_1,\ldots,\bar{b}_\ell$. We may break $\bar{a}'$ up into tuples $\bar{a}_1',\ldots,\bar{a}_\ell'$ such that $\bar{a}_k' \in \mc{A}_{i_k}$, and further elements $\bar{a}_1,\ldots,\bar{a}_s^*$ such that $\bar{a}_t^* \in \mc{A}_{i_t^*}$. By (B), for each $k = 1,\ldots,\ell$ we have $(\mc{A}_{i_k},\bar{a}_k) \geq_{n+1} (\mc{B}_{i_k},\bar{b}_k)$ and so we may pick $\bar{b}_k' \in \mc{B}_{i_k}$ such that $(\mc{B}_{i_k},\bar{b}_k \bar{b}_k')  \geq_{n}  (\mc{A}_{i_k},\bar{a}_k \bar{a}_k')$. By (C), for each $t = 1,\ldots,s$ we may choose $j_t^*$ and $\bar{b} \in \mc{B}_{j_t^*}$ such that $(\mc{B}_{j_t^*},\bar{b}_t^*) \geq_{n} (\mc{A}_{i_t^*},\bar{a}_t^*)$. Furthermore, as each isomorphism type appears infinitely many times, we may choose each of the $j_t^*$ to be an index that we have not yet chosen. Our goal is to argue that 
			\[ (\mc{B},\bar{b}_1\bar{b}_1',\ldots,\bar{b}_k\bar{b}_k',\bar{b}_1^*,\ldots,\bar{b}_s^*) \geq_n (\mc{A},\bar{a}_1\bar{a}_1',\ldots,\bar{a}_k\bar{a}_k',\bar{a}_1^*,\ldots,\bar{a}_s^*)\]
			from which we can conclude that
			\[ (\mc{A},\bar{a}) = (\mc{A},\bar{a}_1,\ldots,\bar{a}_k) \geq_{n+1} (\mc{B},\bar{b}_1,\ldots,\bar{b}_k) = (\mc{B},\bar{b}).\]
			To show the former, we will use the inductive hypothesis. (A) and (B) follow by choice of the $\bar{b}_i'$ and $\bar{b}_i^*$. (C) for $n$ is exactly (D) for $n+1$, and (D) follows from (C) as $\mc{A}_i \geq_{n+1} \mc{B}_j$ implies $\mc{A}_i \leq_{n} \mc{B}_j$.
		\end{proof}
		
		\begin{claim}\label{claim:(1)}
			If $(x,i) \in V$ then $\mc{A}^{x,V} \geq_{n+1} \mc{B}^{x,V}_i$.
		\end{claim}
		\begin{proof}
			$\mc{A}^{x,V}$ and $\mc{B}^{x,V}_i$ are of the form specified by the previous claim.
			Furthermore, each sort indexed by $i',m$ is of the specified form, so we may check that the conditions of the previous claim are satisfied sort by sort.
			If $i'\neq i$, the $(i',m)$\textsuperscript{th} sort of $\mc{A}^{x,V}$ and $\mc{B}^{x,V}_i$ are isomorphic by construction, so there is nothing non-trivial to check.
			Consider instead when $i'=i$.
			(A) and (B) are vacuously true since there are no tuples.
			(C) is due to the fact that each structure $\mc{B}_j^{x,U_{i,m}}$ appearing as an equivalence class in $\mc{A}^{x,V}$ also appears as an equivalence class in $\mc{B}_i^{x,V}$. 			(D) is verified as follows.
			Within the $(i,m)$\textsuperscript{th} sort, the structures appearing as equivalence classes in $\mc{B}_i^{x,V}$ are the $\mc{B}_j^{x,U_{i,m}}$, which are also equivalence classes of $\mc{A}^{x,V}$, and also the $\mc{A}^{x,U_{i,m}}$.
			For each $m$, since $(x,i) \in V$, there is $j$ such that $(x,i,m,j) \in U$, so that $(x,j) \in U_{i,m}$.
			Thus, for each $m$ there is $j$ such that $\mc{A}^{x,U_{i,m}} \geq_{n} \mc{B}^{x,U_{i,m}}_{j}$.
			Since we have checked (A), (B), (C), and (D) of the previous claim, we conclude that $\mc{A}^{x,V} \geq_{n+1} \mc{B}^{x, V}_i$.
		\end{proof}
		
		\begin{sublemma}\label{sublemma:complicated2}
			Let $\mc{A}$ and $\mc{B}$ be structures in the relational language $\mc{L} \cup \{\mathbb{E}\}$ which consist of an equivalence relation $\mathbb{E}$ with infinitely many equivalence classes, such that each equivalence class is an $\mc{L}$-structure (and no relations from $\mc{L}$ hold between different equivalence classes). Write $\mc{A} = \bigsqcup_i \mc{A}_i$ and $\mc{B} = \bigsqcup_i \mc{B}_i$ as disjoint unions of equivalence classes, with the $\mc{A}_i$ and $\mc{B}_i$ being $\mc{L}$-structures. If there is some $\mc{B}_j$ such that for every $\mc{A}_i$ we have $\mc{A}_i \ngeq_{n-1} \mc{B}_j $, then $\mc{B} \ngeq_{n} \mc{A}$. 
		\end{sublemma}
		\begin{proof}
			Suppose to the contrary that  $\mc{B} \geq_{n} \mc{A}$. Choose $\bar{b} \in \mc{B}_j$. There is $\bar{a} \in \mc{A}$ such that $(\mc{A},\bar{a}) \geq_{n-1} (\mc{B},\bar{b})$. But, as $\mc{A} \ngeq_{n-1} \mc{B}$, this yields a contradiction.
		\end{proof}
		
		\begin{claim}\label{claim:(2)}
			If $(x,i) \notin V$ then $\mc{B}^{x,V}_i \ngeq_n \mc{A}^{x,V}$.
		\end{claim}
		\begin{proof}
			Since $(x,i) \notin V$, there is $m$ such that for all $j$ we have $(x,i,m,j) \notin U$ and so $(x,j) \notin U_{i,m}$. Thus, there is $m$ such that for all $j$ we have $\mc{B}^{x,U_{i,m}}_{j} \ngeq_{n-1} \mc{A}^{x,U_{i,m}}$. 
			For this $m$, we will show that Sublemma \ref{sublemma:complicated2} applies when focusing on the $(i,m)$\textsuperscript{th} sort of $\mc{B}^{x,V}_i$ and $ \mc{A}^{x,V}$
			
			$\mc{A}^{x,U_{i,m}}$ is one of the equivalence classes of $\mc{B}^{x,V}_i$ for this sort.
			The equivalence classes of $\mc{A}^{x,V}$ for this sort are exactly the structures $\mc{B}^{x,U_{i,m}}_j$. 
			Note that for all $j$ we have $(x,j) \notin U_{i,m}$ and so by (2),  $\mc{B}^{x,U_{i,m}}_j \ngeq_{n-1} \mc{A}^{x,U_{i,m}}$.
			Thus, we can apply Sublemma \ref{sublemma:complicated2} to conclude that $\mc{B}^{x,V}_i \ngeq_n \mc{A}^{x,V}$ on the sort indexed by $i,m$ and indeed $\mc{B}^{x,V}_i \ngeq_n \mc{A}^{x,V}$ follows at once.
		\end{proof}

		Claims \ref{claim:(1)}, and \ref{claim:(2)} finish the inductive argument.
		We now argue for the full statement of the lemma.
		Consider a $\bfPi^0_{2m}$ set $V\subseteq 2^\omega$.
		Note that $U:=V\times \{0\}$ is a $\Pi_{2m}$ subset of $2^\omega\times\omega$.
		The above inductive construction (with parameter $n=m+1$) gives us structures $\mc{M}^x = \mc{A}^{x,U}$ and $\mc{N}^x = \mc{B}^{x,U}_0$  such that
		\[x\in V \iff (x,0)\in U \iff \mc{A}^{x,U} \leq_m \mc{B}^{x,U}_0  \iff \mc{M}^x \leq_m \mc{N}^x.\]
		This gives the desired effective reduction from $V$ to the sets of pairs of structures $(\mc{M},\mc{N})$ related by $\mc{M} \leq_m \mc{N}$ and completes the proof of the lemma.
	\end{proof}

	We delay giving a lower bound on the definability of the back-and-forth relations in the infinite case until Section \ref{sec:jump-inv} where we use jump inversions.
	The finite cases contain the critical combinatorics.
	
	\section{Upper bounds for the definability of the $\alpha$-types of structures}
	
	The remaining statements of Theorem \ref{thm:main-complexity} are concerned with the case where we fix a structure $\mc{M}$ and consider whether some other structure $\mc{N}$ satisfies $\mc{N} \geq_\alpha \mc{M}$ or $\mc{N} \leq_\alpha \mc{M}$. 
	In this section, we will prove the needed upper bounds on the complexities of the sets $\{ \mc{N} : \mc{M} \leq_\alpha \mc{N}\}$ and $\{ \mc{N} : \mc{M} \geq_\alpha \mc{N}\}$. We begin with general bounds which hold for most $\alpha$, followed by certain exceptional cases where we can prove better bounds for small $\alpha$ or $\alpha$ near limit ordinals.
	All of these results are optimal; we provide the corresponding proofs of the lower bounds in future sections.
	
	\subsection{General bounds}
	
	The following two lemmas provide general bounds for the $\alpha$-types of structures that are superior to the $\mathbf{\Pi}^0_{2\alpha}$ bounds provided by the previous analysis.

	\begin{lemma}\label{lem:aboveGeneral}
		Let $\mc{M}$ be a fixed countable structure and let $\bar{a} \in \mc{M}$. For each $\alpha < \omega_1$ there is a $\Pi_{\alpha+2}$ formula $\varphi$
		such that $\mc{N} \models \varphi(\bar{b})$ if and only if $(\mc{N},\bar{b}) \geq_\alpha (\mc{M},\bar{a})$. Hence the set
		\[ \{\mc{N} : \mc{M} \leq_\alpha \mc{N}\}\]
		is $\bfPi^0_{\alpha+2}$.
	\end{lemma}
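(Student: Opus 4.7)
The plan is to construct $\varphi$ by transfinite induction on $\alpha$, simultaneously producing, for each $\bar{a}' \in \mc{M}$, both the main formula $\varphi^{\bar{a}'}_\alpha(\bar{y}) \in \Pi_{\alpha+2}$ characterising $(\mc{M},\bar{a}') \leq_\alpha (\mc{N},\bar{b})$ and an auxiliary formula $\psi^{\bar{a}'}_\alpha(\bar{y}) \in \Pi_{\alpha+1}$ characterising the reverse relation $(\mc{N},\bar{b}) \leq_\alpha (\mc{M},\bar{a}')$. The base case $\alpha = 0$ is the finite atomic type of $\bar{a}'$. For the inductive step, I would directly unfold the definition of $\leq_\alpha$, replacing the existential quantifier over $\bar{c} \in \mc{M}$ by an infinitary disjunction (which is valid because $\mc{M}$ is countable), and set
\[
\varphi^{\bar{a}'}_\alpha(\bar{y}) := \bigwwedge_{\beta<\alpha} \forall \bar{y}' \bigvvee_{\bar{c} \in \mc{M}} \psi^{\bar{a}'\bar{c}}_\beta(\bar{y}, \bar{y}').
\]

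Given the target bound $\psi^{\bar{a}'\bar{c}}_\beta \in \Pi_{\beta+1}$, the complexity bookkeeping goes as follows: the disjunction $\bigvvee_{\bar{c}} \psi^{\bar{a}'\bar{c}}_\beta$ lies in $\Sigma_{\beta+2}$; prefixing $\forall \bar{y}'$ lands in $\Pi_{\beta+3}$; and the conjunction over $\beta < \alpha$ then lies in $\Pi_{\alpha+2}$ for successor $\alpha$ (and in $\Pi_\alpha$, a subclass, for limit $\alpha$). Correctness of $\varphi^{\bar{a}'}_\alpha$ follows immediately from the definition of $\leq_\alpha$ together with the defining property of $\psi^{\bar{a}'\bar{c}}_\beta$.

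The main obstacle is producing $\psi^{\bar{a}'}_\alpha$ at the tight complexity $\Pi_{\alpha+1}$: the naive inductive definition $\bigwwedge_{\gamma<\alpha,\bar{c}}\exists \bar{y}'\,\varphi^{\bar{a}'\bar{c}}_\gamma$ only gives the weaker bound $\Pi_{\alpha+3}$, which would in turn push $\varphi^{\bar{a}'}_\alpha$ up to $\Pi_{\alpha+4}$. To get around this, one uses Karp's theorem to recast $(\mc{N},\bar{b}) \leq_\alpha (\mc{M},\bar{a}')$ as the condition that $\bar{b}$ realises every $\Sigma_\alpha$-formula true of $\bar{a}'$ in $\mc{M}$: since a countable conjunction of $\Sigma_\alpha$-formulas is a $\Pi_{\alpha+1}$-formula, it suffices to produce a countable family of $\Sigma_\alpha$-formulas whose conjunction generates the entire $\Sigma_\alpha$-type of $\bar{a}'$. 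A natural candidate is the family $\{\exists \bar{y}'\, \pi^{\bar{a}'\bar{c}}_\beta(\bar{y},\bar{y}') : \beta<\alpha,\ \bar{c}\in\mc{M}\}$, where each $\pi^{\bar{a}'\bar{c}}_\beta$ is a canonically chosen $\Pi_\beta$-formula attached to the tuple $\bar{a}'\bar{c}$. The technical heart of the proof is then verifying that this countable conjunction really does capture the full $\Sigma_\alpha$-type, not merely a fragment of it; this uses countability of $\mc{M}$ essentially, since any witness to an existential $\Sigma_\alpha$-formula true of $\bar{a}'$ must itself live in $\mc{M}$ and is therefore available inside the simultaneous induction.
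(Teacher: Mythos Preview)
Your plan has a genuine gap at its ``technical heart'': the auxiliary formula $\psi^{\bar a'}_\alpha\in\Pi_{\alpha+1}$ characterising $(\mc N,\bar b)\le_\alpha(\mc M,\bar a')$ does not exist in general. Indeed, the paper later proves (Lemma~\ref{lem:3hard} and its jump-inversion extensions) that for $\alpha=\beta+3$ there are structures $\mc M$ for which $\{\mc N:\mc N\le_\alpha\mc M\}$ is $\bfPi^0_{\alpha+3}$-complete, so no $\Pi_{\alpha+1}$ definition is possible. The same obstruction kills your $\pi^{\bar a'\bar c}_\beta$: a $\Pi_\beta$ formula that is true of $\bar a'\bar c$ in $\mc M$ and logically implies every $\Pi_\beta$ formula true of $\bar a'\bar c$ would, by Karp, be a $\Pi_\beta$ sentence defining $\{(\mc N,\bar f):(\mc N,\bar f)\ge_\beta(\mc M,\bar a'\bar c)\}$. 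But the paper shows (Lemma~\ref{lem:2Hard} and its extensions) that this set can be $\bfPi^0_{\beta+2}$-complete. So no such $\pi^{\bar a'\bar c}_\beta$ exists, and the verification you flag as the ``technical heart'' in fact fails. Your countability heuristic is correct as far as it goes---every $\Sigma_\alpha$ witness does live in $\mc M$---but the witness $\bar c$ alone does not determine a $\Pi_\beta$ formula strong enough to imply the arbitrary $\Pi_\beta$ subformula $\rho_j$ it happened to satisfy.

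The paper's proof avoids this by a different device. It uses the $\Pi_\alpha$ formula $\theta_{\bar b}$ from Lemma~II.62 of \cite{MonBook}, which isolates the $\Pi_\alpha$-type of $\bar a\bar b$ \emph{only internally in $\mc M$}, and then adds a separate conjunct $\psi$ asserting, as a $\Pi_{\alpha+2}$ sentence, that in $\mc N$ the implication $\theta_{\bar b}\to\varphi^\alpha_{\mc M,\bar a\bar b}$ holds for every $\bar b$. This lets the cheap internal $\Pi_\alpha$ formula do the work of the expensive external one inside the main formula $\chi=\bigdoublewedge\forall\bigdoublevee\bigdoublewedge\exists\,\theta_{\bar b\bar c}$, which is then genuinely $\Pi_{\alpha+4}$. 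The induction accordingly jumps by $2$ (from $\alpha$ to $\alpha+2$) rather than by $1$, with separate base cases near $1$ and near limit ordinals handled by the classical $\Pi_{2\alpha}$ bound.
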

	
	\begin{proof}
		The classical analysis of the back-and-forth relations (see, e.g., Lemma VI.14 in \cite{MonBook}) gives several base cases for this claim.
		The formulas given there are $\Pi_{2\alpha}$, and for $\alpha=1$ or $2$ and $\alpha=\lambda$ or $\lambda+1$ for a limit level $\lambda$ we have $2\alpha\leq\alpha+2$ and so the claim is already shown.
		(In fact, for $\alpha = 1$, there is even a $\Pi_1$ formula satisfying the lemma.)
		
		We now complete the proof by proving the following inductive claim.
		\begin{claimstar}
			If for every $\bar{a}\in\mc{M}$ there is a formula $\varphi^\alpha_{\mc{M},\bar{a}}(\bar{x})$ satisfying the claim for $\alpha\in\omega_1$ then for every $\bar{a}\in\mc{M}$ there is a formula $\varphi^{\alpha+2}_{\mc{M},\bar{a}}(\bar{x})$ satisfying the claim for $\alpha+2$.
		\end{claimstar}
		
		\begin{proof}	
			Suppose that for each $\mc{M}$ and $\bar{q} \in \mc{M}$ there is a $\Pi_{\alpha+2}$ formula $\varphi^\alpha_{\mc{M},\bar{q}}(\bar{x})$
			such that $\mc{N} \models \varphi(\bar{p})$ if and only if $(\mc{N},\bar{p}) \geq_\alpha (\mc{M},\bar{q})$. 
			Given $\mc{M}$ and $\bar{a} \in \mc{M}$ we must show that there is a a $\Pi_{\alpha+4}$ formula $\varphi(\bar{x})$
			such that $\mc{N} \models \varphi(\bar{p})$ if and only if $(\mc{N},\bar{p}) \geq_{\alpha+2} (\mc{M},\bar{a})$. 
			For each $\bar{b} \in \mc{M}$, there is a $\Pi_{\alpha}$ formula $\theta_{\bar{b}}(\bar{x},\bar{y})$ defining the $\Pi_\alpha$ type of $\bar{a}\bar{b}$ in $\mc{M}$, i.e., such that $\mc{M} \models \theta_{\bar{b}}(\bar{a},\bar{c})$ if and only if $(\mc{M},\bar{a}\bar{c}) \geq_\alpha (\mc{M},\bar{a}\bar{b})$ (see \cite{MonBook} Lemma II.62). Let
			\[ \psi(\bar{x}) := \bigdoublewedge_\ell \forall y_1,\ldots,y_\ell \bigdoublewedge_{b_1,\ldots,b_\ell \in \mc{M}} \left( \theta_{\bar{b}}(\bar{x},\bar{y}) \longrightarrow \varphi^\alpha_{\mc{M},\bar{a}\bar{b}}(\bar{x},\bar{y}) \right)\]
			and let
			\[ \chi(\bar{x}) := \bigdoublewedge_k \forall y_1,\ldots,y_k \bigdoublevee_{b_1,\ldots,b_k \in \mc{M}} \bigdoublewedge_{c_1,\ldots,c_\ell \in \mc{M}} \exists z_1,\ldots,z_\ell \; \theta_{\bar{b}\bar{c}}(\bar{x},\bar{y},\bar{z}) .\]
			Note that $\psi$ is $\Pi_{\alpha+2}$ and $\chi$ is $\Pi_{\alpha+4}$, so that $\psi \wedge \chi$ is $\Pi_{\alpha+4}$. We claim that $\mc{N} \models \psi(\bar{p}) \wedge \chi(\bar{p})$ if and only if $(\mc{N},\bar{p}) \geq_{\alpha+2} (\mc{M},\bar{a})$ and therefore $\psi \wedge \chi$ is our desired formula.
			
			First, suppose that $\mc{N} \models \psi(\bar{p}) \wedge \chi(\bar{p})$. Given $\bar{q} \in \mc{N}$, by $\mc{N} \models \chi(\bar{p})$ there is $\bar{b} \in \mc{M}$ such that 
			\[ \mc{N} \models \bigdoublewedge_{c_1,\ldots,c_\ell \in \mc{M}} \exists z_1,\ldots,z_\ell \; \theta_{\bar{b}\bar{c}}(\bar{p},\bar{q},\bar{z}) .\]
			Then for any $\bar{c} \in \mc{M}$, there is $\bar{r} \in \mc{N}$ such that
			\[ \mc{N} \models \theta_{\bar{b}\bar{c}}(\bar{p},\bar{q},\bar{r}).\]
			By $\mc{N} \models \psi(\bar{p})$, we get
			\[ \mc{N} \models \varphi_{\mc{M},\bar{a}\bar{b}\bar{c}}^\alpha(\bar{p},\bar{q},\bar{r}).\]
			This implies that $(\mc{N},\bar{p}\bar{q}\bar{r}) \geq_\alpha (\mc{M},\bar{a}\bar{b}\bar{c})$.
			In particular, to win the $(\mc{N},\bar{p}) \geq_{\alpha+2} (\mc{M},\bar{a})$ game, the Duplicator can play $\bar{b}$ in response to $\bar{q}$ on the first move, $\bar{r}$ in response to $\bar{c}$ on the second move, and then play the winning strategy for $(\mc{N},\bar{p}\bar{q}\bar{r}) \geq_\alpha (\mc{M},\bar{a}\bar{b}\bar{c})$ to win the game, as desired.

			On the other hand, suppose that $(\mc{N},\bar{p}) \geq_{\alpha+2} (\mc{M},\bar{a})$. We must argue that $\mc{N} \models \psi(\bar{p})$ and $\mc{N} \models \chi(\bar{p})$. First, we argue that $\mc{M} \models \psi(\bar{a})$ and $\mc{M} \models \chi(\bar{a})$. The former is due to the definitions of $\theta_{\bar{b}}$ and $\varphi^\alpha_{\mc{M},\bar{a}\bar{b}}$. The latter is because $\mc{M} \models \theta_{\bar{b}\bar{c}}(\bar{a},\bar{b},\bar{c})$ for any $\bar{b}$ and $\bar{c}$ as the quantifiers and conjunctions/disjunctions are essentially both over elements of $\mc{M}$. Now since $\psi$ is $\Pi_{\alpha+2}$, and $(\mc{N},\bar{p}) \geq_{\alpha+2} (\mc{M},\bar{a})$, we have $\mc{N} \models \psi(\bar{p})$. Though $\chi$ is $\Pi_{\alpha+4}$, we can also argue that because $(\mc{M},\bar{a}) \models \chi$ and $(\mc{N},\bar{p}) \geq_{\alpha+2} (\mc{M},\bar{a})$ we get $\mc{N} \models \chi(\bar{p})$. We can argue this by hand, but one could also note that $\chi$ is $\A_{\alpha+2}$ and apply Proposition \ref{prop:transfer-over-bf} (and indeed the complexity of the argument below is one justification for considering this complexity class of formulas as we do in Section \ref{sec:defineAE}). To show that $\mc{N} \models \chi(\bar{p})$, for all $k$ and $\bar{q}$ we must show that
			\[ \mc{N} \models \bigdoublevee_{b_1,\ldots,b_k \in \mc{M}} \bigdoublewedge_{c_1,\ldots,c_\ell \in \mc{M}} \exists z_1,\ldots,z_\ell \; \theta_{\bar{b}\bar{c}}(\bar{p},\bar{q},\bar{z}).\]
			Choose $\bar{u} \in \mc{M}$ such that $(\mc{N},\bar{p}\bar{q}) \leq_{\alpha+1} (\mc{M},\bar{a}\bar{u})$. Since $\mc{M} \models \chi$, we have
			\[\mc{M} \models \bigdoublevee_{b_1,\ldots,b_k \in \mc{M}} \bigdoublewedge_{c_1,\ldots,c_\ell \in \mc{M}} \exists z_1,\ldots,z_\ell \; \theta_{\bar{b}\bar{c}}(\bar{a},\bar{u}
			,\bar{z}).\]
			So, for some disjunct $\bar{b}$ (in fact for $\bar{b} = \bar{u}$) for every $\bar{c}$ there are $\bar{v}_{\bar{c}} \in \mc{M}$ such that
			\[\mc{M} \models \theta_{\bar{b}\bar{c}}(\bar{a},\bar{u}
			,\bar{v}_{\bar{c}}).\]
			For each $\bar{c}$ choose $\bar{r}_{\bar{c}}$ such that $(\mc{M},\bar{a}\bar{u}\bar{v}_{\bar{c}}) \leq_\alpha (\mc{N},\bar{p}\bar{q}\bar{r}_{\bar{c}})$. Since $\theta_{\bar{b}\bar{c}}$ is $\Pi_\alpha$ we get
			\[\mc{N} \models \theta_{\bar{b}\bar{c}}(\bar{p},\bar{q}
			,\bar{r}_{\bar{c}}).\]
			Thus there is $\bar{b}$ such that for every $\bar{c}$ there is $\bar{r}_{\bar{c}}$ such that
			\[ \mc{N} \models \bigdoublevee_{b_1,\ldots,b_k \in \mc{M}} \bigdoublewedge_{c_1,\ldots,c_\ell \in \mc{M}} \exists z_1,\ldots,z_\ell \; \theta_{\bar{b}\bar{c}}(\bar{p},\bar{q},\bar{z}).\]
			Since we have shown this for all $k$ and $\bar{q}$, we have that $\mc{N} \models \chi(\bar{p})$.
		\end{proof}

		This completes the proof of Lemma \ref{lem:aboveGeneral}.
	\end{proof}
	
	Note that for $\alpha$ odd, finite, and greater than $1$ we do not get a better complexity even though the base case is stronger than required by the lemma.
	This is of no help in the inductive steps---the complexity comes from the formula $\chi$, which does not involve the formula given by the inductive hypothesis.
	
	We now prove the analog of Lemma \ref{lem:aboveGeneral} for $\{ \mc{N} : \mc{N} \leq_\alpha \mc{M} \}$.
	
	\begin{lemma}\label{lem:belowGeneral}
		Let $\mc{M}$ be a fixed countable structure. For each $\alpha\in\omega_1$ there is a $\Pi_{\alpha+3}$ formula $\varphi$
		such $\mc{N} \models \varphi(\bar{b})$ if and only if $(\mc{N},\bar{b}) \leq_\alpha (\mc{M},\bar{a})$. Hence the set
		\[ \{\mc{N} : \mc{N} \leq_\alpha \mc{M}\}\]
		is $\bfPi^0_{\alpha+3}$.
	\end{lemma}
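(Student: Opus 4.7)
The plan is to reduce $\leq_\alpha$ directly to the one-level-lower $\geq$ relation already handled by Lemma \ref{lem:aboveGeneral}, making this lemma essentially a corollary. I will treat $\alpha = 0$ separately (there $\leq_0$ is a finite quantifier-free condition on $\bar{b}$, hence trivially $\Pi_0 \subseteq \Pi_3$) and focus on $\alpha \geq 1$.

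Unwinding the definition, $(\mc{N},\bar{b}) \leq_\alpha (\mc{M},\bar{a})$ holds iff for every $\beta < \alpha$ and every $\bar{c} \in \mc{M}^{<\omega}$, there is some $\bar{d} \in \mc{N}$ with $(\mc{N},\bar{b}\bar{d}) \geq_\beta (\mc{M},\bar{a}\bar{c})$. For each such $\beta$ and $\bar{c}$, Lemma \ref{lem:aboveGeneral}, applied to $\mc{M}$ with distinguished tuple $\bar{a}\bar{c}$ at level $\beta$, gives a $\Pi_{\beta+2}$ formula $\theta^\beta_{\bar{c}}(\bar{x},\bar{y})$ such that $\mc{N} \models \theta^\beta_{\bar{c}}(\bar{p},\bar{q})$ iff $(\mc{N},\bar{p}\bar{q}) \geq_\beta (\mc{M},\bar{a}\bar{c})$. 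Since $\mc{M}$ is countable, the universal quantifier over $\bar{c}$ becomes a countable conjunction, while the existential witness $\bar{d}$ stays as an existential quantifier in the formula. The candidate sentence is then
\[
\varphi(\bar{x}) := \bigdoublewedge_{\beta < \alpha}\; \bigdoublewedge_{\ell < \omega}\; \bigdoublewedge_{\bar{c} \in \mc{M}^\ell} \exists \bar{y}\; \theta^\beta_{\bar{c}}(\bar{x}, \bar{y}),
\]
and correctness, $\mc{N} \models \varphi(\bar{b}) \Longleftrightarrow (\mc{N},\bar{b}) \leq_\alpha (\mc{M},\bar{a})$, follows at once from the characterization above.

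For the complexity bound, each $\theta^\beta_{\bar{c}}$ is $\Pi_{\beta+2}$, so each innermost conjunct $\exists \bar{y}\; \theta^\beta_{\bar{c}}$ is $\Sigma_{\beta+3}$; since $\beta < \alpha$ gives $\beta + 3 \leq \alpha + 2$, every conjunct lies in $\Sigma_{\alpha+2}$, and a countable conjunction of $\Sigma_{\alpha+2}$ formulas is $\Pi_{\alpha+3}$. There is no serious obstacle to overcome here---the real work was done in Lemma \ref{lem:aboveGeneral}---and the only thing to keep straight is the single extra level of complexity compared with that lemma: producing the witness $\bar{d}$ via an existential quantifier raises each conjunct by one level, so the outer countable conjunction lands at $\Pi_{\alpha+3}$ rather than $\Pi_{\alpha+2}$, precisely matching the bound claimed.
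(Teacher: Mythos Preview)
Your proof is correct and follows essentially the same approach as the paper: both unwind the definition of $\leq_\alpha$ into a countable conjunction over $\beta<\alpha$ and $\bar{c}\in\mc{M}$ of existential statements, then invoke Lemma~\ref{lem:aboveGeneral} for each inner $\geq_\beta$ clause. The only cosmetic difference is that the paper disposes of the limit case separately via the classical $\Pi_{2\alpha}$ description (which already satisfies the bound since $2\lambda=\lambda\leq\lambda+3$), whereas your argument handles all $\alpha\geq 1$ uniformly; both are fine.
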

	\begin{proof}
		If $\alpha$ is a limit ordinal, this claim follows at once from the classical description of back-and-forth relations.
		Therefore, we can assume that $\alpha=\beta+1$.
		We note that
		\[ (\mc{N},\bar{b}) \leq_\alpha (\mc{M},\bar{a}) \iff \mc{N}\models \bigwwedge_{\beta < \alpha} \bigwwedge_{\bar{c}\in\mc{M}}\exists \bar{x} (\mc{N},\bar{b},\bar{x}) \geq_\beta (\mc{M},\bar{a},\bar{c}).\]
		By Lemma \ref{lem:aboveGeneral} each formula $(\mc{N},\bar{b},\bar{d}) \geq_\beta (\mc{M},\bar{a},\bar{c})$ can be expressed in a $\Pi_{\beta+2}$ manner.
		This means that, overall, the formula is  $\Pi_{\alpha+3}$.    
	\end{proof}
	
	One contrast between the formulas described in Lemmas  \ref{lem:aboveGeneral} and \ref{lem:belowGeneral} to capture the back-and-forth relations and those described in the classical analysis of Lemma VI.14 in \cite{MonBook} lies in the computability of the formulas.
	The method in Lemma VI.14 in \cite{MonBook} is always computable in the simplest presentation of the structure $\mc{M}$.
	On the other hand, Lemmas \ref{lem:aboveGeneral} and \ref{lem:belowGeneral} rely on $\Pi_\alpha$ formulas that isolate the entire $\Pi_\alpha$-type of a tuple \textit{within} a given structure (provided by \cite{MonBook} Lemma II.62).
	Extracting such a formula given a tuple and a structure is not an effective procedure.
	In sum, there is a tradeoff at play here.
	The formulas in Lemmas \ref{lem:aboveGeneral} and \ref{lem:belowGeneral} may be far simpler in quantifier complexity when compared to the previously known descriptions of back-and-forth relations, but they are less computable.
	We conjecture that this tradeoff is necessary. 
	
	\begin{conjecture}
		{\ }
		\begin{enumerate}
			\item For $n \geq 2$ even, there is a structure $\mc{M}$ such that
			\[ \{ \mc{N} : \mc{N} \geq_n \mc{M} \} \]
			is $\Pi^0_{2n}$ but not $\Sigma^0_{2n}$.
			\item For $n \geq 3$ odd, there is a structure $\mc{M}$ such that
			\[ \{ \mc{N} : \mc{N} \geq_n \mc{M} \} \]
			is $\Pi^0_{2n-1}$ but not $\Pi^0_{2n-1}$.
			\item For $n \geq 2$ even, there is a structure $\mc{M}$ such that
			\[ \{ \mc{N} : \mc{N} \leq_n \mc{M} \} \]
			is $\Pi^0_{2n-1}$ but not $\Sigma^0_{2n-1}$.
			\item For $n \geq 3$ odd, there is a structure $\mc{M}$ such that
			\[ \{ \mc{N} : \mc{N} \leq_n \mc{M} \} \]
			is $\Pi^0_{2n}$ but not $\Sigma^0_{2n}$.
		\end{enumerate}
		Moreover, each of these is witnessed by an index set result for countable structures, e.g., for (1), the index set $\{ i : \mc{N}_i \leq_n \mc{M} \}$ is $\Pi^0_{2n}$ $m$-complete.
	\end{conjecture}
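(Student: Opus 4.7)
The conjecture asserts that the lightface (effective) complexity of the one-sided back-and-forth sets matches the classical $\bfPi^0_{2n}$ level (up to parity shifts), even though Lemmas \ref{lem:aboveGeneral} and \ref{lem:belowGeneral} give the strictly simpler boldface bounds $\bfPi^0_{n+2}$ and $\bfPi^0_{n+3}$. Its essential content is that the $\Pi_\alpha$ type-isolating formulas from \cite{MonBook} Lemma II.62, which drive those better boldface bounds, are genuinely non-effective in general.

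My plan is to proceed by induction on $n$. The base case $n=2$ of part (1) is already supplied by the result of Alvir, Csima, and Harrison-Trainor \cite{AlvirCsimaHarrisonTrainor}: they construct a computable $\mc{M}$ with a $\Pi_2$ Scott sentence but no computable $\Sigma_4$ Scott sentence, which directly gives $\{\mc{N} : \mc{N} \geq_2 \mc{M}\}$ being $\bfPi^0_2$ boldface and hence $\Pi^0_4$ lightface but not $\Sigma^0_4$, as required. For the base cases of parts (2), (3), and (4), I would carry out analogous direct constructions producing a computable $\mc{M}$ whose one-sided back-and-forth type encodes a $\Pi^0_k$-complete set at the smallest allowed $k$, using ``difference'' structures in the style of the ACH argument adapted to the $\leq$ rather than $\geq$ direction and to the appropriate parity of $n$.

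For the inductive step I would apply a jump inversion operator $J$ to pass from a witness $\mc{M}$ at level $n$ to a witness $J(\mc{M})$ at level $n+2$, advancing two levels at a time so as to preserve the parity hypothesis in the conjecture. Such a $J$ must satisfy: (a) $J$ preserves computability; (b) $J$ shifts the one-sided back-and-forth type complexity by exactly two levels, so that $\{\mc{N} : \mc{N} \geq_{n+2} J(\mc{M})\}$ is effectively bi-reducible with a two-step jump of $\{\mc{N} : \mc{N} \geq_n \mc{M}\}$; and (c) this translation commutes effectively with the lightface hierarchy, so that a $\Sigma^0_{2(n+2)}$ definition at the higher level would pull back to a $\Sigma^0_{2n}$ definition at the lower level, contradicting the inductive hardness. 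The jump inversions developed in Section \ref{sec:jump-inv} (used there for the lower bounds in the boldface case) should provide the raw material for constructing $J$.

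The main obstacle is property (c). Boldface jump inversions that preserve descriptive levels are standard, but in the lightface setting the inverse operation has to be \emph{uniformly} computable on the relevant computable inputs: any non-effective content in the inverse would disrupt the pullback. I would attempt to design $J$ so that its inverse is effectively computable on an appropriately parameterized family of indices, letting one effectively transport any hypothetical too-simple lightface definition of the higher-level back-and-forth set down to a contradiction at the base. Should the jump inversion path prove too delicate, a fallback is a direct single-stage construction generalizing the inductive scheme of Lemma \ref{lem:uniform}, built with a computable $\Pi^0_{2n}$-complete parameter throughout, and verified by an inductive combinatorial argument mirroring Sublemmas \ref{sublemma:complicated} and \ref{sublemma:complicated2} with explicit effective indexing of the sorts and equivalence classes.
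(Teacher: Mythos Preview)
This statement is a \emph{conjecture} in the paper, not a theorem; the paper gives no proof and explicitly remarks that ``the general case remains open.'' So there is no paper proof to compare against, and your proposal is an attack on an open problem.

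That said, your plan has a concrete gap in the inductive step. The effective bi-interpretation of Theorem~\ref{thm:BiInt} reduces a $\Pi_{\alpha+\beta}$ sentence about $\Phi_\alpha(\mc{A})$ to a $\Pi_\beta$ sentence about $\mc{A}$; it drops exactly $\alpha$ levels, not $2\alpha$. Concretely, if you jump by $2$ using $\Phi_2$ and suppose $\{\mc{N}':\mc{N}'\geq_{n+2}\Phi_2(\mc{M})\}$ were $\Sigma^0_{2(n+2)}=\Sigma^0_{2n+4}$, the pullback only yields that $\{\mc{N}:\mc{N}\geq_n\mc{M}\}$ is $\Sigma^0_{2n+2}$, which does not contradict the inductive hypothesis ``not $\Sigma^0_{2n}$.'' The same mismatch occurs if you step by $1$ through the parity cases: going from odd $n$ (not $\Sigma^0_{2n-1}$) to even $n{+}1$ you would need ``not $\Sigma^0_{2n+2}$,'' but the pullback from a hypothetical $\Sigma^0_{2n+2}$ definition only gives $\Sigma^0_{2n+1}$ at level $n$. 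So the induction does not close with the jump-inversion machinery of Section~\ref{sec:jump-inv} as stated; you would need a strictly stronger inductive hypothesis (e.g., $\Pi^0_{2n}$-$m$-completeness of the index set, as the ``moreover'' clause suggests) together with an argument that $\Phi_\alpha$ carries $m$-completeness up the right number of levels, and that is precisely the missing effective content.

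Separately, the base cases for parts (2), (3), and (4) are not handled. You correctly identify that the $n=2$ case of (1) follows from \cite{AlvirCsimaHarrisonTrainor} (the paper makes this same observation in the introduction), but ``analogous direct constructions in the style of ACH'' is a placeholder, not an argument: the paper poses exactly this as the open second conjecture, and only the single instance $n=2$ is known. Your fallback of directly generalizing the scheme of Lemma~\ref{lem:uniform} has the same issue: that lemma produces structures $\mc{M}$ and $\mc{N}$ that \emph{both} vary with the input, whereas here $\mc{M}$ must be fixed and computable while the hardness is encoded entirely in $\mc{N}$, which is a different and harder problem.
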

	
	Note that the difference between the odd and even cases is due to the base case, and whether the last case of the back-and-forth game is in $\mc{M}$ or $\mc{N}$. See the difference between Lemma \ref{lem:easy1} and Lemma \ref{lem:easy2}.
	
	(1), for example, would be implied by the following conjecture from Scott analysis:
	
	\begin{conjecture}
		For each even $n \geq 2$, there is a computable structure $\mc{M}$ with a $\Pi_n$ Scott sentence but no computable $\Sigma_{2n}$ Scott sentence.
	\end{conjecture}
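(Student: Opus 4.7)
The plan is to extend the $n=2$ construction of Alvir, Csima, and Harrison-Trainor to each even $n \geq 2$, building via a priority argument a computable structure $\mc{M}$ that simultaneously (a) admits a $\Pi_n$ Scott sentence and (b) diagonalizes against every computable $\Sigma_{2n}$ candidate Scott sentence. For (a), I would use a structure assembled from nested equivalence relations of depth roughly $n$ (in spirit like the relations $\mathbb{E}_n,\ldots,\mathbb{E}_3$ appearing in the proof of Lemma \ref{lem:uniform}) together with counting predicates, so that the isomorphism type is pinned down by the configuration of class sizes at each level. A standard argument then yields a $\Pi_n$ Scott sentence whose core is ``for every level, the class-size configuration is exactly thus-and-so,'' and one arranges the configurations so that no $\Pi_{n-1}$ or $\Sigma_n$ Scott sentence exists.

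For (b), I would enumerate the partial computable $\Sigma_{2n}$ sentences $\{\psi_e\}_{e\in\omega}$ and impose, for each $e$, the requirement $R_e$: if $\mc{M}\models\psi_e$, then some $\mc{N}_e\not\cong\mc{M}$ also satisfies $\psi_e$. The witnesses $\mc{N}_e$ are obtained as local perturbations of $\mc{M}$ at a sufficiently deep level of the equivalence hierarchy, chosen so that $\mc{N}_e \equiv_{2n-1} \mc{M}$, which renders $\mc{N}_e$ indistinguishable from $\mc{M}$ by any $\Sigma_{2n-1}$ sentence; an additional careful choice, using the ``one more alternation'' afforded by $\Sigma_{2n}$, is needed to specifically defeat $\psi_e$ by putting the perturbation inside an appropriate quantifier block. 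The existence of such perturbations is the combinatorial content supplied by effective analogues of Sublemmas \ref{sublemma:complicated} and \ref{sublemma:complicated2}; the priority argument then orchestrates these perturbations alongside the construction of $\mc{M}$ itself, so that $\mc{M}$ stabilizes to a definite isomorphism type while every $\mc{N}_e$ differs from $\mc{M}$ in a way invisible to $\psi_e$.

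The main obstacle is reconciling the two sides. The $\Pi_n$ Scott sentence forces $\mc{M}$ to be rigidly determined by low-level invariants, while the diagonalization demands enough local variation at higher levels to fool any computable $\Sigma_{2n}$ description; the perturbations must be simultaneously $\equiv_{n-1}$-invisible to any witness the Scott sentence cares about, yet $\Sigma_{2n}$-invisible to $\psi_e$. Carrying out the priority construction with injury controlled by the level in the equivalence hierarchy---generalizing the $n=2$ case and extracting effective (lightface) versions of the boldface completeness results proved earlier in the paper---is where the bulk of the work lies, and the combinatorial bookkeeping grows rapidly with $n$. A successful argument of this form should also establish the index-set refinement, showing that $\{i : \mc{N}_i \cong \mc{M}\}$ is $\Pi^0_{2n}$ $m$-complete, and thereby yield part (1) of the preceding conjecture as well.
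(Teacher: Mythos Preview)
The statement you are attempting to prove is labeled as a \emph{conjecture} in the paper, not a theorem, and the paper explicitly states after it that ``The general case remains open.'' There is therefore no proof in the paper to compare against. What the paper does provide is context: the $n=2$ case is known by Alvir, Csima, and Harrison-Trainor, and the remark that this conjecture would imply part (1) of the preceding conjecture about index sets.

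Your proposal is not a proof but a sketch of a strategy, and you correctly flag its weak point. The tension you identify between (a) and (b) is real and is presumably why the problem is open: having a $\Pi_n$ Scott sentence means every automorphism orbit is $\Sigma_{n-1}$-definable, so the structure is rather rigid at low levels, yet you must still produce for each computable $\Sigma_{2n}$ sentence $\psi_e$ a non-isomorphic $\mc{N}_e$ that $\psi_e$ cannot separate from $\mc{M}$. Your suggestion of taking $\mc{N}_e \equiv_{2n-1} \mc{M}$ does not by itself suffice, since a $\Sigma_{2n}$ sentence can distinguish $\equiv_{2n-1}$-equivalent structures; you acknowledge this with the phrase ``an additional careful choice,'' but that is exactly where the substance of any actual proof would have to lie, and nothing in the proposal indicates how to make that choice effectively and compatibly with the priority construction. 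Likewise, the appeal to ``effective analogues of Sublemmas \ref{sublemma:complicated} and \ref{sublemma:complicated2}'' is aspirational: those sublemmas are boldface tools, and the paper itself conjectures (but does not prove) that the lightface versions hold. In short, your outline is a reasonable description of the shape a proof might take, but it does not supply the missing idea, and the paper does not either.
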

	
	Alvir, Knight, and McCoy \cite{AlvirKnightMcCoy} produce a structure with a $\Pi_2$ Scott sentence but no computable $\Pi_2$ Scott sentence. Alvir, Csima, and Harrison-Trainor \cite{AlvirCsimaHarrisonTrainor} produce a structure with a $\Pi_2$ Scott sentence but no computable $\Sigma_4$ Scott sentence. The general case remains open.
	
	\subsection{Exceptional bounds}

	We now consider the simplest cases of Theorem \ref{thm:main-complexity} with $\alpha=1$, $\alpha=2$, or $\alpha$ near a limit ordinal.
	These small values have improved descriptions for the $\alpha$-type when compared to the general results of the previous section.
	
	\begin{lemma}\label{lem:easy1}
		For any structure $\mc{M}$ the set
		\[ \{ \mc{N} : \mc{N} \geq_1 \mc{M} \} \]
		is $\bfPi^0_{1}$. 
	\end{lemma}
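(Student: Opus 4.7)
The plan is to exhibit a single $\Pi_1$ sentence of $\mc{L}_{\omega_1,\omega}$ that defines $\{\mc{N} : \mc{N} \geq_1 \mc{M}\}$; since any $\Pi_1$ sentence cuts out a closed subset of $\Mod(\mc{L})$, this immediately gives the desired $\bfPi^0_1$ bound.

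First I would apply Karp's theorem (stated earlier in the excerpt) to rewrite the condition $\mc{M} \leq_1 \mc{N}$ as saying that every $\Pi_1$ sentence true in $\mc{M}$ is true in $\mc{N}$. Unfolding what this means at level one: for every quantifier-free formula $\theta(\bar{x})$ in the (countable) language $\mc{L}$ with $\mc{M} \models \forall \bar{x}\,\theta(\bar{x})$, also $\mc{N} \models \forall \bar{x}\,\theta(\bar{x})$.

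Next I would take
\[ \varphi \ := \ \bigwwedge \bigl\{\forall \bar{x}\,\theta(\bar{x}) \; : \; \theta \text{ quantifier-free in } \mc{L}, \ \mc{M} \models \forall \bar{x}\,\theta(\bar{x})\bigr\}. \]
Since $\mc{L}$ is countable, there are only countably many quantifier-free formulas, so this is a legitimate $\mc{L}_{\omega_1,\omega}$ formula, and it is visibly $\Pi_1$ as the conjunction of universally quantified quantifier-free formulas. By the previous paragraph, $\mc{N} \models \varphi$ if and only if $\mc{M} \leq_1 \mc{N}$, i.e., $\mc{N} \geq_1 \mc{M}$.

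Finally, any $\Pi_1$ sentence defines a closed set in $\Mod(\mc{L})$: its negation is a countable union of sets of the form $\{\mc{N} : \exists \bar{d} \in \mc{N},\ \mc{N} \models \neg\theta(\bar{d})\}$, each of which is open since the basic clopen sets of $\Mod(\mc{L})$ are determined by finitary quantifier-free facts about named tuples. Thus $\{\mc{N} : \mc{N} \geq_1 \mc{M}\}$ is $\bfPi^0_1$. There is essentially no obstacle here; this is the easy base case of the hierarchy, and the genuine content of Theorem \ref{thm:main-complexity} lies in the higher-level cases and the matching lower bounds.
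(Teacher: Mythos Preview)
Your proof is correct and takes essentially the same approach as the paper: both take the conjunction of all finitary universal sentences true in $\mc{M}$ as the defining $\Pi_1$ formula. You have simply added a bit more detail (the explicit appeal to Karp's theorem and the verification that $\Pi_1$ sentences define closed sets), but the argument is the same one-line observation.
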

	\begin{proof}
		The conjunction of the finitary universal sentences true in $\mc{M}$ defines the set as all $\Pi_1$ formulas are a conjunction of these formulas.
	\end{proof}
	
	\begin{lemma}\label{lem:easy2}
		For any structure $\mc{M}$ the set
		\[ \{\mc{N} : \mc{N} \leq_1 \mc{M}\}\]
		is $\bfPi^0_2$.
	\end{lemma}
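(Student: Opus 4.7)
The plan is to mirror the approach of Lemma \ref{lem:easy1}, but dualized: where that lemma used the finitary universal theory of $\mc{M}$, here I would use the finitary existential theory. By Karp's theorem, $\mc{N} \leq_1 \mc{M}$ holds if and only if every $\Sigma_1$ sentence true in $\mc{M}$ is also true in $\mc{N}$. An infinitary $\Sigma_1$ sentence is (equivalent to) a countable disjunction of finitary existential sentences, and $\mc{M}$ satisfies a countable disjunction exactly when it satisfies some disjunct; so this condition reduces to requiring that every \emph{finitary} existential sentence true in $\mc{M}$ be true in $\mc{N}$.

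Accordingly, I would take the defining sentence to be
\[ \varphi_{\mc{M}} := \bigdoublewedge\bigl\{ \exists \bar{x}\, \psi(\bar{x}) : \psi \text{ finitary quantifier-free}, \ \mc{M} \models \exists \bar{x}\, \psi(\bar{x}) \bigr\}. \]
This is visibly a $\Pi_2$ formula of $\mc{L}_{\omega_1,\omega}$, and by the reduction above, $\mc{N} \models \varphi_{\mc{M}}$ if and only if $\mc{N} \leq_1 \mc{M}$. To extract the $\bfPi^0_2$ bound directly in $\Mod(\mc{L})$, note that each conjunct $\{\mc{N} : \mc{N} \models \exists \bar{x}\, \psi(\bar{x})\}$ is open in the standard Polish topology on $\Mod(\mc{L})$, so the conjunction defines a countable intersection of opens, i.e., a $\bfPi^0_2$ set.

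There is no real obstacle here; the only thing to check is the reduction from infinitary $\Sigma_1$ to finitary existential, and that is immediate. The asymmetry with Lemma \ref{lem:easy1} (one level higher) arises because in the back-and-forth game defining $\leq_1$ the ``last move'' now happens in $\mc{M}$ rather than $\mc{N}$, forcing an existential quantifier over $\mc{N}$ and thus an extra level of complexity.
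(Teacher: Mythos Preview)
Your proposal is correct and essentially matches the paper's approach: the paper also defines the set by a countable conjunction of finitary existential sentences, namely $\bigdoublewedge_{\bar{a} \in \mc{M}} \exists \bar{y}\, \theta_{\bar{a}}(\bar{y})$ where $\theta_{\bar{a}}$ pins down the partial atomic type of $\bar{a}$, read off directly from the definition of $\leq_1$ rather than via Karp's theorem. The two formulas are equivalent and both visibly $\Pi_2$.
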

	\begin{proof}
		This is the same upper bound given previously, straight from the definition. Let $\theta_{\bar{a}}(\bar{x})$ be the finitary quantifier-free formula which says that $\bar{x}$ and $\bar{a}$ satisfy the same atomic formulas (from among the first $|\bar{a}|$-many formulas). Then
		\[ \mc{N}
		\le_1 \@M  \iff  \@N
		\models \bigdoublewedge_{\bar{a} \in \mc{M}^n} \exists y_1,\ldots,y_n \; \theta_{\bar{a}}(y_1,\ldots,y_n).\qedhere\]
	\end{proof}

	\begin{lemma}
		For any structure $\mc{M}$ the set
		\[ \{\mc{N} : \mc{N} \leq_2 \mc{M}\}\]
		is $\mathbf{\Pi}^0_3$.
	\end{lemma}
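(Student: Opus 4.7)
The plan is to unwind one step of the back-and-forth definition of $\leq_2$ and then exploit that the existential quantifier over $\bar{a} \in \mc{M}$ can be absorbed into an infinitary disjunction, since $\mc{M}$ is a fixed countable structure. This should save one quantifier level compared to the naive $\bfPi^0_4$ bound coming directly from Lemma~\ref{lem:belowGeneral}.

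By definition, $\mc{N} \leq_2 \mc{M}$ if and only if for every $\bar{b} \in \mc{N}$ there exists $\bar{a} \in \mc{M}$ of matching length with $(\mc{N},\bar{b}) \geq_1 (\mc{M},\bar{a})$. First I would express the inner condition parametrically: for each $n$ and each $\bar{a} \in \mc{M}^n$, let $\theta_{\bar{a}}(\bar{x})$ denote the $\Pi_1$ formula given by the conjunction of all universal finitary formulas true of $\bar{a}$ in $\mc{M}$, which is a parametrized analogue of the formula used in the proof of Lemma~\ref{lem:easy2}. By Karp's theorem, $\mc{N} \models \theta_{\bar{a}}(\bar{b})$ holds precisely when $(\mc{N},\bar{b}) \geq_1 (\mc{M},\bar{a})$.

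Then I would assemble these into the single sentence
\[ \varphi \;:=\; \bigdoublewedge_{n \in \omega} \forall x_1, \ldots, x_n \; \bigdoublevee_{\bar{a} \in \mc{M}^n} \theta_{\bar{a}}(x_1, \ldots, x_n), \]
and verify that $\mc{N} \models \varphi$ iff $\mc{N} \leq_2 \mc{M}$. For the complexity count, each $\theta_{\bar{a}}$ is $\Pi_1$, so the inner infinitary disjunction over $\bar{a} \in \mc{M}^n$ is $\Sigma_2$; prepending the universal quantifier $\forall x_1, \ldots, x_n$ and taking the outer $\bigdoublewedge_n$ then yields a $\Pi_3$ sentence, as required.

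I do not anticipate any substantive obstacle; the main conceptual point is simply that, with $\mc{M}$ fixed and countable, the existential $\exists \bar{a} \in \mc{M}$ in the back-and-forth definition becomes a countable infinitary disjunction rather than a genuine quantifier, which is precisely what collapses one alternation.
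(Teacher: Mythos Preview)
Your unwinding of the definition is backwards. In Definition~\ref{def:bfasym}, $(\mc{N},\varnothing) \leq_2 (\mc{M},\varnothing)$ says that for every $\bar{d} \in \mc{M}$ there exists $\bar{c} \in \mc{N}$ with $(\mc{M},\bar{d}) \leq_1 (\mc{N},\bar{c})$, i.e.\ $(\mc{N},\bar{c}) \geq_1 (\mc{M},\bar{d})$. The universal quantifier ranges over the \emph{second} structure $\mc{M}$ and the existential over the \emph{first} structure $\mc{N}$, not the other way around. Your formula therefore does not define $\{\mc{N} : \mc{N} \leq_2 \mc{M}\}$. For a concrete failure: let $\mc{M}$ have a unary predicate $U$ holding on an infinite--coinfinite set, and let $\mc{N}$ have $U$ empty. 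Then $\forall x\,\neg U(x)$ is $\Pi_1$, true in $\mc{N}$, false in $\mc{M}$, so $\mc{N} \nleq_2 \mc{M}$; yet your condition holds, since for any $\bar{b} \in \mc{N}$ one may take $\bar{a} \in \mc{M}$ consisting of $\neg U$-points, and the $\Pi_1$-type of $\bar{a}$ in $\mc{M}$ (which contains no nontrivial universal statements) is satisfied by $\bar{b}$.

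The fix is exactly what the paper does: since the universal is over $\mc{M}$, it becomes an infinitary \emph{conjunction}, and the existential over $\mc{N}$ remains a genuine quantifier:
\[
\mc{N} \leq_2 \mc{M} \iff \mc{N} \models \bigdoublewedge_{\bar{a} \in \mc{M}} \exists \bar{x}\; \theta_{\bar{a}}(\bar{x}),
\]
with $\theta_{\bar{a}}$ the $\Pi_1$ formula from Lemma~\ref{lem:easy1} expressing $(\mc{N},\bar{x}) \geq_1 (\mc{M},\bar{a})$. This is $\bigdoublewedge \exists\, \Pi_1$, hence $\Pi_3$. Your closing remark that ``the existential $\exists \bar{a} \in \mc{M}$ \ldots\ becomes a countable infinitary disjunction'' reflects the same reversal: it is the \emph{universal} over $\mc{M}$ that collapses to a conjunction.
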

	
	\begin{proof}
		Note that 
		\[\mc{N} \leq_2 \mc{M}\iff\bigwwedge_{\bar{a}\in \mc{M}}\exists \bar{x}\; (\mc{N},\bar{x})\geq_1(\mc{M},\bar{a})\]
		which is $\bfPi^0_3$ by the upper bound established in Lemma \ref{lem:easy1}.
	\end{proof}
	
	Another place where the bounds are exceptional is near limit ordinals.
	Just like the small finite values, we can improve the bounds for these ordinals.
	We consider these cases below.
	
	\begin{lemma}\label{lem:succlimitupper}
		For any structure $\@M$ we have the following.
		\begin{enumerate}
			\item If $\alpha=\lambda$ where $\lambda$ is a limit ordinal, the set $\{\mc{N} \; : \; \mc{N} \leq_\alpha \mc{M}\}=\{\mc{N} \; : \; \mc{N} \geq_\alpha \mc{M}\}$ is $\bfPi^0_{\alpha}$.
			\item If $\alpha=\lambda+1$ where $\lambda$ is a limit ordinal, the set $\{\mc{N} \; : \; \mc{N} \geq_\alpha \mc{M}\}$ is $\bfPi^0_{\alpha+1}$.
			\item If $\alpha=\lambda+1$ where $\lambda$ is a limit ordinal, the set $\{\mc{N} \; : \; \mc{N} \leq_\alpha \mc{M}\}$ is $\bfPi^0_{\alpha+1}$.
		\end{enumerate}
	\end{lemma}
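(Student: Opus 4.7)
The plan is to exploit the fact that at limit levels the back-and-forth relations collapse to the conjunction of all lower levels, so that part (1) follows by countable intersection from Lemma \ref{lem:belowGeneral}, and parts (2) and (3) then follow from the usual successor step with a single additional layer of quantifiers.

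The preliminary step is to establish that for any limit ordinal $\lambda$ and structures $\mc{M}, \mc{N}$,
\[\mc{N} \leq_\lambda \mc{M} \iff (\forall \beta < \lambda)\; \mc{N} \leq_\beta \mc{M} \iff (\forall \beta < \lambda)\; \mc{M} \leq_\beta \mc{N} \iff \mc{N} \geq_\lambda \mc{M}.\]
The outer two equivalences unfold the definition of $\leq_\lambda$ combined with the standard monotonicity of $\leq_\beta$ in $\beta$. The middle equivalence uses Karp's theorem: for $\beta < \lambda$ every $\Pi_\beta$ formula is also $\Sigma_{\beta+1}$ and every $\Sigma_\beta$ formula is also $\Pi_{\beta+1}$, and since $\beta + 1 < \lambda$ when $\lambda$ is a limit, the $\Pi_{<\lambda}$-theories of $\mc{M}$ and $\mc{N}$ agree if and only if the $\Sigma_{<\lambda}$-theories do.

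For (1), this rewrites $\{\mc{N} : \mc{N} \leq_\lambda \mc{M}\} = \{\mc{N} : \mc{N} \geq_\lambda \mc{M}\}$ as $\bigcap_{\beta < \lambda} \{\mc{N} : \mc{N} \leq_\beta \mc{M}\}$. By Lemma \ref{lem:belowGeneral} each set in the intersection is $\bfPi^0_{\beta+3}$, and since $\lambda$ is a limit we have $\beta + 3 < \lambda$ for every $\beta < \lambda$; thus this is a countable intersection of sets from $\bigcup_{\gamma < \lambda} \bfPi^0_\gamma$, which lies in $\bfPi^0_\lambda$. For (2), monotonicity simplifies
\[\mc{N} \geq_{\lambda+1} \mc{M} \iff (\forall \bar{d} \in \mc{N})(\exists \bar{c} \in \mc{M})\; (\mc{N}, \bar{d}) \leq_\lambda (\mc{M}, \bar{c}),\]
since demanding the $\beta = \lambda$ condition already implies the conditions at all $\beta < \lambda+1$. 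For each fixed $\bar{c} \in \mc{M}$ and each fixed $\bar{d}$ in the domain of $\mc{N}$, part (1) applied to the parameterized structure $(\mc{M}, \bar{c})$ shows that $\{\mc{N} : (\mc{N}, \bar{d}) \leq_\lambda (\mc{M}, \bar{c})\}$ is $\bfPi^0_\lambda$. The existential over $\bar{c}$ produces a $\bfSigma^0_{\lambda+1}$ set by countable union, and the outer universal over $\bar{d}$ produces $\bfPi^0_{\lambda+2} = \bfPi^0_{\alpha+1}$. Part (3) is symmetric:
\[\mc{N} \leq_{\lambda+1} \mc{M} \iff (\forall \bar{d} \in \mc{M})(\exists \bar{c} \in \mc{N})\; (\mc{N}, \bar{c}) \geq_\lambda (\mc{M}, \bar{d}),\]
with the same quantifier count applied using part (1) on $(\mc{M}, \bar{d})$.

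I do not anticipate any serious obstacle: the crux is the preliminary collapse of $\leq_\lambda$ and $\geq_\lambda$ at limit levels. The one subtlety worth highlighting is that at a limit $\lambda$ a countable intersection of sets each of class $\bfPi^0_\beta$ for some $\beta < \lambda$ lands in $\bfPi^0_\lambda$ rather than $\bfPi^0_{\lambda+1}$; this is exactly what makes (1) land cleanly at $\bfPi^0_\lambda$ and in turn lets (2) and (3) reach the sharp $\bfPi^0_{\alpha+1}$ bound.
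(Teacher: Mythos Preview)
Your argument is correct. The paper's own proof is even shorter: it simply invokes the classical $\bfPi^0_{2\alpha}$ description of the back-and-forth relation (Lemma~VI.14 of \cite{MonBook}) together with the ordinal arithmetic $2\lambda=\lambda$ and $2(\lambda+1)=\lambda+2$, which immediately yields all three parts at once. Your route is slightly more roundabout---you obtain (1) by intersecting the bounds from Lemma~\ref{lem:belowGeneral} (which itself ultimately rests on the same classical $\bfPi^0_{2\alpha}$ bound) and then derive (2) and (3) from (1) by unfolding one successor step---but it is equally valid and has the minor advantage of staying within the paper's own lemmas rather than citing \cite{MonBook} directly.
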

	
	\begin{proof}
		Each of these follows immediately from the classical description of the back-and-forth relations from Lemma VI.14 in \cite{MonBook}.
		In particular, note that $\lambda=2\lambda$ and $(\lambda+1)+1=2(\lambda+1)$.
	\end{proof}

	\begin{lemma}
		If $\lambda$ is a limit ordinal the set
		\[ \{ \mc{N} : \mc{M} \geq_{\lambda+2} \mc{N} \} \]
		is $\bfPi^0_{\lambda+3}$ for all $\mc{M}$.
	\end{lemma}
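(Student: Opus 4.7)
The plan is to exploit two simplifications that together improve the naive $\bfPi^0_{\lambda+5}$ bound from Lemma \ref{lem:belowGeneral}: monotonicity of $\leq_\beta$ in $\beta$ (to collapse the outer universal $\bigwwedge_{\beta<\lambda+2}$ to a single instance at $\beta=\lambda+1$), and a Scott-style isolation of the $\Pi_{\lambda+1}$-type of each tuple of $\mc{M}$ by a single $\Pi_{\lambda+1}$ formula.

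First I would note that a tuple $\bar{c}\in\mc{N}$ witnessing $(\mc{M},\bar{d})\leq_{\lambda+1}(\mc{N},\bar{c})$ automatically witnesses the same condition for every smaller $\beta$, so
\[\mc{N}\leq_{\lambda+2}\mc{M}\iff\forall\,\bar{d}\in\mc{M},\ \exists\,\bar{c}\in\mc{N},\ (\mc{M},\bar{d})\leq_{\lambda+1}(\mc{N},\bar{c}).\]
Since $\mc{M}$ is fixed and countable, the outer $\forall\,\bar{d}\in\mc{M}$ becomes a countable conjunction $\bigwwedge_{\bar{d}}$ and is not a genuine quantifier over the varying $\mc{N}$.

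Next, for each $\bar{d}\in\mc{M}$, I would invoke \cite{MonBook} Lemma II.62 to obtain a single $\Pi_{\lambda+1}$ formula $\theta_{\bar{d}}(\bar{x})$ isolating the $\Pi_{\lambda+1}$-type of $\bar{d}$ in $\mc{M}$, with the crucial property that it characterizes $\geq_{\lambda+1}$ across structures: for any $\mc{N}$ and $\bar{c}$,
\[\mc{N}\models\theta_{\bar{d}}(\bar{c})\iff(\mc{N},\bar{c})\geq_{\lambda+1}(\mc{M},\bar{d}).\]
This cross-structure property is exactly what lets us save a level over the $\bfPi^0_{\lambda+2}$ formula one would obtain by naively applying Lemma \ref{lem:succlimitupper}(2). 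Substituting then yields
\[\bigl\{\mc{N}:\mc{M}\geq_{\lambda+2}\mc{N}\bigr\}=\Bigl\{\mc{N}:\mc{N}\models\bigwwedge_{\bar{d}\in\mc{M}}\exists\,\bar{x}\ \theta_{\bar{d}}(\bar{x})\Bigr\}.\]

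Finally, the Borel complexity is a routine calculation: each set $\{(\mc{N},\bar{c}):\mc{N}\models\theta_{\bar{d}}(\bar{c})\}$ is $\bfPi^0_{\lambda+1}$, taking the union over the countably many tuples $\bar{c}$ yields a $\bfSigma^0_{\lambda+2}$ set for each $\bar{d}$, and a countable intersection of $\bfSigma^0_{\lambda+2}$ sets is $\bfPi^0_{\lambda+3}$. The genuinely delicate point, and the main obstacle, is the cross-structure character of $\theta_{\bar{d}}$: the formula must be the strongest $\Pi_{\lambda+1}$ formula in the type (not merely a generic element), so that its satisfaction in any model $\mc{N}$ forces the entire $\Pi_{\lambda+1}$-type of $\bar{d}$ to be realized there; verifying this is precisely what \cite{MonBook} Lemma II.62 provides for countable $\mc{M}$.
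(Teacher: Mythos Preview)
Your proposal contains a genuine gap at the key step. You claim that \cite{MonBook} Lemma II.62 yields a $\Pi_{\lambda+1}$ formula $\theta_{\bar d}$ with the \emph{cross-structure} property
\[
\mc{N}\models\theta_{\bar d}(\bar c)\iff(\mc{N},\bar c)\geq_{\lambda+1}(\mc{M},\bar d),
\]
but that lemma only guarantees the \emph{internal} version: $\mc{M}\models\theta_{\bar d}(\bar c)$ iff $(\mc{M},\bar c)\geq_{\lambda+1}(\mc{M},\bar d)$. The forward cross-structure implication is fine (transfer of $\Pi_{\lambda+1}$ formulas), but the converse fails in general. Indeed, the paper's own Lemma~\ref{lem:succlimit} (and Theorem~\ref{thm:main-complexity}(1)(c)) shows that for some $\mc{M}$ the set $\{\mc{N}:\mc{N}\geq_{\lambda+1}\mc{M}\}$ is $\bfPi^0_{\lambda+2}$-complete, so no $\Pi_{\lambda+1}$ sentence can define it. Your ``strongest $\Pi_{\lambda+1}$ formula in the type'' does not exist as an $\mc{L}_{\omega_1\omega}$ formula: there are continuum many $\Pi_{\lambda+1}$ formulas, and their conjunction is not countable. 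This is exactly why in Lemma~\ref{lem:aboveGeneral} the paper needs both the internal $\theta_{\bar b}$ \emph{and} the cross-structure $\varphi^\alpha_{\mc{M},\bar a\bar b}$ separately.

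The paper's actual argument avoids this obstacle by dropping one level: it works with $\Pi_\lambda$-types $p_{\bar a}$, which \emph{do} admit a $\Pi_\lambda$ cross-structure characterization (since $\lambda$ is a limit, the conjunction $\bigwwedge_{\alpha<\lambda}\varphi^\alpha_{\bar a}$ is $\Pi_\lambda$). It then encodes $\mc{M}\geq_{\lambda+2}\mc{N}$ via two sentences: a $\Pi_{\lambda+2}$ sentence $\psi$ asserting that $\mc{N}$ realizes exactly the $\Pi_\lambda$-types realized in $\mc{M}$, and a $\Pi_{\lambda+3}$ sentence $\theta$ asserting that for each $\bar a\in\mc{M}$ some $\bar x$ in $\mc{N}$ avoids every $\Pi_\lambda$-type realized in $\mc{M}$ that does \emph{not} extend $p_{\bar a}$. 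The point is that negating types in $\overline{P_{\bar a}}$ and then using $\psi$ to force the realized type into $P_{\bar a}$ recovers $(\mc{N},\bar x\bar y)\equiv_\lambda(\mc{M},\bar a\bar a')$ without ever needing a cross-structure $\Pi_{\lambda+1}$ isolator.
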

	
	\begin{proof}
		For this proof, we let $p_{\bar{a}}$ denote the $\Pi_\lambda$-type of $\bar{a}\in \mc{M}$, and note that $p_{\bar{a}}(\bar{x})$ can be expressed as a $\Pi_\lambda$ formula (as the conjunction of the formulas $\varphi^\alpha_{\bar{a}}(\bar{x})$ for $\alpha < \lambda$ expressing that $(\mc{N},\bar{x}) \geq_\alpha (\mc{M},\bar{a})$). First, if $\mc{M} \geq_{\lambda+2} \mc{N}$, then the $\Pi_{\lambda}$-types realised in $\mc{N}$ are the same as the $\Pi_\lambda$-types realised in $\mc{M}$. (Or, what is the same, $\mc{M} \equiv_{\lambda+1} \mc{N}$). This can be expressed by the $\Pi_{\lambda+2}$ sentence
		\[ \psi = \bigdoublewedge_{\bar{a} \in \mc{M}} \exists \bar{x} \; p_{\bar{a}}(\bar{x}) \;\; \wedge \;\; \bigdoublewedge_n \forall x_1,\ldots,x_n \bigdoublevee_{\bar{a} \in \mc{M}} p_{\bar{a}}(\bar{x}). \]
		Now for each $\bar{a} \in \mc{M}$, let $P_{\bar{a}}$ be the set of all $\Pi_{\lambda}$-types extending that of $\bar{a}$ realized in the structure $\mc{M}$,
		\[ P_{\bar{a}} = \{ p_{\bar{a}\bar{b}}(\bar{x},\bar{y}) : \bar{b}\in \mc{M}\}.\]
		Let $\overline{P_{\bar{a}}}$ be the set of all $\Pi_{\lambda}$ types realised in $\mc{M}$ but not in $P_{\bar{a}}$.
		Consider the sentence
		\[ \theta = \bigdoublewedge_{\bar{a} \in \mc{M}} \exists \bar{x} \bigdoublewedge_{p \in \overline{P_{\bar{a}}}} \neg \exists \bar{y} \;  p(\bar{x},\bar{y}).\]
		Note that $\theta$ is true in $\mc{M}$ by construction and that it is a $\Pi_{\lambda+3}$ formula. In particular, $\theta$ is a conjunction of $\Sigma_{\lambda+2}$ formulas.
		If $\mc{M} \geq_{\lambda+2} \mc{N}$, then since $\theta$ is true in $\mc{M}$, each of its $\Sigma_{\lambda+2}$ conjuncts is true in $\mc{N}$.
		This means that $\theta$ is true in $\mc{N}.$
		(Alternatively, one can note that $\theta$ is $\vwE_\alpha$ and appeal to Proposition \ref{prop:transfer-over-bf} which we prove later.) Thus if $\mc{M} \geq_{\lambda+2} \mc{N}$ then $\mc{N} \models \psi \wedge \theta$.
		
		On the other hand, suppose that $\mc{N} \models \psi \wedge \theta$. Then $\mc{M}$ and $\mc{N}$ realise the same $\Pi_{\lambda}$ types. Given $\bar{a}\in \mc{M}$, choose $\bar{b} \in \mc{N}$ such that
		\[ \mc{N} \models \bigdoublewedge_{p \in \overline{P_{\bar{a}}}} \neg \exists \bar{y} \;  p(\bar{b},\bar{y}).\]
		Now given $\bar{b}' \in \mc{N}$, we have that
		\[ \mc{N} \models \bigdoublewedge_{p \in \overline{P_{\bar{a}}}} \neg p(\bar{b},\bar{b}').\]
		But the $\Pi_\lambda$ types realised in $\mc{N}$ are the same as the $\Pi_{\lambda}$-types realised in $\mc{M}$ because $\mc{N}\models\psi$, and so since $\bar{b}\bar{b}'$ does not realise one of the types in $\overline{P_{\bar{a}}}$ it must realise one of the types in $P_{\bar{a}}$, say $p_{\bar{a}\bar{a}'}$. Then $(\mc{M},\bar{a}\bar{a}') \equiv_\lambda (\mc{N},\bar{b}\bar{b}')$. Working backwards through the proof gives that $(\mc{M},\bar{a})\geq_{\lambda+1}(\mc{N},\bar{b})$ and so $\mc{M} \geq_{\lambda+2} \mc{N}$. Thus if $\mc{N} \models \psi \wedge \theta$ then $\mc{M} \geq_{\lambda+2} \mc{N}$.
	\end{proof}
	
	\section{Lower bounds for the definability of the $\alpha$-types of structures}
	
	We now consider lower bounds for the complexity of the sets $\{ \mc{N} : \mc{M} \leq_\alpha \mc{N}\}$ and $\{ \mc{N} : \mc{M} \geq_\alpha \mc{N}\}$.
	We only consider particular values of $\alpha$ and will show how to use these results to prove the complete characterization in Section \ref{sec:jump-inv}.
	We begin by considering small finite values for which the back-and-forth relations are very easy to define.
	We then look at slightly larger finite values that exhibit more general behavior and are therefore suitable base cases for the jump inversion argument in Section \ref{sec:jump-inv}.
	Lastly, we analyze exceptional cases near limit ordinals. In all cases, we have already shown that the sets are of the specified complexity, and so it is just the hardness results that remain.
	
	\subsection{Lower bounds for small finite cases}
	The following three cases are too small to exhibit general behavior, so they are treated separately.

	\
	\begin{lemma}
		There is a structure $\mc{M}$ such that the set
		\[ \{ \mc{N} : \mc{N} \geq_1 \mc{M} \} \]
		is $\bfPi^0_{1}$-complete.
	\end{lemma}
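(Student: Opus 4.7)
The plan is to choose $\mc{M}$ so that its $\Pi_1$ theory is as restrictive as possible in the language, making $\{\mc{N} : \mc{N} \geq_1 \mc{M}\}$ correspond to a singleton under the canonical identification of $\Mod(\mc{L})$ with Cantor space, and then to code branches of an arbitrary tree into this singleton condition. Work in the language $\mc{L} = \{P\}$ with $P$ a single unary relation, and let $\mc{M}$ be the structure with universe $\omega$ and $P^\mc{M} = \emptyset$. The sentence $\forall x\,\neg P(x)$ is $\Pi_1$ and true in $\mc{M}$; moreover, every $\Pi_1$ sentence $\bigdoublewedge_i \forall \bar{x}_i\,\psi_i(\bar{x}_i)$ holding in $\mc{M}$ follows from it, because once we substitute $P(x_j)\mapsto\bot$ each $\psi_i$ reduces to a finitary Boolean combination of equalities, and for such a formula to hold at every tuple in the infinite set $\omega$ it must already be a tautology of equality. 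Hence $\mc{N}\geq_1\mc{M}$ if and only if $P^\mc{N}=\emptyset$, which under the canonical homeomorphism $\Mod(\mc{L}) \cong 2^\omega$ corresponds to the singleton $\{0^\omega\}$.

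To obtain hardness I exhibit a continuous reduction from an arbitrary closed set. Given a closed $A \subseteq 2^\omega$, write $A = [T]$ for some tree $T \subseteq 2^{<\omega}$ and define $f : 2^\omega \to \Mod(\mc{L})$ by $P^{f(x)} = \{n \in \omega : x\upto n \notin T\}$. Each coordinate $P^{f(x)}(n)$ depends only on $x\upto n$, so $f$ is continuous, and $f(x) \geq_1 \mc{M}$ iff $x\upto n \in T$ for every $n$, iff $x \in A$. The same recipe handles $\bfPi^0_1$ sets in $\omega^\omega$ by using trees $T \subseteq \omega^{<\omega}$. Together with the upper bound of Lemma \ref{lem:easy1}, this yields $\bfPi^0_1$-completeness. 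The only nontrivial verification is the description of the $\Pi_1$ theory of $\mc{M}$ sketched above, and no substantive obstacle arises.
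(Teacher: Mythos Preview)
Your proof is correct and follows essentially the same approach as the paper: choose a simple structure $\mc{M}$ in a unary language so that $\mc{N} \geq_1 \mc{M}$ reduces to a single $\Pi_1$ condition, then continuously reduce an arbitrary closed set to that condition. The paper uses $\mc{M}$ with $U$ holding of exactly one element rather than your choice of $P^{\mc{M}} = \emptyset$, and your detour through ``tautologies of equality'' is unnecessary since $P^{\mc{N}} = \emptyset$ already forces $\mc{N} \cong \mc{M}$ in $\Mod(\mc{L})$---but these are cosmetic differences.
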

	
	\begin{proof}
		Let $\mc{M}$ be a structure in the language of one unary relation $U$ where $U$ holds of exactly one element. Given a $\bfPi^0_{1}$ set $A$, we can build for each $x \in 2^\omega$ a structure $\mc{N}^x$ such that $U$ holds of exactly one element in $\mc{N}^x$ (and hence $\mc{N}^x \geq_1 \mc{M}$) if $x \in A$ and $U$ holds of exactly two elements in $\mc{N}^x$ (and hence $\mc{N}^x \ngeq_1 \mc{M}$) if $x \notin A$.
	\end{proof}

	\begin{lemma}\label{lem:Hard1}
		There is a structure $\mc{M}$ such that the set
		\[ \{\mc{N} : \mc{N} \leq_1 \mc{M}\}\]
		is $\bfPi^0_2$-complete.
	\end{lemma}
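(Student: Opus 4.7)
The plan is to take $\mc{M}$ in the language $\{P_n : n \in \omega\}$ of countably many unary predicates, with universe $\{c_n : n \in \omega\}$ and $P_n^{\mc{M}}(c_m)$ holding iff $n = m$, so that $\mc{M}$ realizes exactly the atomic $1$-types ``only $P_n$ holds,'' with a single witness $c_n$ per type. By the classical characterization of $\leq_1$, the condition $\mc{N} \leq_1 \mc{M}$ is equivalent to every atomic type realized in $\mc{M}$ being realized in $\mc{N}$. Because $\mc{M}$ has a single element per $1$-type, every atomic tuple-type of $\mc{M}$ is determined by its sequence of $1$-types together with the equality pattern on the indices, and so $\mc{N} \leq_1 \mc{M}$ collapses to the condition that for every $n$, some element of $\mc{N}$ satisfies $P_n$ and no other $P_m$. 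Securing this collapse, that matching $1$-types suffices to match all tuple-types, is the main subtlety, and it is what the choice of $\mc{M}$ with exactly one element per type is meant to handle.

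For the hardness reduction, fix a $\bfPi^0_2$ set $A \subseteq 2^\omega$ and work relative to an oracle making $A$ be $\Pi^0_2$, noting that the argument relativizes. Write $x \in A \iff \forall n\, \exists m\; R(x,n,m)$ for a computable relation $R$. Build $\mc{N}^x$ with universe $\{d_{n,m} : n,m \in \omega\}$, declaring $P_n(d_{n,m})$ to hold iff $R(x,n,m)$ and every other predicate to fail everywhere. The map $x \mapsto \mc{N}^x$ is continuous, hence a Wadge reduction.

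If $x \in A$, pick for each $n$ a witness $m_n$ with $R(x,n,m_n)$; then $d_{n,m_n}$ has atomic $1$-type ``only $P_n$.'' Given any tuple $(c_{n_1},\dots,c_{n_j}) \in \mc{M}$, the tuple $(d_{n_1,m_{n_1}},\dots,d_{n_j,m_{n_j}})$ in $\mc{N}^x$ has the same $1$-types and the same equality pattern (since $m_{n_i}$ depends only on $n_i$), hence the same atomic type, so $\mc{N}^x \leq_1 \mc{M}$. Conversely, if $x \notin A$, fix $n$ with $\neg \exists m\, R(x,n,m)$; then no element of $\mc{N}^x$ satisfies $P_n$, while $c_n \in \mc{M}$ does, so the $1$-type ``only $P_n$'' is realized in $\mc{M}$ but not in $\mc{N}^x$ and $\mc{N}^x \not\leq_1 \mc{M}$. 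To transfer the hardness to an arbitrary language containing a relation symbol of arity at least $2$, we invoke effective bi-interpretation as in Lemma \ref{lem:uniform}.
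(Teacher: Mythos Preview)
Your proof is correct, though the approach differs from the paper's. The paper works in the language of a single unary predicate $U$, takes $\mc{M}$ to have $U$ infinite and coinfinite, and reduces from the specific $\bfPi^0_2$-complete set $\Inf$ of sequences with infinitely many $1$'s: given $x$, one builds $\mc{N}^x$ so that $U$ has infinitely many points iff $x \in \Inf$, in which case $\mc{N}^x \cong \mc{M}$, and otherwise a finitary existential sentence counting $U$-points separates them. Your construction instead uses infinitely many unary predicates and encodes an arbitrary $\Pi^0_2$ predicate directly, matching tuple-types by hand in the positive case. Both arguments are short; the paper's buys a finite language and an outright isomorphism in the positive case, while yours makes the $\forall\exists$ shape of the reduction explicit. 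One small imprecision: in an infinite language, $\mc{N} \leq_1 \mc{M}$ is not literally equivalent to ``every full atomic type realized in $\mc{M}$ is realized in $\mc{N}$'' (only finite fragments need be matched), so your ``collapse'' sentence in the first paragraph overstates things slightly. But your actual verification does not rely on that equivalence---you produce tuples of the same full atomic type in the positive case, and use a single $\Sigma_1$ sentence $\exists y\, P_n(y)$ in the negative case---so the argument stands. The final remark about transferring to languages with a binary relation is unnecessary, since the lemma as stated does not fix a language.
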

	
	\begin{proof}
		Let $\mc{M}$ be the structure in the language of one unary relation $U$ where $U$ holds of an infinite-coinfinite subset of the elements of $\mc{M}$. The set $\Inf \subseteq 2^\omega$ of strings with infinitely many 1's is $\bfPi^0_2$-complete (see e.g., Section 23.A of \cite{Kechris} for this and other examples of complete sets), so we will reduce it to $\{\mc{N} : \mc{N} \leq_1 \mc{M}\}$. Given $x \in 2^\omega$ we construct $\mc{N}^x$ so that $x\in \Inf \iff \mc{N}^x\leq_1 \mc{M}$. If $\mc{N}^x$ has domain $\mathbb{N}$, let $\mc{N}^x\models U(2i)$ if and only if $x(i)=1$ ($U$ will never hold of any odd elements).
		If $x$ has infinitely many 1's then $\mc{N}^x\cong\mc{M}$, so in particular $\mc{N}^x\leq_1 \mc{M}$.
		If $x$ has finitely many 1's, say $n$ of them, then in $\mc{M}$ at least $n+1$ distinct elements satisfy the relation $U$, but in $\mc{N}^x$ there are only $n$. Thus $\mc{N}^x\not\leq_1 \mc{M}$ as desired.
	\end{proof}
	
	\begin{lemma}\label{lemma:2hardeasy}
		There is a structure $\mc{M}$ such that the set
		\[ \{\mc{N} : \mc{N} \leq_2 \mc{M}\}\]
		is $\bfPi^0_3$-complete.
	\end{lemma}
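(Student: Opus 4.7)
The plan is to generalize Lemma \ref{lem:Hard1} by equipping $\mc{M}$ with countably many independent copies of the $\Pi_2$ witnessing requirement used there. Take $\mc{M}$ to be a structure in the language with countably many unary predicates $U_0, U_1, U_2, \ldots$ such that the $U_i$ are pairwise disjoint, each $U_i$ is satisfied by infinitely many elements, and infinitely many elements lie outside $\bigcup_i U_i$. Working in this language is harmless by the bi-interpretation footnote to Theorem \ref{thm:main-complexity}.

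The set
\[ \mathrm{Fin}^\omega := \bigl\{ x \in 2^{\omega \times \omega} : \forall i \; \{ j : x(i,j) = 1 \} \text{ is infinite} \bigr\} \]
is $\bfPi^0_3$-complete, as a countable intersection of continuous preimages of the $\bfPi^0_2$-complete set $\mathrm{Inf}$, with a standard uniform reduction witnessing hardness. I would reduce it to $\{ \mc{N} : \mc{N} \leq_2 \mc{M}\}$ via the continuous map $x \mapsto \mc{N}^x$, where $\mc{N}^x$ has universe $\{e_{i,j} : i,j \in \omega\} \cup \{z_k : k \in \omega\}$, each $e_{i,j}$ belongs to $U_i$ iff $x(i,j)=1$ and to no other $U_{i'}$, and each $z_k$ lies outside every $U_i$.

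For correctness, if $x \in \mathrm{Fin}^\omega$, each $U_i$ is infinite in $\mc{N}^x$, the $U_i$ remain pairwise disjoint, and the complement of $\bigcup_i U_i$ is infinite, so a routine back-and-forth gives $\mc{N}^x \cong \mc{M}$ and hence $\mc{N}^x \leq_2 \mc{M}$. If $x \notin \mathrm{Fin}^\omega$, some $U_i$ has only finitely many, say $k$, realizers in $\mc{N}^x$, so the $\Sigma_1$ sentence
\[ \exists y_1, \ldots, y_{k+1} \Bigl( \bigwedge_{\ell} U_i(y_\ell) \wedge \bigwedge_{\ell \neq \ell'} y_\ell \neq y_{\ell'} \Bigr) \]
is true in $\mc{M}$ but fails in $\mc{N}^x$. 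By Karp's theorem, $\mc{N}^x \leq_2 \mc{M}$ would force every $\Sigma_2$ (a fortiori every $\Sigma_1$) sentence true in $\mc{M}$ to hold in $\mc{N}^x$, so this gives $\mc{N}^x \not\leq_2 \mc{M}$, as required.

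There is no essential obstacle here: the construction is entirely parallel to Lemma \ref{lem:Hard1}, with the single ``$U$ is infinite'' condition replaced by a countable family of independent such conditions, one per $U_i$. The only bookkeeping points are the standard $\bfPi^0_3$-completeness of $\mathrm{Fin}^\omega$ and the routine back-and-forth isomorphism in the positive case; the direction of preservation used in the negative case is precisely that $\mc{N} \leq_2 \mc{M}$ transports the $\Sigma_2$ theory of $\mc{M}$ into $\mc{N}$.
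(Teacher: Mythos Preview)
Your proof is correct and follows essentially the same strategy as the paper: split the structure into countably many independent ``slots'' (you use pairwise disjoint unary predicates $U_i$; the paper uses sorts carrying linear orders), encode one $\bfPi^0_2$ condition in each slot, and observe that the conjunction over all slots gives a $\bfPi^0_3$-complete set, with isomorphism to $\mc{M}$ in the positive case and a low-complexity distinguishing sentence in the negative case.

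Your version is in fact slightly cleaner than the paper's: the paper builds each sort as a copy of $\omega$ versus $\zeta$ and uses a genuinely $\Sigma_2$ sentence (existence of a least element) to witness $\mc{N}^x\not\leq_2\mc{M}$, whereas you get away with a $\Sigma_1$ sentence (existence of $k{+}1$ elements in $U_i$), which of course suffices since $\Sigma_1\subseteq\Sigma_2$. One cosmetic point: the name $\mathrm{Fin}^\omega$ is misleading given that you define it as ``every row is infinite''---something like $\mathrm{Inf}^\omega$ would match the definition---but the mathematics is unaffected.
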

	\begin{proof}
		Identify $(2^\omega)^\omega$ with all lists $(a_i)_{i \in \omega}$ of infinite binary strings. Let $A \subseteq (2^\omega)^\omega$ be the set of lists $(a_i)_{i \in \omega}$ such that every $a_i$ has finitely many $1$'s. This set is $\bfPi^0_3$-complete.
		
		Consider structures of the following type: they have countably many sorts, labeled by unary relations $R_i$, each sort consisting of a linear ordering.
		Given a structure $\mc{C}$ of this form, we write $\mc{C}_i$ to mean the linear ordering defined by the $R_i$ points.
		We let $\mc{M}$ be the unique structure of this form with $\mc{M}_i\cong\omega$ for all $i$.
		Given $x = (x_i)_{i \in \omega}$ we construct in a continuous way $\mc{N}^x$, also a structure of the above form, so that $x\in A \iff \mc{N}^x\leq_2 \mc{M}$.
		Given $i$, we explain how to construct $\mc{N}^x_{i}$ the $i$th sort of $\mc{N}^x$.
		Start with a copy of $\omega$.
		Whenever we see another 1 in the string $x_i$, add an element below all of the previously added elements.
		If $x_i$ has finitely many 1's, then in $\mc{N}^x_i$ we add only finitely many elements below the initial copy of $\omega$, and so $\mc{N}^x_i \cong \omega$.
		If $a_i$ has infinitely many 1's, then $\mc{N}^x_i=\omega^*+\omega\cong\zeta$.
		If $x\in A$, all of the $x_i$ have finitely many 1's and $\mc{N}^x\cong\mc{M}$, so in particular $\mc{N}^x \leq_2 \mc{M}$.
		On the other hand, if $x\not\in A$, some $x_i$ has infinitely many 1's, so $\mc{N}^x_i\cong\zeta$.
		Let
		\[ \varphi:=\exists y\; \left[ R_i(y) \land \forall z \left( R_i(z)\longrightarrow  z\geq y \right) \right].\]
		Note that $\mc{M}\models \varphi$, yet $\mc{N}^x\models \neg\varphi$, so $\mc{N}^x\nleq_2 \mc{M}$ as desired.
	\end{proof}
	
	\subsection{Critical finite cases}
	
	The next two cases we consider are critical because they exhibit all of the complexity of the behavior of the general case. In Section \ref{sec:jump-inv}, we will use jump inversion to obtain all of the other non-exceptional cases from these two.
	
	\begin{lemma}\label{lem:2Hard}
		There is a structure $\mc{M}$ such that the set
		\[ \{ \mc{N} : \mc{N} \geq_2 \mc{M} \} \]
		is $\bfPi^0_{4}$-complete.
	\end{lemma}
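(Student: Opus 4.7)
The plan is to reduce a $\bfPi^0_4$-complete set continuously to $\{\mc{N} : \mc{M}\leq_2\mc{N}\}$ for a cleverly chosen fixed $\mc{M}$, matching the $\bfPi^0_4$ upper bound from Lemma \ref{lem:aboveGeneral}. I would use the standard $\bfPi^0_4$-complete set
\[ A = \{(x_{k,l})_{k,l\in\omega}\in(2^\omega)^{\omega\times\omega} : \forall k\,\exists l\; x_{k,l}\in\Inf\}, \]
whose quantifier pattern $\forall k\,\exists l\,\forall m\,\exists n$ is designed to mirror the four alternations $\forall \bar d\in\mc{N},\,\exists \bar c\in\mc{M},\,\forall \bar e\in\mc{M},\,\exists \bar f\in\mc{N}$ that appear in the $\leq_2$ back-and-forth game. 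The hope is that each existential/universal choice of an integer in the definition of $A$ is realised by one of the four moves.

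I would build $\mc{M}$ and $\mc{N}^x$ in a language containing unary sort-predicates $R_k$ for $k\in\omega$, an equivalence relation $E$ that partitions each sort into inner classes, and unary color-predicates $P_n$ for $n\in\omega$. The fixed structure $\mc{M}$ is arranged so that each $R_k$-sort contains infinitely many $E$-classes realising a carefully chosen family of finite ``profiles'' (the set of $P_n$'s appearing in a class); the family must be rich enough that Duplicator's choice of a class at her first move amounts to a guess for a witness $l$ to $\exists l$ in $A$. The structure $\mc{N}^x$ would have, in each $R_k$-sort, $E$-classes $C_{k,l}$ indexed by $l\in\omega$, where the profile of $C_{k,l}$ is $\{n : x_{k,l}(n) = 1\}$ (together with infinitely many ``empty'' elements in no $P_n$).

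The central claim to prove is $\mc{M}\leq_2\mc{N}^x \iff x\in A$. For the forward direction, assuming $x\in A$, I would pick for each $k$ a witness $l_k$ with $x_{k,l_k}\in\Inf$ and describe Duplicator's winning strategy: respond to Spoiler's $\bar d\in\mc{N}^x$ by placing $\bar c$ in an $\mc{M}$-class whose finite profile matches $\bar d$'s realised $P_n$-types, and respond to Spoiler's subsequent $\bar e\in\mc{M}$ by locating matching elements in $C_{k,l_k}$, which is possible because $x_{k,l_k}\in\Inf$ guarantees unboundedly many $P_n$-witnesses. For the reverse direction, when $x\notin A$ and some $k$ admits no $l$-witness, I would describe a Spoiler strategy in the $R_k$-sort that plays $\bar d$ spread across enough $C_{k,l}$'s so that whichever $\mc{M}$-classes Duplicator picks, Spoiler can play a subsequent $\bar e\in\mc{M}$ realising a $P_n$-type not available in the corresponding classes $C_{k,l}$ of $\mc{N}^x$.

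The main obstacle will be calibrating $\mc{M}$'s family of class profiles: the profiles must be rich enough to furnish Duplicator's strategy in the forward direction (so that her class-choice genuinely encodes an $l$-witness), yet not so rich that she can evade the need for such a witness in the reverse direction. Once this combinatorial balance is struck, the soundness of the back-and-forth analysis should follow from a sort-by-sort and class-by-class decomposition along the lines of Sublemma \ref{sublemma:complicated} and Sublemma \ref{sublemma:complicated2}, using the former to assemble Duplicator's strategy and the latter to extract Spoiler's winning move.
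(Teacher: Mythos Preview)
Your framework---encoding families of subsets of $\omega$ via equivalence classes carrying ``profiles'' of unary predicates, and reducing $\mc{M}\leq_2\mc{N}^x$ to a set-inclusion condition between the two families---is essentially the same combinatorial idea the paper uses (there with flower graphs and loop-lengths in place of $E$-classes and colors). Where your proposal and the paper diverge is precisely at the obstacle you flag and leave open: the choice of $\mc{M}$'s profile family.

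Your tentative choice of \emph{finite} profiles for $\mc{M}$ will not work, and the reasoning you give for the forward direction already shows why. You write that when $x_{k,l_k}\in\Inf$, Duplicator can respond to Spoiler's $\bar e\in\mc{M}$ by finding matching elements in $C_{k,l_k}$ ``because $x_{k,l_k}\in\Inf$ guarantees unboundedly many $P_n$-witnesses.'' But an infinite profile need not contain any particular finite set: if $\bar e$ lies in an $\mc{M}$-class with profile $\{n\}$ for some $n$ with $x_{k,l_k}(n)=0$, Duplicator cannot respond in $C_{k,l_k}$. Conversely, if $\mc{M}$ contains classes of \emph{every} finite profile (including $\emptyset$), then the inclusion condition ``every $\mc{N}^x$-profile contains some $\mc{M}$-profile'' is trivially satisfied regardless of $x$, so the reduction collapses.

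The paper resolves this by choosing the profiles of $\mc{M}$ to be \emph{infinite}: specifically the column sets $S_i=\{\langle i,j\rangle:j\in\omega\}$, closed under finite additions. The condition ``$T$ contains some $S_i$'' is then genuinely $\bfSigma^0_3$ in an enumeration of $T$ (namely $\exists i\,\forall j\,\langle i,j\rangle\in T$), and ``every $T\in\mc{T}$ contains some $S_i$'' is $\bfPi^0_4$. The paper's $\mc{N}^x$ encodes sets $T_u=\{\langle v,j\rangle:f_{u,v}(j){\downarrow}\}$, so that $S_i\subseteq T_u$ exactly when $f_{u,i}$ is total. The closure under finite additions on both sides is what makes the clean equivalence between $\leq_2$ and the inclusion condition go through (your $\mc{N}^x$ lacks this closure). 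In short, your sketch has the right architecture but the arrow pointing the wrong way: it is $\mc{M}$ that must carry the infinite sets, so that containing one of them is a nontrivial $\bfSigma^0_3$ demand on each component of $\mc{N}^x$.
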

	\begin{proof}
		The structures we work with will be \textit{flower graphs} (sometimes also called bouquet graphs). Let $\mc{S}$ be a non-empty family of subsets of $\omega$. (From now on, all of our families will be assumed to be non-empty.) There is a corresponding flower graph $\mc{G}_{\mc{S}}$. This flower graph will have, for each $S \in \mc{S}$, infinitely many connected components (all isomorphic to each other) consisting of a central vertex with, for each $n \in S$, a loop of length $n$.
		
		This graph encodes what we will call \textit{positive enumerations} of the family $\mc{S}$. A positive enumeration of $\mc{S}$ should be an object from which positive information about $\mc{S}$ can be obtained positively, though one has to express this formally. There are many equivalent definitions, but one is as follows. A positive enumeration of a family is a function $f \colon \omega \times \omega \times \omega \to \{0,1\}$ such that, for each $x,y$, $f(x,y,0) = 0$ and if $f(x,y,s) = 1$ then $f(x,y,t) = 1$ for all $t \geq s$. We set
		\[ \hat{f}(x,y) = \lim_{s \to \infty} f(x,y,s).\]
		Let
		\[ X_i = \{ j : \hat{f}(i,j) = 1\}.\]
		Then $f$ is a positive enumeration of the family $\{ X_i : i \in \omega\}$. Note that the order in which the subsets of $\omega$ appear does not matter, and sets can be repeated. The same set can show up multiple times in the enumeration.
		
		The idea is that $\mc{G}_{\mc{S}}$ has the property, easily verified, that (a) from any enumeration of $\mc{S}$, we can compute a copy of $\mc{G}_{\mc{S}}$, and (b) from any copy of $\mc{G}_{\mc{S}}$ we can enumerate a copy of $\mc{S}$. (In particular, there is a continuous map that takes an enumeration of $\mc{S}$ to a presentation of $\mc{G}_{\mc{S}}$, and a continuous map taking a presentation of $\mc{G}_{\mc{S}}$ to an enumeration of $\mc{S}$.)
		
		We say that a family $\mc{S}$ is \textit{closed under finite additions of elements} if whenever $S \in \mc{S}$ and $F$ is finite, then $S \cup F \in \mc{S}$.
		
		\begin{claim}\label{claim:subsetleq2}
			Let $\mc{S}$ and $\mc{T}$ be two families of subsets of $\omega$, and suppose that both $\mc{S}$ and $\mc{T}$ are closed under finite additions of elements. Then $\mc{G}_{\mc{S}} \leq_2 \mc{G}_{\mc{T}}$ if and only if for each $T \in \mc{T}$ there is $S \in \mc{S}$ such that $S \subseteq T$. 
		\end{claim}
		\begin{proof}
			Suppose that there is $T \in \mc{T}$ such that for all $S \in \mc{S}$ we have $S \nsubseteq T$. We describe a winning strategy for Spoiler in the back-and-forth game to witness $\mc{G}_{\mc{S}} \nleq_2 \mc{G}_{\mc{T}}$. First, Spoiler plays the central vertex $u \in \mc{G}_{\mc{T}}$ of a connected component corresponding to $T$. Say that Duplicator responds with the central vertex $v \in \mc{G}_{\mc{S}}$ of a connected component corresponding to $S \in \mc{S}$. Since $S \nsubseteq T$, there is some $n \in S$ with $n \notin T$. Then Spoiler plays an $n$-cycle in $\mc{G}_{\mc{S}}$ in the connected component of $v$, to which Duplicator cannot respond with an $n$-cycle in the connected component of $u$. If Duplicator responds with a vertex $v \in \mc{G}_{\mc{S}}$ that is not a central vertex, then Spoiler can play a path connecting $v$ to its central vertex $v'$ along with an $n$-cycle connected to $v'$. Again, Duplicator cannot respond with an $n$-cycle in the connected component of $u$. Thus $\mc{G}_{\mc{S}} \nleq_2 \mc{G}_{\mc{T}}$.
			
			Now suppose that for each $T \in \mc{T}$ there is $S \in \mc{S}$ such that $S \subseteq T$. We will describe a winning strategy for Duplicator in the back-and-forth game to witness  $\mc{G}_{\mc{S}} \leq_2 \mc{G}_{\mc{T}}$. On their first play, Spoiler plays elements of $\mc{G}_{\mc{T}}$; we may assume that they play central vertices $u_1,\ldots,u_k$ together with, for each $i$, the elements of finitely many loops in the connected component of $u_i$. For each $i$, let $F_i$ be the finite set of sizes of loops that Spoiler plays connected to $u_i$. For each $i$, let $T_i \in \mc{T}$ be the set coded by the connected component of $u_i$, and choose in $\mc{S}$ a set $S_i$ with $F_i \subseteq S_i \subseteq T_i$. Duplicator will respond, in $\mc{G}_{\mc{S}}$, with vertices $v_1,\ldots,v_k$ chosen such that the connected component of $v_i$ codes $S_i \in \mc{S}$; using the fact that $F_i \subseteq S_i$, Duplicator also responds to each loop played by Spoiler on $u_i$ with a loop of the same size on $v_i$. Now in the second round of the game suppose that Spoiler plays, in $\mc{G}_{\mc{S}}$, finitely many further loops on the $v_i$, together with new central vertices $v_1',\ldots,v_\ell'$ and finitely many loops in their connected components. Since $S_i \subseteq T_i$, Duplicator can respond to each loop on $v_i$ with a loop on $u_i$ of the corresponding size. For each $j$ let $G_j$ be the set of sizes of loops played by Spoiler on $v_j'$, and choose $T_j \supseteq G_j$. Then Duplicator can respond to each $v_j'$ with a central vertex $u_j'$ coding $T_j$, and to the loops played on $v_j'$ with loops of the same lengths on $u_j'$. Thus $\mc{G}_{\mc{S}} \leq_2 \mc{G}_{\mc{T}}$.
		\end{proof}
		
		Let $\mc{S} = \{S_i \cup F: i \in \mathbb{N}, F \in [\mathbb{N}]^{< \omega}\}$ where $S_i = \{ \la i,j \ra \}_{j\in\omega}$. 
		
		\begin{claim}\label{claim:pi4hard}
			The set of enumerations $f$ of families $\mc{T}$ such that for all $T \in \mc{T}$ there is $S \in \mc{S}$ with $S \subseteq T$ is $\bfPi^0_4$-complete. Moreover, given any $\bfPi^0_4$ set $A$, there is a Wadge reduction $x \mapsto f_x$ such that each $f_x$ is an enumeration of a family which is closed under finite additions of elements.
		\end{claim}
		\begin{proof}
			Let $A$ be the $\bfPi^0_4$-complete set of all lists $f = (f_{u,v})_{u,v \in \omega}$ of graphs of functions $f_{u,v}: \omega \to \omega$, with
			\[f \in A \Longleftrightarrow \forall u \exists v \; f_{u,v} \text{ is total}.\]
			Given $f = (f_{u,v})$, we produce an enumeration $g$ of the family
			$\mc{T} = \{ T_{u,F} : u \in \mathbb{N}, F \in [\mathbb{N}]^{< \omega}\}$ where
			\[ T_{u,F} = F \cup \{\la v,j \ra : v \in \mathbb{N} \text{ and }  f_{u,v}(j)\downarrow\}.\]
			We must show that $f \in A$ if and only if for all $T \in \mc{T}$ there is $S \in \mc{S}$ such that $S \subseteq T$.
			
			Say that $f\in A$. In this case, we must show that there is a pair $i \in \mathbb{N}, F' \in [\mathbb{N}]^{< \omega}$ such that $T_{u,F}\supseteq S_i\cup F'$. Let $F=F'$ and take $i$ to be the witness showing that $f_{u,i}$ is total. In particular this means that $ f_{u,i}(j)\downarrow$ always holds so for all $j$, $\la i,j \ra\in T_{u,F}$.
			It follows at once that $T_{u,F}\supseteq S_i\cup F'$ as desired.
			
			Conversely say that there is an $S_i \in \mc{S}$ such that $S_i \subseteq T_{u,\emptyset}$.
			This means that for all $j$, $ f_{u,i}(j)\downarrow$, that is, $f_{u,i}$ is total.
			This holds for any $u$, so we have that $f\in A$, as desired.
		\end{proof}
		
		Together, these two claims finish the proof. We take $\mc{M} = \mc{G}_{\mc{S}}$. We reduce to $\{ \mc{N} : \mc{N} \geq_2 \mc{M}\}$ any $\bfPi^0_4$ set $B$ by producing, for each $x$, an enumeration $f_x$ of a family $\mc{T}_x$ which is closed under finite additions of elements, and from this a presentation of $\mc{G}_{\mc{T}_x}$. Then $\mc{G}_{\mc{T}_x} \geq_2 \mc{G}_{\mc{S}}$ if and only if $x \in B$.
	\end{proof}
	
	\begin{lemma}\label{lem:3hard}
		There is a structure $\mc{M}$ such that
		\[ \{\mc{N} : \mc{N} \leq_3 \mc{M}\}\]
		is $\bfPi^0_6$-complete.
	\end{lemma}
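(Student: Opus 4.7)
Following the strategy of Lemma~\ref{lem:2Hard}, we extend the flower graph construction by an outer equivalence relation, producing a two-level structure whose back-and-forth analysis is governed by Sublemma~\ref{sublemma:complicated} and its converse Sublemma~\ref{sublemma:complicated2}. For each $u \in \omega$, tag the family $\mc{S}$ from Lemma~\ref{lem:2Hard} to obtain $\mc{S}_u = \{\{\la -1, u\ra\} \cup S : S \in \mc{S}\}$, which remains closed under finite additions and for which $\mc{G}_{\mc{S}_u} \not\geq_2 \mc{G}_{\mc{S}_{u'}}$ whenever $u \neq u'$ (because the tag $\la -1, u\ra$ appears in every element of $\mc{S}_u$ but some elements of $\mc{S}_{u'}$ miss it). Let $\mc{M}$ be in the language of a graph together with a binary equivalence relation $E$, and let its $E$-classes consist, for each $u \in \omega$, of infinitely many copies of $\mc{G}_{\mc{S}_u}$, along with filler classes chosen so that Sublemma~\ref{sublemma:complicated}'s matching conditions can be satisfied without disturbing the tag-based distinguishability.

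Given a $\bfPi^0_6$-complete set $A = \{x : \forall u \exists v\, \phi(x,u,v)\}$ with $\phi \in \bfPi^0_4$, invoke Claim~\ref{claim:pi4hard} relativized and uniformly in $(u,v)$ to produce enumerations of families $\mc{T}^{x,u,v}$, also tagged by $u$ and closed under finite additions, with $\mc{G}_{\mc{T}^{x,u,v}} \geq_2 \mc{G}_{\mc{S}_u}$ iff $\phi(x,u,v)$. Define $\mc{N}^x$ to have $E$-classes consisting of infinitely many copies of each $\mc{G}_{\mc{T}^{x,u,v}}$ together with analogous filler classes. The goal is then to show $\mc{N}^x \leq_3 \mc{M}$ iff $x \in A$: the forward implication applies Sublemma~\ref{sublemma:complicated} with $\mc{A} = \mc{M}$, $\mc{B} = \mc{N}^x$, $n = 3$, using the $\phi$-witnessing pairs $(u,v)$ to realise condition~(C) (pairing each class $\mc{G}_{\mc{S}_u}$ of $\mc{M}$ with a suitable $\mc{G}_{\mc{T}^{x,u,v}}$ of $\mc{N}^x$) and the filler classes to realise condition~(D) at the $\geq_2$ level; the reverse implication applies Sublemma~\ref{sublemma:complicated2}, choosing a ``bad'' index $u_0$ with $\forall v\, \neg\phi(x,u_0,v)$ and using the tagging to rule out any class of $\mc{N}^x$ of the form $\mc{G}_{\mc{T}^{x,u,v}}$ being $\geq_2$-above $\mc{G}_{\mc{S}_{u_0}}$ (for $u \neq u_0$ the required tag is missing in some element of $\mc{T}^{x,u,v}$, and for $u = u_0$ the hypothesis $\neg \phi(x,u_0,v)$ rules it out).

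The principal obstacle is designing the filler classes so that both of Sublemma~\ref{sublemma:complicated}'s matching conditions are witnessed in the positive direction without any filler class inadvertently providing a $\geq_2$-match for $\mc{G}_{\mc{S}_{u_0}}$ in $\mc{N}^x$ that would defeat the Sublemma~\ref{sublemma:complicated2} application in the negative direction; this is handled by adding a secondary tag that the ``universal'' filler family lacks but that every $\mc{S}_{u_0}$ contains as a finite-addition obstruction. A secondary technical point is that condition~(C) requires $\mc{M}$-classes to $\geq_3$-dominate matching $\mc{N}^x$-classes, a relation that between flower graphs does not reduce directly to a subset-inclusion condition in the manner of $\geq_2$, so it must be established by an explicit Duplicator strategy leveraging the $\geq_2$-characterization at each subsequent round.
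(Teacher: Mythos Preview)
Your high-level strategy matches the paper's: layer an outer equivalence relation over the flower-graph construction from Lemma~\ref{lem:2Hard} and analyse it via Sublemmas~\ref{sublemma:complicated} and~\ref{sublemma:complicated2}. However, both of the obstacles you flag at the end are genuine, and the paper resolves each of them by a concrete move that your proposal does not supply.

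For the ``secondary technical point'' (condition~(C) of Sublemma~\ref{sublemma:complicated} requires $\mc{G}_{\mc{S}_u} \geq_3 \mc{G}_{\mc{T}^{x,u,v}}$, not merely $\mc{G}_{\mc{T}^{x,u,v}} \geq_2 \mc{G}_{\mc{S}_u}$): this does not follow from a generic Duplicator argument on top of the $\geq_2$ characterisation. The paper first modifies the $\bfPi^0_4$ reduction of Claim~\ref{claim:pi4hard} so that the output family $\mc{T}$ always contains $\mc{S}$ as a subfamily (replace $\mc{T}$ by $\mc{T}\cup\mc{S}$; this does not change whether every $T\in\mc{T}$ contains some $S\in\mc{S}$). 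It then proves as a separate Claim~\ref{claim:geq3copy} that whenever $\mc{S}\subseteq\mc{T}$ and every $T\in\mc{T}$ contains some $S\in\mc{S}$, one gets the stronger relation $\mc{G}_{\mc{S}}\geq_3\mc{G}_{\mc{T}}$, using the embedding $\mc{G}_{\mc{S}}\hookrightarrow\mc{G}_{\mc{T}}$ for Duplicator's first move. Without arranging $\mc{S}\subseteq\mc{T}$ you have no reason to expect $\geq_3$.

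For the ``principal obstacle'' (the filler design): the paper sidesteps your tagging scheme entirely by using explicit unary sort predicates $U_m$ indexed by the outer universal variable, rather than encoding the index inside the flower graphs. Within each sort, $\widetilde{\mc{M}}$ has one equivalence class carrying $\mc{G}_{\mc{S}}$ and all other classes carrying a single filler $\mc{F}=\mc{G}_{\mc{S}_{\fin}}$, the flower graph of all finite subsets of $\omega$. Since every set in $\mc{S}$ is infinite, no finite set contains one, so $\mc{F}\not\geq_2\mc{G}_{\mc{S}}$; since every family $\mc{T}$ produced by the reduction is closed under finite additions, every finite set is contained in some $T\in\mc{T}$, so $\mc{F}\leq_2\mc{G}_{\mc{T}}$ always. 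These are exactly the two conditions your filler must satisfy, and with external sort labels there is no cross-index interference to rule out. Your tagging approach could be made to work, but it forces you to check several extra side conditions (e.g.\ that $\mc{G}_{\mc{T}^{x,u,v}}\not\geq_2\mc{G}_{\mc{S}_{u_0}}$ for $u\neq u_0$ and that fillers in $\mc{N}^x$ are not $\geq_2$ any $\mc{G}_{\mc{S}_{u_0}}$) that the sort-based packaging makes automatic.
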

	\begin{proof}
		We will make use of the construction of Lemma \ref{lem:2Hard}. Let $\mc{S} = \{S_i \cup F: i \in \mathbb{N}, F \in [\mathbb{N}]^{< \omega}\}$, where $S_i = \{ \la i,j \ra \}_{j\in\omega}$, be the family of subsets of $\omega$ defined in Lemma \ref{lem:2Hard} and let $\mc{S}_{\fin}$ be the family of all finite subsets of $\omega$.
		
		In addition to what we proved earlier, we will need two additional facts. First, in Claim \ref{claim:pi4hard} we can replace the family $\mc{T}$ by $\mc{T} \cup \mc{S}$ to assume that $\mc{T}$ contains $\mc{S}$. This modification does not change whether for all $T \in \mc{T}$ there is $S \in \mc{S}$ with $S \subseteq T$. Second, under this assumption, we can improve one direction of Claim \ref{claim:subsetleq2} as follows.
		
		\begin{claim}\label{claim:geq3copy}
			Let $\mc{S}$ and $\mc{T}$ be two families of subsets of $\omega$, and suppose that both $\mc{S}$ and $\mc{T}$ are closed under finite additions of elements. Suppose that $\mc{S} \subseteq \mc{T}$ and that for each $T \in \mc{T}$ there is $S \in \mc{S}$ such that $S \subseteq T$. Then $\mc{G}_{\mc{S}} \geq_3 \mc{G}_{\mc{T}}$.
		\end{claim}
		\begin{proof}
			We give a strategy for Duplicator in the back-and-forth game to witness $\mc{G}_{\mc{S}} \geq_3 \mc{G}_{\mc{T}}$.
			On their first play, Spoiler plays elements $\bar{p}\in\mc{G}_{\mc{S}}$ intersecting connected components coding $S_1,\ldots,S_k$.
			Because $S \subseteq T$, there is a natural embedding $\iota: \mc{G}_{\mc{S}}\to \mc{G}_{\mc{T}}$.
			Duplicator plays $\iota(\bar{p})$ in response.
			We claim that $(\mc{G}_{\mc{S}},\bar{p}) \leq_2 (\mc{G}_{\mc{T}},\iota(\bar{p}))$, so this is a winning move.
			Duplicator will play according to $\iota$ on the connected components coding $S_1,\ldots,S_k$ in each structure.
			This is a winning strategy as $\iota$ restricts to an isomorphism on these components.
			The remaining connected components on each side only finitely differ from $\mc{G}_{\mc{S}}$ and $\mc{G}_{\mc{T}}$.
			In particular, they are isomorphic to the whole of $\mc{G}_{\mc{S}}$ and $\mc{G}_{\mc{T}}$.
			This means that Duplicator can play a winning strategy for $\mc{G}_{\mc{S}} \leq_2\mc{G}_{\mc{T}}$ on these components to win the $(\mc{G}_{\mc{S}},\bar{p}) \leq_2 (\mc{G}_{\mc{T}},\iota(\bar{p}))$ game.
			Under the assumptions provided, such a winning strategy is provided in Claim \ref{claim:subsetleq2}.
		\end{proof}
		
		The following general claim will allow us to conclude the desired result.
		
		\begin{claim}\label{claim:reverseplus2}
			Let $\mc{M}$ be a structure such that for all  $\bfPi^0_\beta$ sets $C$ there is a continuous map $x\mapsto \mc{N}_x$ such that:
			\begin{enumerate}
				\item if $x\in C$ then $\mc{N}_x\leq_{\alpha+1} \mc{M}$,
				\item if $x\not\in C$ then $\mc{N}_x\not\geq_{\alpha} \mc{M}$, and
				\item there is a structure $\mc{F}$ such that for all $x$ $\mc{F}\leq_\alpha N_x$ and $\mc{F}\not\geq_\alpha \mc{M}$.
			\end{enumerate}
			Then there is a structure $\widetilde{\mc{M}}$ such that $\{\widetilde{\mc{N}} : \widetilde{\mc{N}} \leq_{\alpha+1} \widetilde{\mc{M}}\}$ is $\bfPi^0_{\beta+2}$-hard.
		\end{claim}
		
		\begin{proof}
			Let $A$ be a $\bfPi^0_{\beta+2}$ set, and write
			\[ x \in A \Longleftrightarrow \forall m \exists n \; (x,m,n) \in B\]
			where $B$ is $\bfPi^0_\beta$.
			By hypothesis for each $m,n$ there is a structure $\mc{N}^x_{m,n}$ such that
			\[ (x,m,n) \in B \Longrightarrow \mc{N}^x_{m,n} \leq_{\alpha+1} \mc{M}\]
			and
			\[ (x,m,n) \notin B \Longrightarrow  \mc{N}^x_{m,n} \ngeq_\alpha \mc{M}.\]
			Fix also the structure $\mc{F}$. We must define a structure $\widetilde{M}$ and build, for each $x$,
			a structure $\widetilde{\mc{N}}^x$ such that
			\[ x \in A \Longleftrightarrow \widetilde{\mc{N}}^x \leq_{\alpha+1} \widetilde{\mc{M}}.\]
			We work in the language of $\mc{M}$ together with infinitely many fresh unary relations $U_i$ and a fresh equivalence relation $E$.
			Our structures $\widetilde{\mc{M}}$ and $\widetilde{\mc{N}}^x$ will be divided by the $U_i$ into infinitely many distinguished sorts, each of which consists of infinitely many infinite $E$-equivalence classes, with the equivalence classes refining the sorts.
			On each equivalence class, there will be a structure in the language of $\mc{M}$; the choice of structures that appear in each equivalence class of each sort will be the only difference between the different structures.
			For $\widetilde{\mc{M}}$, in each sort $U_n$ we have one equivalence class which is a copy of $\mc{M}$ and each other equivalence class is a copy of $\mc{F}$.
			For $\widetilde{\mc{N}}^x$, in each sort $U_m$, we put as the $2n$th equivalence class a copy of $\mc{N}^x_{m,n}$, and as the $2n+1$th equivalence class a copy of $\mc{F}$.
			
			We claim that $\widetilde{\mc{N}}^x \leq_{\alpha+1} \widetilde{\mc{M}}$ if and only if $x \in A$.
			First, suppose that $\widetilde{\mc{N}}^x \leq_{\alpha+1} \widetilde{\mc{M}}$.
			Given $m$, let $u \in \widetilde{\mc{M}}$ be an element of the sort $U_m$ and within $U_m$ of the equivalence class on which we have a copy of $\mc{M}$.
			There must be $v \in \widetilde{\mc{N}}^x$ such that $(\widetilde{\mc{N}}^x,v) \geq_{\alpha} (\widetilde{\mc{M}},u)$; moreover, $v$ must be in the $m$th sort.
			If $G_u$ is the structure on the equivalence class of $u$, and $G_v$ is the structure on the equivalence class of $v$, then $G_v \geq_{\alpha} G_u$.
			Then $v$ cannot be in the $2n+1$th equivalence class of the $m$th sort of $\mc{N}_x$, since $\mc{F}\not\geq_\alpha \mc{M}$.
			Thus $v$ must be in the $2n$th equivalence class for some $n$, and $G_v \cong \mc{N}^x_{m,n} \geq_\alpha \mc{M}$. 
			By assumption, this implies that $(x,m,n) \in B$.
			So for each $m$ there is $n$ such that $(x,m,n) \in B$, which implies that $x \in A$.
			
			On the other hand, suppose that $x \in A$.
			For each $m$ there is $n$ such that $(x,m,n) \in B$, and so for each $m$ there is $n$ such that $\mc{N}^x_{m,n} \leq_{\alpha+1} \mc{M}$.
			We must show that $\widetilde{\mc{N}}^x \leq_{\alpha+1} \widetilde{\mc{M}}$.
			Since $\mc{M}$ is divided into sorts by the $U_i$, it suffices to show that this is true on each sort.
			Consider the $m$th sort, and fix $n$ such that $\mc{N}^x_{m,n} \leq_{\alpha+1} \mc{M}$.
			For each equivalence class of $\widetilde{\mc{M}}$ (with induced structure $G$), there is an equivalence class of $\widetilde{\mc{N}}^x$ (with induced structure $H$) such that $G \geq_{\alpha+1} H$; for the first equivalence class where $G \cong \mc{M}$ this is the $2n$th equivalence class with $H \cong \mc{N}^x_{m,n}$, and for any other equivalence class with $G \cong \mc{F}$ this is any of the infinitely many equivalence classes with $H \cong \mc{F}$.
			Also, for each equivalence class of $\widetilde{\mc{N}}^x$ (with induced graph $H \cong \mc{N}^x_{m,n}$ or $H \cong \mc{F}$) there are infinitely many equivalence classes of $\mc{M}$ (with induced structure $G \cong \mc{F}$) such that $H \geq_{\alpha} G$.
			This is enough to check that $\widetilde{\mc{N}}^x \leq_{\alpha+1} \widetilde{\mc{M}}$ (see Sublemma \ref{sublemma:complicated}).
		\end{proof}
		
		We now complete the proof of the lemma by using Claim \ref{claim:reverseplus2} with $\alpha=2$.
		By Claim \ref{claim:geq3copy} along with the construction in Lemma \ref{lem:2Hard}, there is a continuous map $x\mapsto \mc{N}_x$ satisfying the first two conditions of Claim \ref{claim:reverseplus2}.
		Therefore, to finish the proof, we need only to construct an $\mc{F}$ that satisfies the last condition of Claim \ref{claim:reverseplus2} relative to this map.
		Let $\mc{F}=\mc{G}_{\mc{S}_{\fin}}$.
		Note that $\mc{N}_x\cong \mc{G}_{\mc{T}}$ where $\mc{T}$ is always closed under finite additions of elements.
		This means that for all finite sets $F$, there is a $T\in \mc{T}$ such that $F\supseteq T$.
		By Claim \ref{claim:subsetleq2}, this means that $\mc{F}\leq_2 \mc{N}_x$.
		Also, $\mc{M}=\mc{G}_{\mc{S}}$ and $\mc{S}$ has only infinite sets; none of these sets are contained in any finite set, so by Claim \ref{claim:subsetleq2}  $\mc{M}\not\leq_2 \mc{F}$, as desired.
		This completes the proof of the lemma.
	\end{proof}
	
	\subsection{Exceptional lower bounds at limit levels}
	The following cases are too close to a limit ordinal to exhibit general behavior, so they are considered separately.
	
	\begin{lemma}
		If $\alpha$ is a limit ordinal, $\mc{M}\leq_\alpha \mc{N}$ if and only if $\mc{M}\geq_\alpha \mc{N}$.
		There is a structure $\mc{M}$ such that the set
		\[ \{\mc{N} : \mc{N} \equiv_\alpha \mc{M}\}\]
		is $\bfPi^0_{\alpha}$-complete.
	\end{lemma}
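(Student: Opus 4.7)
The statement has two parts, and I would address them in turn. For the symmetry $\mc{M} \leq_\alpha \mc{N} \Longleftrightarrow \mc{M} \geq_\alpha \mc{N}$ at limit $\alpha$, I would use that the definition of $\mc{M} \leq_\alpha \mc{N}$ unfolds for limit $\alpha$ to the statement that $\mc{M} \leq_{\beta+1} \mc{N}$ holds for every $\beta < \alpha$. Given any such $\beta$, I would apply the definition of $\mc{M} \leq_{\beta+1} \mc{N}$ to the empty Spoiler play $\bar{d} = \emptyset \in \mc{N}$: this produces some $\bar{c} \in \mc{M}$ with $(\mc{N}, \emptyset) \leq_\beta (\mc{M}, \bar{c})$. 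By Karp's theorem, every $\Pi_\beta$ sentence true in $\mc{N}$ (i.e., true of $\emptyset$ in $\mc{N}$) is true of $\bar{c}$ in $\mc{M}$; since such sentences are closed formulas, this is equivalent to the sentence simply holding in $\mc{M}$. Hence $\mc{N} \leq_\beta \mc{M}$, and varying $\beta < \alpha$ gives $\mc{N} \leq_\alpha \mc{M}$.

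For the hardness, I would fix an increasing cofinal sequence $\gamma_n \nearrow \alpha$. By the hardness results already established (Lemmas \ref{lem:Hard1}, \ref{lemma:2hardeasy}, and for larger finite levels their extensions via the jump-inversion techniques to be developed in Section \ref{sec:jump-inv}), for each $n$ I obtain a structure $\mc{A}_n$, a $\bfPi^0_{\gamma_n}$-hard set $C_n \subseteq 2^\omega$, and a continuous map $y \mapsto \mc{B}_n^y$ such that $\mc{A}_n \cong \mc{B}_n^y$ whenever $y \in C_n$ and $\mc{B}_n^y \not\equiv_{\gamma_n} \mc{A}_n$ whenever $y \notin C_n$. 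Where the hardness constructions at a given level do not directly supply isomorphism in the good case, I would amalgamate each $\mc{A}_n$ with a suitably rigid companion structure whose equivalence type is fully pinned down at level $\gamma_n$, so that good-case agreement at that level already forces $\equiv_\alpha$.

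I would then define $\mc{M} = \bigsqcup_n \mc{A}_n$ as a sort-marked disjoint union, using fresh unary predicates $U_n$ to pick out the $n$th sort and imposing no inter-sort relations. Writing a $\bfPi^0_\alpha$-complete set $V \subseteq 2^\omega$ as $\bigcap_n \pi_n^{-1}(C_n)$ for continuous $\pi_n$, I would set $\mc{N}^x = \bigsqcup_n \mc{B}_n^{\pi_n(x)}$ with the analogous sort structure. The key step is a sort-wise decomposition of back-and-forth: for sort-marked disjoint unions, $\mc{N}^x \leq_\beta \mc{M}$ if and only if $\mc{B}_n^{\pi_n(x)} \leq_\beta \mc{A}_n$ for every $n$, uniformly in $\beta$, since Spoiler may restrict attention to any single sort while Duplicator combines per-sort winning strategies into a global one. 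This is a routine limit-level generalization of Sublemma \ref{sublemma:complicated}. Applying it at level $\alpha$, together with the good/bad dichotomy and the symmetry of $\equiv_\alpha$ established in the first paragraph, yields $\mc{N}^x \equiv_\alpha \mc{M}$ iff $x \in V$, which is the desired Wadge reduction.

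The main obstacle will be guaranteeing full $\equiv_\alpha$-equivalence in the good case at every level $\gamma_n$ of the cofinal sequence. This is transparent when the hardness witness is isomorphism, but at levels where only one-sided back-and-forth is available in the good case, the rigid-companion amalgamation must be chosen carefully to avoid introducing new structure that weakens the hardness at the bad case. The sort-wise decomposition at the limit level, while conceptually routine, also merits careful verification, as the standard combinatorial arguments for finite levels (as in Sublemma \ref{sublemma:complicated}) must be iterated across the cofinal sequence.
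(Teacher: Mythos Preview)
Your argument for the symmetry $\mc{M} \leq_\alpha \mc{N} \Longleftrightarrow \mc{M} \geq_\alpha \mc{N}$ at limit $\alpha$ is correct and is essentially the content of the reference the paper cites.

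For the hardness, your overall strategy matches the paper's exactly: fix a cofinal sequence below $\alpha$, obtain per-level hardness witnesses, and assemble them in a sort-marked disjoint union. The difference lies in the source of the per-level components. The paper invokes the Ash--Knight pair-of-structures theorem directly, using the ordinals $\omega^{\delta_i}$: these have explicit $\Pi_{2\delta_i+1}$ Scott sentences, so the reduction at level $\delta_i$ outputs a structure that is either isomorphic to $\omega^{\delta_i}$ or fails a specific $\Pi_{<\alpha}$ sentence. This immediately gives isomorphism in the good case and $\not\equiv_\alpha$ in the bad case, with no amalgamation step needed.

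Your route through this paper's own finite-level lemmas plus jump inversion also works in principle, but it is more roundabout and, as written, forward-references Section~\ref{sec:jump-inv}. More to the point, the obstacle you flag is not actually present: both Lemma~\ref{lem:Hard1} and Lemma~\ref{lemma:2hardeasy} already yield isomorphism in the good case, and $\Phi_\beta$ preserves isomorphism, so no ``rigid companion'' amalgamation is required. Thus your approach is sound but overcomplicated; the paper's choice of $\omega^{\delta_i}$ via Ash--Knight is precisely the clean shortcut that dissolves the difficulty you anticipate.
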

	
	\begin{proof}
		The first claim is well-known; see for example the proof of Lemma II.67 in \cite{MonBook}.
		
		To see the second claim, let $\delta_i\to\alpha$ be a fundamental sequence.
		By the Ash and Knight Pair of Structures theorem \cite{AK90} the set of models isomorphic to $\omega^{\delta_i}$ is $\bfPi^0_{\delta_i}$-hard.
		Let the language of $\mc{M}$ be $\{<\}$ along with infinitely many unary relations $R_i$.
		Let $\mc{M}$ contain exactly one copy of each $\omega^{\delta_i}$, where $R_i$ holds exactly of the elements in  $\omega^{\delta_i}$.
		
		Given a $\bfPi^0_\alpha$ set $A$, we can write $A=\bigcap_i A_i$ where $A_i$ is $\bfPi^0_{\delta_i}$.
		For $x\in2^\omega$, let $\mc{N}^x_{i}\cong \omega^{\delta_i}$ if and only if $x\in A_i$.
		Note that this can be done in a continuous fashion.
		$\mc{N}^x$ contains exactly a copy of each $\mc{N}^x_{i}$ and $R_i$ holds exactly of the elements in $\mc{N}^x_{i}$.
		As each component of $\mc{N}^x$ is constructed continuously, so is all of $\mc{N}^x$.
		If $x\in A$ it is immediate that $\mc{M}\cong \mc{N}^x$ and so $\mc{M}\equiv_\alpha \mc{N}$.
		If $x\not\in A$, then for some $i$ $\mc{N}^x_{i}\not\cong \omega^{\delta_i}$.
		In particular, $\mc{N}^x$ fails to satisfy the $\Pi_{2\delta_i+1}$ Scott sentence of $\omega^{\delta_i}$ (give explicitly in \cite{MonBook} Lemma II.5) relativized to the predicate $R_i$.
		Thus, $\mc{N}^x\not\equiv_\alpha \mc{M}$, completing the proof.
	\end{proof}

	For the next case, we need to make use of Montalb\'an's tree of structures theorem as seen in \cite{MonAsh},
	We only need a specific case of the theorem (namely, the one where the ``tree'' has only one path that is the length of a limit ordinal), so we will only state the weak version needed.
	
	\begin{theorem}[Montalb\'an \cite{MonAsh}]\label{thm:MonTree}
		Given a limit ordinal $\lambda$ and a fundamental sequence $\delta_i\to\lambda$, let $\{\mc{L}_i\}_{i\in\omega}$ and $\mc{L}_\omega$ be structures with $\mc{L}_i\equiv_{\delta_i+1} \mc{L}_{i+1} \equiv_{\delta_i+1} \mc{L}_\omega$ for all $i$.
		Given a $\bfPi^0_\lambda$ set $A=\bigcap_i A_i$ where each $A_i$ is $\bfPi^0_{\delta_i}$ there is a continuous procedure that given $x$ outputs $\mc{L}_i$ if $i$ is the least such that $x\not\in A_i$ and $\mc{L}_\omega$ if $i\in A$.
	\end{theorem}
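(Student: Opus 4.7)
The plan is to follow the standard Ash-style priority construction for $\alpha$-systems, of the type developed in \cite{AK90} and extended by Montalb\'an. We build $\mc{N}^x$ continuously in $x$ by stages, maintaining at each stage $s$ a guess $g(s) \in \omega \cup \{\omega\}$ for the target structure, together with a finite atomic diagram committing facts about $\mc{N}^x$.

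First, for each $i$ fix a standard continuous tree of $\bfSigma^0_{\delta_i}$ approximations certifying $x \notin A_i$, and set $g(s)$ to be the least $i$ whose approximation has triggered by stage $s$, or $g(s) = \omega$ if none has. Viewing $\omega$ as the top, the function $g$ is non-increasing over stages, and its eventual value is $\omega$ if $x \in A$ and the least $i$ with $x \notin A_i$ otherwise. Second, I would build the atomic diagram of $\mc{N}^x$ by extending at each stage a partial back-and-forth embedding into (a fixed copy of) $\mc{L}_{g(s)}$, of the appropriate back-and-forth depth dictated by $g(s)$. When $g$ eventually stabilizes, the construction dovetails an enumeration of the target, giving $\mc{N}^x \cong \mc{L}_{g(\infty)}$.

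The critical use of the hypothesis is at moments when $g$ drops from some $j$ (or $\omega$) to $i < j$. By transitivity and monotonicity of $\equiv_\alpha$ in $\alpha$, the assumption $\mc{L}_i \equiv_{\delta_i+1} \mc{L}_{i+1} \equiv_{\delta_i+1} \mc{L}_\omega$ upgrades to $\mc{L}_i \equiv_{\delta_i+1} \mc{L}_j$ for every $j > i$ (and $j = \omega$). So the partial diagram built while guessing $j$, viewed as encoding back-and-forth information of depth at most $\delta_i$, extends into $\mc{L}_i$, and we may continue building at depth $\delta_i + 1$ into $\mc{L}_i$ from that point on.

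The main obstacle is the bookkeeping: commitments made while guessing $j$ are naturally at depth $\delta_j + 1$, which is strictly larger than $\delta_i + 1$ when $i < j$, so one must be careful that the partial diagrams at the time of a switch really do lie at the shallower depth governed by $\delta_i + 1$ rather than $\delta_j + 1$. This is exactly what Ash's $\alpha$-system formalism handles: one maintains, at each stage, not a single partial diagram but a tree of candidate extensions indexed by potential back-and-forth histories, pruned by the current guess. The chain of equivalences $\mc{L}_i \equiv_{\delta_i+1} \mc{L}_\omega$ supplies precisely the compatibility needed when pruning to a smaller guess, and continuity of the approximation process guarantees that the entire construction is continuous in $x$.
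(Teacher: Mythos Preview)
The paper does not prove this theorem; it is quoted from Montalb\'an \cite{MonAsh} and used as a black box in the proof of Lemma~\ref{lem:succlimit}. So there is no paper proof to compare against, only the original source.

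Your overall strategy---an Ash-style $\alpha$-system construction---is indeed how Montalb\'an proves the general tree-of-structures theorem. However, there is a concrete error in your sketch. You claim that the guess function $g(s)$ is non-increasing, but this is false. For $\delta_i \geq 2$, the condition $x \notin A_i$ is $\bfSigma^0_{\delta_i}$ and admits no monotone finite-stage approximation: your stage-$s$ belief about whether $x \notin A_i$ can flip infinitely often for those $i$ with $x \in A_i$ in the limit. Consequently $g(s)$ can increase as well as decrease, and the construction must survive both kinds of injury.

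This is not merely ``bookkeeping'' as your final paragraph suggests; it is the entire content of the metatheorem. When $g$ drops from $j$ to $i$ and later rises back, the diagram extensions committed while believing the target is $\mc{L}_i$ must remain compatible with the eventual target $\mc{L}_j$ or $\mc{L}_\omega$. The hypothesis $\mc{L}_i \equiv_{\delta_i+1} \mc{L}_\omega$ is exactly what makes this possible, but organizing the commitments so that an injury at level $i$ only discards $\leq_{\delta_i}$-level information requires the full nested-tree machinery of Ash's metatheorem (or Montalb\'an's true-stages reformulation), not the single linear guess described in your second paragraph. Your last paragraph points at the right tool but does not reconcile it with the incorrect monotonicity claim, and as written the two paragraphs describe incompatible constructions.
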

	
	Note that examples of such $\mc{L}_i$ with the additional property that $M_i\not\equiv_{\delta_i+2}\mc{L}_{i+1}$ always exist; see for example, \cite{MonBook} Example IX.24 where the examples are linear orderings.
	
	\begin{lemma}\label{lem:succlimit}
		If $\alpha=\lambda+1$ where $\lambda$ is a limit ordinal, there is a structure $\mc{M}$ such that the set
		\[ \{\mc{N} \; : \; \mc{N} \geq_\alpha \mc{M}\}\]
		is $\bfPi^0_{\alpha+1}$-complete.
	\end{lemma}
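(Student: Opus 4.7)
The plan is to reduce a $\bfPi^0_{\lambda+2}$-complete set $A$ to $\{\mc{N} : \mc{N} \geq_{\lambda+1} \mc{M}\}$ via a continuous map $x \mapsto \mc{N}^x$. Write $A$ in the form $A = \{x : \forall m \, \exists n\; (x,m,n) \in B\}$ with $B \in \bfPi^0_\lambda$. For each $(m,n)$, Theorem \ref{thm:MonTree} applied to the $\bfPi^0_\lambda$ set $\{x : (x,m,n) \in B\}$ produces continuously a structure $\mc{N}^x_{m,n}$ isomorphic to a fixed $\mc{L}_\omega$ if $(x,m,n) \in B$ and to $\mc{L}_i$ (for the least witnessing $i$) otherwise. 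I take the family $\{\mc{L}_i\}_{i\in\omega}, \mc{L}_\omega$ from the concrete linear-ordering example in \cite{MonBook} Example IX.24, which additionally guarantees $\mc{L}_i \not\equiv_{\delta_i+2} \mc{L}_{i+1}$ on top of $\mc{L}_i \equiv_{\delta_i+1} \mc{L}_\omega$.

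Next I construct $\mc{M}$ and $\mc{N}^x$ as sorted structures in an enriched language with sort predicates $U_m$ ($m \in \omega$) and an equivalence relation $E$ refining the sorts. Each sort of $\mc{M}$ consists of infinitely many $E$-classes, each isomorphic to $\mc{L}_\omega$. Each sort $U_m$ of $\mc{N}^x$ contains, for every $n$, infinitely many $E$-classes isomorphic to $\mc{N}^x_{m,n}$, together with whatever padding (such as infinitely many copies of each $\mc{L}_i$) is needed to make $\mc{N}^x \cong \mc{M}$ when $x \in A$ and to prevent degenerate matches when $x \notin A$. I claim $\mc{N}^x \geq_{\lambda+1} \mc{M}$ if and only if $x \in A$.

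For the forward direction, when $x \in A$ every sort of $\mc{N}^x$ contains infinitely many copies of $\mc{L}_\omega$ alongside any $\mc{L}_i$-classes. A limit-level adaptation of Sublemma \ref{sublemma:complicated}, verified sort-by-sort, shows $\mc{M} \leq_{\lambda+1} \mc{N}^x$: Duplicator in $\mc{N}^x$ responds to Spoiler's plays in $\mc{M}$ using the available $\mc{L}_\omega$-classes. For the reverse direction, when $x \notin A$ there is a bad sort $U_m$ in which $\mc{N}^x$ contains only $\mc{L}_k$-classes for various $k$. Spoiler plays in this sort; after Duplicator's response, Spoiler picks $\gamma = \delta_k + 2 < \lambda$ and plays a tuple in an $\mc{L}_\omega$-class of $\mc{M}$ witnessing the failure of $\mc{L}_\omega \leq_{\delta_k+2} \mc{L}_k$. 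Since sort $U_m$ of $\mc{N}^x$ has no $\mc{L}_\omega$-class, Duplicator cannot complete the game at level $\gamma$, so $\mc{N}^x \not\geq_{\lambda+1} \mc{M}$.

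The hard part will be calibrating the two directions against each other: for $x \in A$, Duplicator's strategy must succeed despite the presence of $\mc{L}_k$-classes in $\mc{N}^x$ (requiring enough preservation from $\mc{L}_k$ to $\mc{L}_\omega$ at level $\lambda$), while for $x \notin A$, Spoiler must be able to exploit the absence of $\mc{L}_\omega$-classes. This balance depends on the concrete ordinal structures chosen and on the specific direction of the non-equivalence $\mc{L}_k \not\equiv_{\delta_k+2} \mc{L}_\omega$. Formalizing the limit-level analog of Sublemma \ref{sublemma:complicated} and tracking the ordinal-arithmetic details of the chosen family from \cite{MonBook} Example IX.24 are the main technical steps to supply.
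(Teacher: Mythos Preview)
Your reduction has a genuine gap that no amount of ``padding'' can repair; the two directions impose contradictory requirements on $\mc{M}$. If $\mc{M}$ has only $\mc{L}_\omega$-classes (as you write), then even for $x\in A$ the structure $\mc{N}^x$ will contain $\mc{L}_k$-classes in sort $U_m$, coming from those $n$ with $(x,m,n)\notin B$. Since $\mc{L}_k\not\equiv_{\delta_k+2}\mc{L}_\omega$, there is a $\Pi_{<\lambda}$ sentence $\theta_k$ true of $\mc{L}_k$ and false of $\mc{L}_\omega$, and then ``$\exists z\in U_m$ whose $E$-class satisfies $\theta_k$'' is a $\Sigma_{\lambda+1}$ sentence true in $\mc{N}^x$ and false in $\mc{M}$, giving $\mc{N}^x\not\geq_{\lambda+1}\mc{M}$. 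Padding $\mc{N}^x$ alone cannot help, as you cannot delete these $\mc{L}_k$-classes. If instead you also pad every sort of $\mc{M}$ with infinitely many copies of each $\mc{L}_i$, the forward direction is saved ($\mc{N}^x\cong\mc{M}$ for $x\in A$), but the reverse direction collapses: for $x\notin A$ the bad sort $U_m$ of $\mc{N}^x$ has all $\mc{L}_i$'s and no $\mc{L}_\omega$, yet one checks $\mc{N}^x\geq_{\lambda+1}\mc{M}$ still holds, since any tuple $\bar d\in\mc{N}^x$ lying in $\mc{L}_{k_j}$-classes is matched by $\bar c$ in the same classes of $\mc{M}$, and the complements remain $\equiv_\lambda$ (for each $\beta<\lambda$ some $\mc{L}_i$ with $\delta_i\ge\beta$ stands in for $\mc{L}_\omega$). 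The fact you want to exploit---that $\mc{N}^x$ \emph{lacks} an $\mc{L}_\omega$-class---is $\Pi_{\lambda+1}$ information about $\mc{N}^x$, which is the wrong side for obstructing $\mc{M}\leq_{\lambda+1}\mc{N}^x$. Your own game description of the reverse direction reflects this confusion: you have Spoiler opening in $\mc{M}$, but in the $\mc{M}\leq_{\lambda+1}\mc{N}^x$ game Spoiler's first move is in $\mc{N}^x$.

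The paper avoids this trap by a different route: it first proves $\bfSigma^0_{\lambda+1}$-hardness of $\{\mc{N}:\mc{N}\geq_{\lambda+1}\mc{K}\}$ for a single structure $\mc{K}$. Writing the $\bfSigma^0_{\lambda+1}$ set as $\bigcup_m C_m$ with $C_m\in\bfPi^0_\lambda$, the tree-of-structures theorem is applied not to each $C_m$ but to the \emph{increasing unions} $A_j=C_0\cup\dots\cup C_j$, producing a sequence $\tau_0\in(\omega+1)^\omega$ for which $x\in B$ iff $\tau_0$ has finitely many finite values. Then $\mc{K}$ is assembled from the $\mc{K}_\sigma$ over all $\sigma$ with finitely many finite values, and $\mc{N}^x$ from all finite modifications of $\tau_0$; these families coincide exactly when $\tau_0$ has finitely many finite values, and otherwise the $\Sigma_{\lambda+1}$ sentence ``some $E$-class contains a copy of $\mc{L}_{\tau_0(i)}$ for every $i$'' witnesses $\mc{N}^x\not\geq_{\lambda+1}\mc{K}$. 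Only after this is a layer of sorts added to pass from $\bfSigma^0_{\lambda+1}$ to $\bfPi^0_{\lambda+2}$.
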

	
	\begin{proof}
		We first construct an example $\mc{K}$ such that  $\{\mc{N} \; : \; \mc{N} \geq_\alpha \mc{K}\}$ is $\bfSigma^0_{\lambda+1}$ complete; we will use such a $\mc{K}$ as a piece of our final construction.
		Let $\{\mc{L}_i\}_{i\in\omega}$ and $\mc{L}_\omega$ be linear orderings with $\mc{L}_i\equiv_{\delta_i+1} \mc{L}_{i+1} \equiv_{\delta_i+1} \mc{L}_\omega$ and $\mc{L}_i\not\equiv_{\delta_i+2}\mc{L}_{i+1}$ for all $i$.
		$\mc{K}$ will be a structure in the signature of two equivalence relations $E$ and $F$ and an ordering $<$.
		The equivalence relation $F$ will refine $E$.
		Given some $\sigma\in(\omega+1)^{\omega}$, let $\mc{K}_\sigma$ be the structure in the language $\{F,<\}$ with infinitely many $F$ equivalence classes, each of which is a linear order. These linear orders will be infinitely many copies of each $\mc{L}_{\sigma(j)}$ for each $j \in \omega$.
		Then define $\mc{K}$ as follows with infinitely many $E$ equivalence classes, each of which is a structure $\mc{K}_\sigma$. For each $\sigma\in(\omega+1)^{\omega}$ with only finitely many finite values, $\mc{K}$ will have infinitely many equivalence classes with a copy of $\mc{K}_\sigma$.
		
		Let $B$ be a $\bfSigma^0_{\lambda+1}$ set.
		Let $x\in B$ if and only if $\exists m ~ x\in C_m$ for a family of $\bfPi^0_{\lambda}$ sets $C_m$.
		Let $A_j=C_0\cup\cdots\cup C_j$.
		For each $j$, write $A_j=\bigcap_i A_{j,i}$ where each $A_{j,i}$ is $\bfPi^0_{\delta_i}$.
		Let $F_j$ be the continuous map from Theorem \ref{thm:MonTree} with respect to the set $A_j$ and let $\mc N^x_j := F_j(x)$.
		Note that $\bigsqcup_{j\in\omega} \mc N^x_{j}$ is isomorphic to $\mc{K}_{\tau_0}$ for some $\tau_0\in(\omega+1)^{\omega}$.
		Let $\{\tau_k\}_{k\in\omega}$ enumerate the set of elements in $(\omega+1)^{\omega}$ that differ from $\tau_0$ on only finitely many inputs.
		Let $\mc{N}^x$ contain infinitely many $E$ equivalence classes, countably many for each $\tau_k$ where the equivalence class corresponding to each $\tau_k$ contains a copy of $\mc{K}_{\tau_k}$.
		Note that $\mc{N}^x$ is constructed by countably many disjoint continuous modifications of $F_j$, and so in particular the map $x\mapsto \mc{N}^x$ is continuous.
		(To modify $F_k$, first designate countably many equivalence classes for each $p\in (\omega+1)^{<\omega}$; then on each equivalence class corresponding to $p$ put a copy of $\mc{L}_{p(\ell)}$ for $\ell\leq|p|$ and have $R_\ell$ hold of those points; for each $r>|p|$ put a copy of $N^x_r$ and have $R_r$ hold of those points.)
		
		We now show that $x\in B$ if and only if $\mc{N}^x\geq_{\lambda+1} \mc{K}$.
		If $x\in B$, eventually there is a witness $m$ such that $x\in C_m$.
		For $j\geq m$, this means that $x\in A_j$.
		In particular, $\tau_0(j)$ for $j\geq m$ is always $\omega$.
		This means that the set of elements of $(\omega+1)^{\omega}$ that are finitely different from $\tau_0$ are exactly those that have finitely many finite values.
		In particular, $\mc{N}^x\cong \mc{K}$, so $\mc{N}^x\geq_{\lambda+1} \mc{K}$ as required.
		Now say that $x\not\in B$.
		This means that for any $j$, $x\not\in A_j$.
		Thus $\tau_0$ has only finite values.
		Consider the formula $\psi:=\exists z ~ \bigwwedge_{i\in\omega} \varphi^{E}_{\tau_0(i)}(z)$, where $\varphi^E_{\tau_0(i)}(z)$ is the $\Pi_{< \lambda}$ sentence that expresses that in the $E$-equivalence class of $z$ there is an $F$-equivalence class $\delta_{\tau_0(i)}+2$-equivalent to $\mc{L}_{\tau_0(i)}$. 
		By construction, $\mc{N}^x\models \psi$; any element of an equivalence class corresponding to $\tau_0$ is a witness.
		However, $\mc{M}\models\lnot\psi$; among the $\mc{L}_j$ the $\delta_{\tau_0(i)}+2$-type of $\mc{L}_{\tau_0(i)}$ isolates it up to isomorphism and no $E$ class of $\mc{M}$ has all of the $\mc{L}_{\tau_0(i)}$ included (in fact, each such $E$ class only has finitely many  $\mc{L}_j$).
		As $\psi$ is a $\Sigma_{\lambda+1}$ formula we see that $\mc{N}^x\not\geq_{\lambda+1} \mc{K}$, as desired.
		
		Finally, we construct the desired $\mc{M}$. Let $D$ be a $\bfPi^0_{\lambda+2}$ set.
		Let $x\in D$ if and only if $\forall n ~ x\in B_n$ for a family of $\bfSigma^0_{\lambda+1}$ sets $B_n$.
		For each $n$ let $\mc{K}_n \cong \mc{K}$ and $\mc{N}^x_n$ be the structures constructed above for $B_n$.
		$\mc{M}$ will be a structure in the signature of $<$, the linear ordering within each $\mc{L}_i$, two equivalence relations $E$ and $F$ along with infinitely many unary predicates  $\{S_n\}_{n\in\omega}$ denoting sorts in the structure.
		$\mc{M}$ contains a copy of each $\mc{K}_n\cong\mc{K}$, with the predicate $S_n$ holding exactly of the elements of $\mc{K}_n$.
		Construct $\widetilde{\mc{N}}_x$ with a copy of each of the structures $\mc{N}^x_n$ for each $n$; let $S_n$ hold exactly of the elements in $\mc{N}^x_n$.
		It is immediate that $\widetilde{\mc{N}}^x\geq_{\lambda+1}\mc{M}$ exactly when $\mc{N}^x_n\geq_{\lambda+1}\mc{K}_n$ for all $n$.
		By the above analysis, this happens exactly when  $\forall n ~ x\in B_n$, that is, $x\in D$, and this procedure is continuous as desired.
	\end{proof}

	\section{Jump inversion}\label{sec:jump-inv}
	
	We now move the results from Lemmas \ref{lem:uniform}, \ref{lem:Hard1}, \ref{lemma:2hardeasy}, \ref{lem:2Hard}, and \ref{lem:3hard} up the Borel hierarchy. To do this, we use the method of jump inversion following the methods and notation from Chapter X.3 of \cite{MonBook}, which itself follows Goncharov, Harizanov, Knight, McCoy, R. Miller, and Solomon \cite{GHKMMS}.
	The results listed in that chapter are unsuitable for our direct use; they lack uniformity in their statement and discuss constructions of computable reductions on indices rather than continuous Wadge reductions.
	Nevertheless, the proof techniques presented in Chapter X.3 of \cite{MonBook} do have the desired level of uniformity, even if the results are not stated in that way, and the arguments relativize in a manner that is suitable for the construction of a Wadge reduction.
	We provide a discussion of these techniques and the notation associated with them.
	
	Fix  $\alpha < \omega_1$. In Chapter X.3 of \cite{MonBook}, one works effectively, but as we are working non-effectively, we generally do not need to worry about issues like ordinal presentations. However, if we wanted, we could work relative to a presentation of $\alpha$. Let $\mc{L}_0$ and $\mc{L}_1$ be rigid and uniformly boldface $\bfDelta^0_{\alpha+1}$ categorical (equivalently, $\Sigma_{\alpha+1}$-atomic or having a $\Pi_{\alpha+2}$ Scott sentence; see \cite{MonBook} Theorems II.23 and VII.21) structures with one binary relation $S$ which are $\equiv_\alpha$-equivalent but with $\mc{L}_0 \nleq_{\alpha+1} \mc{L}_1$ and $\mc{L}_1 \nleq_{\alpha+1} \mc{L}_0$.
	Such structures $\mc{L}_0$ and $\mc{L}_1$ exist by Lemma X.14 of \cite{MonBook} (but such examples were also constructed in \cite{AK90}).
	It follows from the Ash and Knight pair of structures theorem \cite{AK90} that for any $\bfDelta^0_{\alpha+1}$ set $U$, there is a continuous function $x \mapsto \mc{C}_x$ such that
	\[ x \in U \Longrightarrow \mc{C}_x \cong \mc{L}_0\]
	and
	\[ x \notin U \Longrightarrow \mc{C}_x \cong \mc{L}_1.\]
	This exact statement is not observed in the original paper of Ash and Knight, but it is a direct corollary; the first explicit published reference known to the authors for a boldface version of the theorem is \cite{HM} Theorem 2.2. (We also note that the boldface result follows with a small amount of extra work from Louveau and Saint Raymond's separation theorem \cite{LSR}.)
	
	We can use these structures $\mc{L}_0$ and $\mc{L}_1$ to define the following map of structures.
	\begin{definition}
		Given a graph $\mc{A}=(A,E)$ construct $\Phi_\alpha(\mc{A})=(C;A,R)$ as follows in the language with a unary relation $A$ and a 4-ary relation $R$.
		There is one $A$-point in $\Phi_\alpha(\mc{A})$ for each element in $\mc{A}$, and we identify them.
		The non-$A$-points in $\Phi_\alpha(\mc{A})$ are split into disjoint infinite sets $B_{a,b}$ for each pair of $a,b\in A$.
		Given a fixed $a,b \in A$, $R(a,b,\cdot,\cdot)$ will be a binary relation on $B_{a,b}$. If $E(a,b)$ then $(B_{a,b},R(a,b,\cdot,\cdot))$ will be isomorphic to $\mc{L}_0$ and if $\neg E(a,b)$ then $(B_{a,b},R(a,b,\cdot,\cdot))$ will be isomorphic to $\mc{L}_1$.
	\end{definition}
	Intuitively, one replaces each edge with a copy of $\mc{L}_0$ and non-edge with a copy of $\mc{L}_1$.
	In Chapter X.3 of \cite{MonBook}, it is proven that $\Phi_\alpha$ is an infinitary $\Sigma_1$ bi-interpretation with the $\alpha$-canonical structural jump of the image.
	We collect the relevant consequences of this claim along with other observations about this construction in the theorem below.
	
	\begin{theorem}\label{thm:BiInt}
		For each $\alpha\in\omega_1$, the map $\Phi_\alpha$ has the following properties:
		\begin{enumerate}
			\item For every $\Pi_{\alpha+\beta}$ formula $\varphi$ there is a $\Pi_{\beta}$ formula $\varphi^*$ such that $\mc{A}\models \varphi^* \iff \Phi_\alpha(\mc{A})\models \varphi$.
			\item For every $\Pi_{\beta}$ formula $\psi$ there is a $\Pi_{\alpha+\beta}$ formula $\psi_*$ such that $\mc{A}\models \psi \iff \Phi_\alpha(\mc{A})\models \psi_*$.
			\item $\mc{A}\leq_\beta\mc{B}$ if and only if $\Phi_\alpha(\mc{A})\leq_{\alpha+\beta}\Phi_\alpha(\mc{B})$.
		\end{enumerate}
	\end{theorem}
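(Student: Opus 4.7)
The plan is to establish (1) and (2) by simultaneous transfinite induction on $\beta$, with (3) following as a corollary of both via Karp's theorem. This mirrors the strategy of the effective version developed in Chapter X.3 of \cite{MonBook}, adapted here to the boldface / purely infinitary setting.

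The deduction of (3) from (1) and (2) is the quickest step: by Karp's theorem, $\mc{A}\leq_\beta\mc{B}$ is characterized by preservation of $\Pi_\beta$ formulas on corresponding tuples, and analogously $\Phi_\alpha(\mc{A})\leq_{\alpha+\beta}\Phi_\alpha(\mc{B})$ is characterized by preservation of $\Pi_{\alpha+\beta}$ formulas. The translations $\varphi\mapsto\varphi^*$ from (1) and $\psi\mapsto\psi_*$ from (2) convert one preservation condition into the other in both directions, yielding (3) immediately.

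The key structural observation powering the induction is that for each pair $(a,b)\in A^2\subseteq \Phi_\alpha(\mc{A})^2$ the component $(B_{a,b},R(a,b,\cdot,\cdot))$ is isomorphic to $\mc{L}_0$ when $E(a,b)$ holds in $\mc{A}$ and to $\mc{L}_1$ otherwise. Because $\mc{L}_0\equiv_\alpha\mc{L}_1$ but one-sidedly $\mc{L}_0\nleq_{\alpha+1}\mc{L}_1$ (and symmetrically), edge information in $\mc{A}$ lies at exactly complexity level $\alpha+1$ inside $\Phi_\alpha(\mc{A})$: invisible below level $\alpha+1$, and captured at level $\alpha+1$ by the distinguishing sentence provided by the $\Sigma_{\alpha+1}$-atomicity of $\mc{L}_0$ and $\mc{L}_1$. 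The base case of the simultaneous induction shows accordingly that $\Pi_\alpha$-formulas (with parameters ranging over $A$) about $\Phi_\alpha(\mc{A})$ depend only on the atomic type of those parameters in $\mc{A}$, matching $\Pi_0$-data of $\mc{A}$, while the converse direction uses the explicit $\Pi_{\alpha+1}$-level distinguishing sentences to encode atomic $\mc{A}$-facts as formulas in $\Phi_\alpha(\mc{A})$.

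The inductive step treats a $\Pi_{\alpha+\beta+1}$ formula $\varphi(\bar{x})$ on $\Phi_\alpha(\mc{A})$ by writing it as a conjunction of formulas of the form $\forall\bar{y}\,\psi(\bar{x},\bar{y})$ with $\psi$ of strictly smaller rank, and then decomposing each quantified element according to its sort: whether it lies in the $A$-sort, or inside some specific $B_{a,b}$ component (witnessed by the $A$-predicate together with the $4$-ary relation $R$). $A$-sort quantifiers translate directly to quantification in $\mc{A}$, reducing the alternation count by one; quantifiers internal to a $B_{a,b}$ live inside some $\mc{L}_i$ and, by the $\equiv_\alpha$-indistinguishability of $\mc{L}_0,\mc{L}_1$, contribute at most $\alpha$ alternations, already included in the $\alpha+\beta$ budget. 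The translation for (2) proceeds dually, with quantifiers over $\mc{A}$ routed through the $A$-sort predicate and atomic $\mc{A}$-facts encoded via $\Pi_{\alpha+1}$-level probes into the components. The main obstacle will be bookkeeping this case decomposition, since a quantified tuple in $\Phi_\alpha(\mc{A})$ typically mixes $A$-sort elements with elements drawn from several $B_{a,b}$ components, and the reduction has to handle these mixed configurations uniformly without accumulating extra quantifier alternations; once the case split is set up, the interplay between the $\equiv_\alpha$-indistinguishability and the $\Sigma_{\alpha+1}$-atomicity of $\mc{L}_0,\mc{L}_1$ cleanly separates the information that must remain hidden at level $\alpha$ from the information that must become visible at level $\alpha+1$.
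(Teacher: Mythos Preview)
Your derivation of (3) from (1) and (2) via Karp's theorem matches the paper exactly.

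For (1) and (2), however, you take a genuinely different route. The paper does \emph{not} carry out a direct transfinite induction on $\beta$ by sort-decomposing quantifiers. Instead it factors both translations through the $\alpha$-canonical structural jump $\Phi_\alpha(\mc{A})^{(\alpha)}$: first it invokes the $\Sigma_1$ bi-interpretation between $\mc{A}$ and $\Phi_\alpha(\mc{A})^{(\alpha)}$ from Chapter~X.3 of \cite{MonBook} together with the pull-back theorem (Theorem~XI.7 there) to get complexity-preserving translations between $\Pi_\beta$ formulas about $\mc{A}$ and $\Pi_\beta$ formulas about $\Phi_\alpha(\mc{A})^{(\alpha)}$; then it uses the definition of the jump (Morleyization by a fixed countable list of $\Pi_\alpha$ types, uniform across all $\mc{A}$) together with Proposition~11 of \cite{MR23} to pass between $\Pi_\beta$ formulas about the jump and $\Pi_{\alpha+\beta}$ formulas about $\Phi_\alpha(\mc{A})$. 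So the paper's proof is essentially a chain of citations, with the only additional observation being uniformity of the Morleyization in $\mc{A}$.

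Your direct approach is more self-contained and in effect unpacks what the structural-jump machinery encodes. It can be made to work, but the step you call ``bookkeeping'' hides a real issue: to carry the induction you need versions of (1) and (2) for formulas with free variables, and you must decide what to do with free variables lying in the $B_{a,b}$ components, which have no counterpart in $\mc{A}$. The standard resolution---precisely what the jump formalism packages---uses rigidity and $\Sigma_{\alpha+1}$-atomicity of $\mc{L}_0,\mc{L}_1$ to replace each such variable by its $\Sigma_{\alpha+1}$ defining formula over the parameters $a,b$; only after that normalization does your mixed-configuration case split collapse cleanly. You name all the right ingredients, but as written the sentence ``quantifiers internal to a $B_{a,b}$ \dots\ contribute at most $\alpha$ alternations'' is not yet a well-posed statement about the translated formula over $\mc{A}$, and your proposal does not specify the inductive hypothesis precisely enough to see that the alternation count comes out right.
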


	\begin{proof}
		Using Karp's theorem \cite{Karp}, (3) follows directly from (1) and (2). Essentially, (1) and (2) follow directly from the results in Section 2.2 of \cite{MR23} along with Chapter X.3 of \cite{MonBook}.
		To provide more detail, it follows from the existence of the $\Sigma_1$ bi-interpretation and the pull-back theorem (Theorem XI.7 of \cite{MonBook}) that
		\begin{enumerate}
			\item For every $\Pi_{\beta}$ formula $\varphi$ there is a $\Pi_{\beta}$ formula $\varphi^{**}$ such that
			\[\mc{A}\models \varphi^{**} \iff \Phi_\alpha(\mc{A})^{(\alpha)}\models \varphi.\]
			\item For every $\Pi_{\beta}$ formula $\psi$ there is a $\Pi_{\beta}$ formula $\psi_{**}$ such that
			\[\mc{A}\models \psi \iff \Phi_\alpha(\mc{A})^{(\alpha)}\models \psi_{**}.\]
		\end{enumerate}
		From here, it is enough to show that
		\begin{enumerate}
			\item For every $\Pi_{\beta}$ formula $\varphi^{**}$ there is a $\Pi_{\alpha+\beta}$ formula $\varphi^{*}$ such that
			\[ \Phi_\alpha(\mc{A})\models \varphi^{*} \iff \Phi_\alpha(\mc{A})^{(\alpha)}\models \varphi^{**}.\]
			\item For every $\Pi_{\alpha+\beta}$ formula $\psi_{*}$ there is a $\Pi_{\beta}$ formula $\psi_{**}$ such that
			\[ \Phi_\alpha(\mc{A})\models \psi_{**} \iff \Phi_\alpha(\mc{A})^{(\alpha)}\models \psi_{*}.\]
		\end{enumerate}
		The first item follows immediately by the definition of the  $\alpha$-canonical structural jump.
		The second item is shown by transfinite induction, with the only interesting case being the base case.
		We point out the uniformity in Montalb\'an's proof in Chapter X.3 of \cite{MonBook}.
		In particular, over every possible $\mc{A}$ he adds the same countably many $\Pi_\alpha$ types to the language to Morleyize $\Phi_\alpha(\mc{A})$ and obtain $\Phi_\alpha(\mc{A})^{(\alpha)}$.
		As a result, the translation of $\Pi_{\alpha+1}$ formulas about the $\alpha$-canonical structural jump into $\Pi_1$ formulas about the base structure provided in \cite{MR23} Proposition 11 is uniform across all input structures and directly provides our desired base case.
	\end{proof}

	The following lemmas are the key technical inputs needed to iterate our previous results through the ordinals.
	
	\begin{lemma}\label{lem:functoralpha}
		Let $X$ be a Polish space. Given a $\bfSigma^0_{\alpha+1}$-measurable map $F:X\to \Mod(E)$, there is a continuous map $\Phi_\alpha(F):X\to \Mod(A,R)$ such that $\Phi_\alpha(F)(x) \cong \Phi_\alpha(F(x))$.
	\end{lemma}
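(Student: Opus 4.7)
The plan is to build $\Phi_\alpha(F)(x)$ edge-by-edge by applying the uniform boldface Ash--Knight pair-of-structures theorem to each potential edge of the graph $F(x)$ separately, then paste the resulting binary-relation structures together into a single continuous map $X \to \Mod(A,R)$.

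More concretely, fix the underlying set of every graph in $\Mod(E)$ to be $\mathbb{N}$. For each pair $(a,b) \in \mathbb{N}^2$, the set
\[ V_{a,b} := \{ \mc{A} \in \Mod(E) : \mc{A} \models E(a,b)\} \]
is clopen in $\Mod(E)$, so $U_{a,b} := F^{-1}(V_{a,b})$ is both $\bfSigma^0_{\alpha+1}$ (as the preimage of an open set under a $\bfSigma^0_{\alpha+1}$-measurable map) and $\bfPi^0_{\alpha+1}$ (its complement is a preimage of an open set), hence $\bfDelta^0_{\alpha+1}$. The uniform boldface Ash--Knight theorem recalled at the start of this section then yields, for each pair $(a,b)$, a continuous map $x \mapsto \mc{C}_{x,a,b}$ into the space of $\{S\}$-structures on a designated countable infinite set $B_{a,b}$ (with the $B_{a,b}$ chosen pairwise disjoint and disjoint from $\mathbb{N}$) such that $\mc{C}_{x,a,b} \cong \mc{L}_0$ when $x \in U_{a,b}$ and $\mc{C}_{x,a,b} \cong \mc{L}_1$ otherwise.

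I would then define $\Phi_\alpha(F)(x)$ to be the $\{A,R\}$-structure whose $A$-points are $\mathbb{N}$, whose non-$A$-points are $\bigsqcup_{(a,b)} B_{a,b}$, and where for each pair $(a,b)$ the restriction of $R(a,b,\cdot,\cdot)$ to $B_{a,b}$ is the $S$-relation of $\mc{C}_{x,a,b}$ (with all other instances of $R$ empty). By the very definition of $\Phi_\alpha$ on structures, this is isomorphic to $\Phi_\alpha(F(x))$. Continuity of the resulting map is then automatic: each subbasic open set of $\Mod(A,R)$ is determined by a single atomic fact about a finite tuple, whose truth in $\Phi_\alpha(F)(x)$ either does not depend on $x$ (for facts about $A$ alone, or for facts about $R$ whose coordinates do not line up with a single $B_{a,b}$) or depends only on $\mc{C}_{x,a,b}$ for a single pair $(a,b)$, and each such dependence is continuous by construction.

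The main obstacle I foresee is ensuring the Ash--Knight theorem is invoked in the precisely uniform boldface Wadge-reduction form needed here; the original Ash--Knight paper phrases matters in terms of a non-uniform reduction from a single set to a single structure. However, the excerpt has already secured the needed uniform boldface form with a citation to \cite{HM}, so for present purposes this input is free and the rest of the argument is routine bookkeeping in the product topology on $\Mod(A,R)$.
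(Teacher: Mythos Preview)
Your proposal is correct and follows essentially the same approach as the paper: pull back each edge-predicate $E(a,b)$ along $F$ to a $\bfDelta^0_{\alpha+1}$ set, apply the boldface pair-of-structures theorem to obtain a continuous map into $\{\mc{L}_0,\mc{L}_1\}$ for each pair, and assemble these into $\Phi_\alpha(F)(x)$. Your treatment of continuity via subbasic open sets is slightly more explicit than the paper's, but the argument is otherwise the same.
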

	
	\begin{proof}
		Consider a basic clopen set $U_{m,n,i}$ in $\Mod(E)$ for $n,m \in \omega$ and $i \in \{0,1\}$. If $i = 0$, then this is the set of all structures with $m \not E n$, and if $i = 1$, then it is the set of all structures with $m \; E \; n$. Since $F$ is $\bfSigma^0_{\alpha+1}$-measurable, $C_{m,n,i} = F^{-1}(U_{m,n,i})$ must be $\mathbf{\Delta}_{\alpha+1}$.
		Let $G_{m,n}$ be a continuous map, coming from the pair of structures theorem, reducing $(C_{m,n,0},C_{m,n,1})$ to $(\mc{L}_0,\mc{L}_1)$. That is, for each $x \in X$,
		\[ m \not E^{F(x)} n \Longleftrightarrow x \in C_{m,n,0} \Longleftrightarrow G_{m,n}(x) \cong \mc{L}_0\]
		and
		\[ m \; E^{F(x)} \; n \Longleftrightarrow x \in C_{m,n,1} \Longleftrightarrow G_{m,n}(x) \cong \mc{L}_1.\]
		We now define the map $\Phi_\alpha(F)$. Given $x$, $\Phi_\alpha(F)(x)$ will be the structure in the language $\{A,R\}$ which has an infinite set $A$ which we identify with $\mathbb{N}$, and for $m,n \in A$ we have disjoint infinite sets $B_{m,n}$ with $R(m,n,\cdot,\cdot)$ defined on $B_{m,n}$ so that $B_{m,n}$ is isomorphic to $G_{m,n}(x)$. This is a continuous construction since each $G_{m,n}$ is continuous, and $\Phi_\alpha(F)(x) \cong \Phi_\alpha(F(x))$ by choice of the $G_{m,n}$.
	\end{proof}
	
	\begin{lemma}\label{lem:contJumpInversion}
		If a class of graphs $\mathbb{D}$ is $\bfPi_n^0$ hard as an invariant set in $\Mod(E)$ then the class of structures
		\[\Phi_\alpha(\mathbb{D}) := \{ \mc{M} : \text{$\mc{M} \cong \Phi_\alpha(\mc{G})$ for some $\mc{G}\in \mathbb{D}$}\}\]
		is $\bfPi_{\alpha+n}^0$ hard as an invariant set in $\Mod(A,R)$.
		Moreover, $\Phi_\alpha(\mathbb{D})$ is $\bfPi^0_{\alpha+n}$ hard within the image of $\Phi_\alpha$, that is, the maps witnessing these reductions always output structures isomorphic to $\Phi_\alpha(\mc{G})$ for some $\mc{G}$.
	\end{lemma}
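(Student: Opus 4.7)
The strategy is to combine classical descriptive-set-theoretic jump inversion with Lemma \ref{lem:functoralpha}. Given any $\bfPi^0_{\alpha+n}$ set $S$ in a Polish space $X$, I will produce a continuous Wadge reduction of $S$ to $\Phi_\alpha(\mathbb{D})$ whose outputs all lie in the isomorphism-image of $\Phi_\alpha$.

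First, I would invoke the standard representation of $\bfPi^0_{\alpha+n}$ sets: every such $S \subseteq X$ can be written as $S = F^{-1}(T)$ for some $\bfSigma^0_{\alpha+1}$-measurable $F\colon X \to 2^\omega$ and some $\bfPi^0_n$ set $T \subseteq 2^\omega$. The construction is the obvious one: expand $S$ as an $n$-fold alternation of countable unions and intersections over a countable family $\{B_i\}$ of $\bfDelta^0_{\alpha+1}$ sets, then set $F(x) := (\chi_{B_i}(x))_i$. Each coordinate $\chi_{B_i}$ is $\bfSigma^0_{\alpha+1}$-measurable because $B_i, B_i^c \in \bfSigma^0_{\alpha+1}$, and the alternation pattern is absorbed into a $\bfPi^0_n$ subset $T$ of $2^\omega$. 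Using the hypothesized $\bfPi^0_n$-hardness of $\mathbb{D}$, I then choose a continuous reduction $c\colon 2^\omega \to \Mod(E)$ with $c^{-1}(\mathbb{D}) = T$, and set $G := c \circ F\colon X \to \Mod(E)$, which is still $\bfSigma^0_{\alpha+1}$-measurable and satisfies $S = G^{-1}(\mathbb{D})$.

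Next, I apply Lemma \ref{lem:functoralpha} to $G$ to obtain a continuous $\Psi := \Phi_\alpha(G)\colon X \to \Mod(A,R)$ with $\Psi(x) \cong \Phi_\alpha(G(x))$ for every $x$. This will be the desired reduction. If $x \in S$ then $G(x) \in \mathbb{D}$, so $\Phi_\alpha(G(x)) \in \Phi_\alpha(\mathbb{D})$ by definition, and the isomorphism-closure of $\Phi_\alpha(\mathbb{D})$ gives $\Psi(x) \in \Phi_\alpha(\mathbb{D})$. Conversely, if $\Psi(x) \in \Phi_\alpha(\mathbb{D})$ then $\Phi_\alpha(G(x)) \cong \Phi_\alpha(\mc{H})$ for some $\mc{H} \in \mathbb{D}$. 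By Theorem \ref{thm:BiInt}(3) combined with the classical fact that isomorphism of countable structures coincides with $\bigcap_\beta \equiv_\beta$ (Scott's theorem), $\Phi_\alpha$ is injective on isomorphism classes of countable structures, so $G(x) \cong \mc{H}$, whence $G(x) \in \mathbb{D}$ by invariance and $x \in S$. The ``moreover'' clause holds automatically since $\Psi(x) \cong \Phi_\alpha(G(x))$ always lies in the isomorphism-image of $\Phi_\alpha$.

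The main obstacle is the jump-inversion representation of $\bfPi^0_{\alpha+n}$ sets in the first step. This is classical (cf.\ Kechris, \emph{Classical Descriptive Set Theory}, Chapter 24) but requires some care at limit ordinals; everything afterward is a direct chase through Lemma \ref{lem:functoralpha} and Theorem \ref{thm:BiInt}.
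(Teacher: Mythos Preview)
Your proof is correct and follows essentially the same route as the paper. The only presentational difference is in how you obtain the $\bfSigma^0_{\alpha+1}$-measurable map $G$: the paper refines the topology on $X$ so that all $\bfDelta^0_{\alpha+1}$ sets become clopen (making $S$ a $\bfPi^0_n$ set in the new topology, to which the hardness hypothesis applies directly), whereas you factor explicitly through $2^\omega$ via the characteristic functions of a generating family of $\bfDelta^0_{\alpha+1}$ sets. These are two standard, equivalent packagings of the same descriptive-set-theoretic fact. You are also more explicit than the paper about why $\Phi_\alpha(G(x)) \in \Phi_\alpha(\mathbb{D})$ forces $G(x) \in \mathbb{D}$, correctly deriving injectivity of $\Phi_\alpha$ on isomorphism classes from Theorem~\ref{thm:BiInt}(3); the paper leaves this implicit in its chain of equivalences.
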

	\begin{proof}
		Let $S$ be a $\bfPi^0_{\alpha+n}$ subset of a Polish space $X$.
		We describe a continuous reduction from $S$ to $\Phi_\alpha(\mathbb{D})$, which will immediately yield the desired result.
		Let $S$ be $\Pi^0_{\alpha+n}$ relative to the parameter $Y$.
		Apply a change of topology to $X$ to make all $\Delta^0_{\alpha+1}(Y)$ sets clopen.
		In particular, $S$ is a $\bfPi_n^0$ set in this new topology.
		Let $G$ be the continuous map reducing $S$ to $\mathbb{D}$.
		Returning to the viewpoint of the original topology, $G: X \to \Mod(E)$ is a $\bfSigma^0_{\alpha+1}$-measurable map.
		By Lemma \ref{lem:functoralpha}, there is a continuous map $\Phi_{\alpha}(G): X \to \Mod(A,R)$ such that $\Phi_\alpha(G)(x) \cong \Phi_\alpha(G(x))$.
		Note that
		\[x\in S \iff G(x)\in\mathbb{D} \iff \Phi_\alpha(G(x))\in \Phi_\alpha(\mathbb{D}) \iff \Phi_\alpha(G)(x)\in \Phi_\alpha(\mathbb{D}).\]
		Therefore, $\Phi_\alpha(G)$ is the required continuous map witnessing the reduction.
	\end{proof}
	
	We are now ready to generalize Lemmas \ref{lem:2Hard} and \ref{lem:3hard} which give the general-case lower bounds.
	
	\begin{lemma}\label{cor:abovealpha}
		For $\alpha$ not a limit ordinal or successor of a limit ordinal, there is a structure $\mc{M}$ such that the set
		\[\{\mc{N} \; : \; \mc{N} \geq_\alpha \mc{M}\}\] is $\bfPi^0_{\alpha+2}$-complete.
	\end{lemma}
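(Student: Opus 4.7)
The plan is to iterate Lemma \ref{lem:2Hard} via the jump-inversion functor $\Phi_\beta$, with $\alpha = \beta + 2$. The hypothesis on $\alpha$ forces $\alpha = \beta+2$ for some ordinal $\beta \ge 0$; the $\beta = 0$ case is exactly Lemma \ref{lem:2Hard}, so I would assume $\beta \ge 1$. Let $\mc{M}_0$ be the flower graph supplied by Lemma \ref{lem:2Hard}, and set $\mc{M} := \Phi_\beta(\mc{M}_0)$. The upper bound is immediate from Lemma \ref{lem:aboveGeneral}, so only hardness remains.

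For hardness, I would apply Lemma \ref{lem:contJumpInversion} to the isomorphism-invariant class
\[ \mathbb{D} := \{\mc{N}_0 \in \Mod(E) : \mc{N}_0 \geq_2 \mc{M}_0\}, \]
which is $\bfPi^0_4$-hard by Lemma \ref{lem:2Hard} (the flower graphs live in the graph language, so Lemma \ref{lem:contJumpInversion} does apply). Its conclusion is that $\Phi_\beta(\mathbb{D})$ is $\bfPi^0_{\beta+4} = \bfPi^0_{\alpha+2}$-hard in $\Mod(A,R)$, with every witnessing continuous reduction landing inside the image of $\Phi_\beta$. Theorem \ref{thm:BiInt}(3) then identifies $\Phi_\beta(\mathbb{D})$ with the slice I care about: a structure of the form $\Phi_\beta(\mc{N}_0)$ lies in $\Phi_\beta(\mathbb{D})$ iff $\mc{N}_0 \geq_2 \mc{M}_0$ iff $\Phi_\beta(\mc{N}_0) \geq_{\beta+2} \Phi_\beta(\mc{M}_0) = \mc{M}$. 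Hence each reduction $x \mapsto \Phi_\beta(G(x))$ furnished by Lemma \ref{lem:contJumpInversion} already satisfies $x \in A \iff \Phi_\beta(G(x)) \geq_\alpha \mc{M}$, which is the desired continuous Wadge reduction of an arbitrary $\bfPi^0_{\alpha+2}$ set $A$ into $\{\mc{N} : \mc{N} \geq_\alpha \mc{M}\}$.

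I do not anticipate any real obstacle: Lemma \ref{lem:contJumpInversion} and Theorem \ref{thm:BiInt}(3) were developed precisely to perform this kind of level-shift, and the only bookkeeping is to verify that $\mathbb{D}$ is an invariant class of graphs and that the ``image of $\Phi_\beta$'' clause of Lemma \ref{lem:contJumpInversion} lines up with the condition $\mc{N} \geq_\alpha \mc{M}$, both of which are immediate. If one prefers to avoid invoking Lemma \ref{lem:contJumpInversion} as a black box, the same proof can be written out by hand: pass to a change of topology making all $\bfDelta^0_{\beta+1}$ sets clopen (turning a given $\bfPi^0_{\alpha+2}$ set into a $\bfPi^0_4$ set), feed this into Lemma \ref{lem:2Hard} to get a map $G$ that is $\bfSigma^0_{\beta+1}$-measurable in the original topology, and then apply Lemma \ref{lem:functoralpha} to produce the continuous lift $\Phi_\beta(G)$.
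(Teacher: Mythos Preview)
Your proposal is correct and follows essentially the same approach as the paper: take the base structure from Lemma~\ref{lem:2Hard}, apply the jump-inversion functor $\Phi_\beta$ with $\alpha=\beta+2$, invoke Lemma~\ref{lem:contJumpInversion} for hardness (using that the reductions land in $\mathrm{Im}(\Phi_\beta)$), and use Theorem~\ref{thm:BiInt}(3) to identify $\Phi_\beta(\mathbb{D})$ with $\{\mc{N}:\mc{N}\geq_\alpha\mc{M}\}$ on that image. Your optional unwrapping via a change of topology and Lemma~\ref{lem:functoralpha} is exactly the content of the proof of Lemma~\ref{lem:contJumpInversion} itself.
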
	
	\begin{proof}
		Let $\mc{M}$ be the graph from Lemma \ref{lem:2Hard} such that $\{\mc{N} : \mc{N} \geq_2 \mc{M}\}$ is  $\bfPi^0_{4}$-complete. If $\alpha = 2$, we are done; otherwise, if $\alpha=\beta+2> 2$, we argue that for the jump inverse $\Phi_\beta(\mc{M})$ we have that  $\{\mc{N} : \mc{N} \geq_\alpha \Phi_\beta(\mc{M})\}$ is  $\bfPi^0_{\alpha+2}$-hard.
		
		In $\Mod(A,R)$ let $\text{Im}(\Phi_\beta)$ be the (closure under isomorphisms) of the image of $\Phi_\beta$. By Theorem \ref{thm:BiInt} we have $\mc{N} \geq_2 \mc{M}$ if and only if $\Phi_\beta(\mc{N}) \geq_{\alpha} \Phi_\beta(\mc{M})$. Thus
		\[ \{\mc{N} : \mc{N} \geq_\alpha \Phi_\beta(\mc{M})\} \cap \text{Im}(\Phi_\beta) = \{ \mc{N}^* : \text{$\mc{N}^* \cong \Phi_\beta(\mc{N})$ for some $\mc{N} \geq_2 \mc{M}$}\}.\]
		By Lemma \ref{lem:contJumpInversion} this latter set is $\bfPi^0_{\beta+4}$ hard, and moreover it is $\bfPi^0_{\beta+4}$ hard within $\text{Im}(\Phi_\beta)$. Thus
		\[ \{\mc{N} : \mc{N} \geq_\alpha \Phi_\beta(\mc{M})\} \]
		is $\bfPi^0_{\beta+4}$-hard or, equivalently, $\bfPi^0_{\alpha+2}$-hard.
	\end{proof}
	
	\begin{lemma}
		For $\alpha=\beta+3$, there is a structure $\mc{M}$ such that the set
		\[ \{\mc{N} : \mc{N} \leq_\alpha \mc{M}\}\]
		is $\bfPi^0_{\alpha+3}$-complete.
	\end{lemma}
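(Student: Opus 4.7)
The plan is to mirror the proof of Lemma \ref{cor:abovealpha}, using the jump inversion machinery of Theorem \ref{thm:BiInt} and Lemma \ref{lem:contJumpInversion}, starting from the base case $\alpha = 3$ provided by Lemma \ref{lem:3hard}. That lemma produces a graph $\mc{M}_0$ (a flower graph in the sense of Lemma \ref{lem:2Hard}) such that $\{\mc{N} : \mc{N} \leq_3 \mc{M}_0\}$ is $\bfPi^0_6$-complete. When $\alpha = 3$ (so $\beta = 0$), we are done; otherwise set $\mc{M} := \Phi_\beta(\mc{M}_0)$ in the language $\{A,R\}$.

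By Theorem \ref{thm:BiInt}(3), the $\beta$-th jump inversion shifts back-and-forth ranks by $\beta$: for every structure $\mc{N}$ in the language of $\mc{M}_0$,
\[ \mc{N} \leq_3 \mc{M}_0 \iff \Phi_\beta(\mc{N}) \leq_{\beta+3} \Phi_\beta(\mc{M}_0) \iff \Phi_\beta(\mc{N}) \leq_\alpha \mc{M}. \]
Consequently,
\[ \{\mc{N}^* \in \Mod(A,R) : \mc{N}^* \leq_\alpha \mc{M}\} \cap \mathrm{Im}(\Phi_\beta) = \Phi_\beta\bigl(\{\mc{N} : \mc{N} \leq_3 \mc{M}_0\}\bigr), \]
where $\mathrm{Im}(\Phi_\beta)$ denotes the closure under isomorphism of the image of $\Phi_\beta$.

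Now apply Lemma \ref{lem:contJumpInversion} to the invariant class $\mathbb{D} := \{\mc{N} : \mc{N} \leq_3 \mc{M}_0\}$, which is $\bfPi^0_6$-hard. The lemma yields that $\Phi_\beta(\mathbb{D})$ is $\bfPi^0_{\beta+6}$-hard, and moreover the continuous reductions witnessing this land inside $\mathrm{Im}(\Phi_\beta)$. By the displayed identity above, this $\bfPi^0_{\beta+6}$-hardness transfers to $\{\mc{N}^* : \mc{N}^* \leq_\alpha \mc{M}\}$. Since $\beta + 6 = (\beta + 3) + 3 = \alpha + 3$, we conclude that the set is $\bfPi^0_{\alpha+3}$-hard; the matching upper bound was already established in Lemma \ref{lem:belowGeneral}, giving completeness.

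The only potentially delicate point is that Lemma \ref{lem:contJumpInversion} is stated for classes of graphs in $\Mod(E)$, whereas Lemma \ref{lem:3hard} produces structures in the flower-graph language. Since flower graphs are themselves graphs (a connected component consisting of a central vertex with loops of prescribed lengths attached is a perfectly good graph), the hypothesis of Lemma \ref{lem:contJumpInversion} is satisfied directly; alternatively, one can invoke the effective bi-interpretation comment from the footnote to Theorem \ref{thm:main-complexity} to move between any language with a binary relation and the graph language without affecting back-and-forth ranks. I expect this bookkeeping to be the only step requiring care; the core argument is a clean application of Theorem \ref{thm:BiInt} and Lemma \ref{lem:contJumpInversion} to the $\alpha=3$ base case.
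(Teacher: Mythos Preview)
Your proposal is correct and follows essentially the same approach as the paper, which simply sets $\mc{M} := \Phi_\beta(\mc{M}_0)$ for the structure $\mc{M}_0$ of Lemma~\ref{lem:3hard} and defers to the argument of Lemma~\ref{cor:abovealpha}. One small inaccuracy: the structure produced in Lemma~\ref{lem:3hard} is \emph{not} itself a flower graph but rather the $\widetilde{\mc{M}}$ of Claim~\ref{claim:reverseplus2}, built from flower graphs together with sort predicates $U_i$ and an equivalence relation $E$; however, your fallback to effective bi-interpretation into the graph language handles this regardless, just as the paper's ``abusing notation, we can assume that $\mc{M}$ is a graph'' does.
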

	
	\begin{proof}
		Let $\mc{M}$ be the structure from Lemma \ref{lem:3hard} such that $\{\mc{N} : \mc{N} \leq_3 \mc{M}\}$ is  $\bfPi^0_{6}$-complete. Abusing notation, we can assume that $\mc{M}$ is a graph up to effective bi-interpretation. If $\alpha = 3$, we are done; otherwise, if $\alpha =\beta+3>3$, we argue that for the jump inverse $\Phi_\beta(\mc{M})$ we have that  $\{\mc{N} : \mc{N} \leq_\alpha \Phi_\beta(\mc{M})\}$ is  $\bfPi^0_{\alpha+3}$-hard. The argument is the same as in Lemma \ref{cor:abovealpha}.
	\end{proof}
	
	We also generalize the exceptional cases Lemmas \ref{lem:Hard1} and \ref{lemma:2hardeasy} to obtain lower bounds for the remaining exceptional cases close to limit ordinals.
	
	\begin{lemma}
		If $\lambda$ is a limit ordinal there is a structure $\mc{M}$ such that the set
		\[ \{ \mc{N} : \mc{M} \geq_{\lambda+2} \mc{N} \} \]
		is $\bfPi^0_{\lambda+3}$-complete.
	\end{lemma}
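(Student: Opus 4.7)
The plan is to reduce to the finite case $\alpha = 2$ via jump inversion, exactly mirroring the argument of Lemma \ref{cor:abovealpha}. Recall we have already established the upper bound, so only $\bfPi^0_{\lambda+3}$-hardness remains. The starting point will be Lemma \ref{lemma:2hardeasy}, which produces a structure $\mc{M}_0$ such that $\{\mc{N} : \mc{N} \leq_2 \mc{M}_0\}$ is $\bfPi^0_3$-complete; up to effective bi-interpretation we may assume $\mc{M}_0$ is a graph so that $\Phi_\lambda$ applies to it.

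Set $\mc{M} := \Phi_\lambda(\mc{M}_0)$. By Theorem \ref{thm:BiInt}(3), for every structure $\mc{N}$ in the domain of $\Phi_\lambda$ we have
\[ \mc{N} \leq_2 \mc{M}_0 \;\Longleftrightarrow\; \Phi_\lambda(\mc{N}) \leq_{\lambda+2} \Phi_\lambda(\mc{M}_0) = \mc{M}.\]
Writing $\text{Im}(\Phi_\lambda)$ for the closure under isomorphism of the image of $\Phi_\lambda$, this yields the identity of invariant classes
\[ \{\mc{N}^* : \mc{N}^* \leq_{\lambda+2} \mc{M}\} \cap \text{Im}(\Phi_\lambda) \;=\; \{\mc{N}^* : \mc{N}^* \cong \Phi_\lambda(\mc{N}) \text{ for some } \mc{N} \leq_2 \mc{M}_0\}.\]

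Now apply Lemma \ref{lem:contJumpInversion} with $\mathbb{D} = \{\mc{N} : \mc{N} \leq_2 \mc{M}_0\}$ and $n = 3$: the image class $\Phi_\lambda(\mathbb{D})$ is $\bfPi^0_{\lambda+3}$-hard, and moreover the Wadge reductions witnessing hardness land inside $\text{Im}(\Phi_\lambda)$. Combining this with the displayed equality, the set $\{\mc{N}^* : \mc{N}^* \leq_{\lambda+2} \mc{M}\}$ is $\bfPi^0_{\lambda+3}$-hard, completing the proof.

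The only subtlety is bookkeeping the two ``sides'' of the $\leq_{\lambda+2}$ relation correctly, since $\mc{M} \geq_{\lambda+2} \mc{N}$ means $\mc{N} \leq_{\lambda+2} \mc{M}$; but since $\mc{M}$ is fixed and we vary $\mc{N}$, the hardness argument is unchanged. I do not expect any new obstacle here beyond those already handled in Lemma \ref{cor:abovealpha}: the combinatorial work lives in the base case (Lemma \ref{lemma:2hardeasy}) and the jump inversion machinery (Theorem \ref{thm:BiInt} and Lemma \ref{lem:contJumpInversion}), both of which are already in place.
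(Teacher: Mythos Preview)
Your proposal is correct and follows essentially the same approach as the paper: start from Lemma~\ref{lemma:2hardeasy} (the $\bfPi^0_3$-completeness of $\{\mc{N} : \mc{N} \leq_2 \mc{M}_0\}$), treat $\mc{M}_0$ as a graph via effective bi-interpretation, and push through $\Phi_\lambda$ using Theorem~\ref{thm:BiInt}(3) and Lemma~\ref{lem:contJumpInversion}. Your write-up is in fact more careful about the direction of the inequality than the paper's own terse sketch.
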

	
	\begin{proof}
		Let $\mc{M}$ be structure from Lemma \ref{lemma:2hardeasy} such that $\{\mc{N} : \mc{N} \geq_2 \mc{M}\}$ is  $\bfPi^0_{3}$-complete. Abusing notation, we can assume that $\mc{M}$ is a graph up to effective bi-interpretation. If $\alpha =\lambda+2$ for our limit ordinal $\lambda$, we argue that for the jump inverse $\Phi_\lambda(\mc{M})$ we have that  $\{\mc{N} : \mc{N} \geq_\alpha \Phi_\lambda(\mc{M})\}$ is $\bfPi^0_{\alpha+1}$-hard. This is argued exactly the same as in Lemma \ref{cor:abovealpha}.
	\end{proof}
	
	\begin{lemma}\label{lem:lambdaplusone}
		If $\alpha=\lambda+1$ where $\lambda$ is a limit ordinal, there is a structure $\mc{M}$ such that the set
		\[ \{\mc{N} \; : \; \mc{N} \leq_\alpha \mc{M}\}\]
		is $\bfPi^0_{\alpha+1}$-complete.
	\end{lemma}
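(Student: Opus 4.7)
The plan is to mirror the strategy of Lemma \ref{cor:abovealpha}, bootstrapping from the small finite base case of Lemma \ref{lem:Hard1} via the jump-inversion machinery of Theorem \ref{thm:BiInt} and Lemma \ref{lem:contJumpInversion}. Specifically, let $\mc{K}$ be the structure from Lemma \ref{lem:Hard1} for which $\{\mc{N} : \mc{N} \leq_1 \mc{K}\}$ is $\bfPi^0_2$-complete; abusing notation, we may assume $\mc{K}$ is a graph up to effective bi-interpretation. Take $\mc{M} := \Phi_\lambda(\mc{K})$.

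For the lower bound, by Theorem \ref{thm:BiInt}(3) we have, for any graph $\mc{N}$,
\[ \mc{N} \leq_1 \mc{K} \iff \Phi_\lambda(\mc{N}) \leq_{\lambda+1} \Phi_\lambda(\mc{K}) = \mc{M}. \]
Hence within $\text{Im}(\Phi_\lambda) \subseteq \Mod(A,R)$,
\[ \{\mc{N}^* : \mc{N}^* \leq_{\lambda+1} \mc{M}\} \cap \text{Im}(\Phi_\lambda) = \{\mc{N}^* : \mc{N}^* \cong \Phi_\lambda(\mc{N}) \text{ for some } \mc{N} \leq_1 \mc{K}\}. \]
Since $\{\mc{N} : \mc{N} \leq_1 \mc{K}\}$ is an invariant $\bfPi^0_2$-hard subset of $\Mod(E)$, Lemma \ref{lem:contJumpInversion} (with $n = 2$) applies to show that the right-hand side is $\bfPi^0_{\lambda+2}$-hard, and moreover hard within $\text{Im}(\Phi_\lambda)$. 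Consequently, $\{\mc{N} : \mc{N} \leq_{\lambda+1} \mc{M}\}$ is itself $\bfPi^0_{\lambda+2}$-hard, which is precisely the desired $\bfPi^0_{\alpha+1}$ lower bound since $\alpha = \lambda + 1$.

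There is no serious obstacle: the argument is essentially mechanical once the base case (Lemma \ref{lem:Hard1}) is combined with the jump-inversion functor. The only minor verification is that the base class is invariant under isomorphism, so that Lemma \ref{lem:contJumpInversion} applies; this is immediate because the relation $\leq_1$ respects isomorphism. The matching upper bound is already established in Lemma \ref{lem:succlimitupper}(3), so completeness follows.
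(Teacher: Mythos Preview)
Your proposal is correct and follows essentially the same approach as the paper: both take the structure from Lemma~\ref{lem:Hard1}, regard it as a graph, apply $\Phi_\lambda$, and invoke Theorem~\ref{thm:BiInt}(3) together with Lemma~\ref{lem:contJumpInversion} exactly as in Lemma~\ref{cor:abovealpha}. Your write-up simply spells out the details that the paper abbreviates by pointing back to that earlier argument.
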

	
	\begin{proof}
		Let $\mc{M}$ be the structure from Lemma \ref{lem:Hard1} such that $\{\mc{N} \; : \; \mc{N} \leq_1 \mc{M}\}$ is  $\bfPi^0_{2}$-complete. Abusing notation, we can assume that $\mc{M}$ is a graph up to effective bi-interpretation. If $\alpha =\lambda+1$ for limit ordinal $\lambda$, we argue that for the jump inverse $\Phi_\lambda(\mc{M})$ we have that  $\{\mc{N} \; : \; \mc{N} \leq_\alpha \Phi_\lambda(\mc{M})\}$ is  $\bfPi^0_{\alpha+1}$-hard. This is argued exactly the same as in Lemma \ref{cor:abovealpha}.
	\end{proof}
	
	To generalize Lemma \ref{lem:uniform}, we need slightly more than Lemma \ref{lem:contJumpInversion}, but the idea is essentially the same. The only difference is that we are now considering pairs of structures.
	
	\begin{lemma}
		For $\alpha$ not a limit ordinal, the set of pairs of structures
		\[ \{(\mc{M},\mc{N}) \; : \; \mc{M} \leq_\alpha \mc{N}\}\]
		is $\bfPi^0_{2\alpha}$-complete.
	\end{lemma}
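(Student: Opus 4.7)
The plan is to reduce the infinite case to the finite case (Lemma \ref{lem:uniform}) by means of the jump inversion machinery already developed in this section, but applied to pairs of structures rather than single structures. The upper bound $\bfPi^0_{2\alpha}$ was already observed in Lemma \ref{lem:uniform} (and follows directly from unwinding the definition of the back-and-forth relation), so the only work is to establish $\bfPi^0_{2\alpha}$-hardness.

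Write $\alpha = \beta + n$ where $\beta$ is $0$ or a limit ordinal and $n \geq 1$ (this is possible because $\alpha$ is not a limit). By ordinal arithmetic $2 \cdot \alpha = \beta + 2n$, since $n + \beta = \beta$ for $\beta$ infinite. The case $\beta = 0$ is precisely Lemma \ref{lem:uniform}, so assume $\beta \geq \omega$. Let $\mc{L}_0, \mc{L}_1$ and $\Phi_\beta$ be as in the setup preceding Theorem \ref{thm:BiInt}, applied with the parameter $\beta$. By Theorem \ref{thm:BiInt}(3), for all graphs $\mc{A}$ and $\mc{B}$,
\[ \mc{A} \leq_n \mc{B} \iff \Phi_\beta(\mc{A}) \leq_{\beta+n} \Phi_\beta(\mc{B}) . \]

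Now I would repeat the argument of Lemma \ref{lem:contJumpInversion} on each coordinate. Given any $\bfPi^0_{\beta+2n}$ set $S \subseteq X$ in a Polish space, pass (using a parameter relative to which $S$ is $\Pi^0_{\beta+2n}$) to a finer Polish topology on $X$ making all $\bfDelta^0_{\beta+1}$ sets clopen, so that $S$ is $\bfPi^0_{2n}$ in the new topology. Apply the $\bfPi^0_{2n}$-hardness from Lemma \ref{lem:uniform} in this finer topology to obtain a continuous reduction $x \mapsto (F_0(x), F_1(x))$ from $S$ into pairs of graphs with $x \in S$ iff $F_0(x) \leq_n F_1(x)$. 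Viewed in the original topology, $F_0$ and $F_1$ are $\bfSigma^0_{\beta+1}$-measurable maps into $\Mod(E)$, so Lemma \ref{lem:functoralpha} supplies continuous maps $\Phi_\beta(F_i) : X \to \Mod(A,R)$ with $\Phi_\beta(F_i)(x) \cong \Phi_\beta(F_i(x))$. Combining with the displayed equivalence above gives
\[ x \in S \iff F_0(x) \leq_n F_1(x) \iff \Phi_\beta(F_0)(x) \leq_{\beta+n} \Phi_\beta(F_1)(x), \]
so $x \mapsto (\Phi_\beta(F_0)(x), \Phi_\beta(F_1)(x))$ is the required continuous reduction witnessing $\bfPi^0_{\beta+2n} = \bfPi^0_{2\alpha}$-hardness.

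There is essentially no obstacle beyond bookkeeping: the only conceptual step is observing that the proof of Lemma \ref{lem:contJumpInversion} goes through componentwise for pairs, since Lemma \ref{lem:functoralpha} is applied to each coordinate independently and the equivalence in Theorem \ref{thm:BiInt}(3) controls the back-and-forth level on pairs. The one point that deserves care is verifying $2 \cdot \alpha = \beta + 2n$ rather than $\beta + n + \beta + n$, which is where we use that $\alpha$ is not a limit (so $n \geq 1$ is finite and absorbs correctly against $\beta$).
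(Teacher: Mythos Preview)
Your proposal is correct and follows essentially the same route as the paper: write $\alpha=\lambda+n$ with $\lambda$ limit (or $0$) and $n\ge 1$, invoke Lemma~\ref{lem:uniform} for the finite level $n$, refine the topology to make the reduction $\bfSigma^0_{\lambda+1}$-measurable, split it into its two coordinate maps, and apply Lemma~\ref{lem:functoralpha} and Theorem~\ref{thm:BiInt}(3) to each coordinate. The only cosmetic difference is notation; one small remark is that your justification of $2\alpha=\beta+2n$ via ``$n+\beta=\beta$'' is not quite the relevant identity (that would be for $\alpha+\alpha$), whereas what is actually used is $2\cdot\beta=\beta$ for $\beta$ a limit, giving $2\cdot(\beta+n)=2\beta+2n=\beta+2n$.
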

	
	\begin{proof}
		For finite ordinals we have Lemma \ref{lem:uniform}. So assume that $\alpha$ is infinite and write $\alpha=\lambda+n$ for a limit ordinal $\lambda$ and $n \geq 1$. Note that $2\alpha=\lambda+2n$.
		Consider the construction from Lemma \ref{lem:uniform} showing that the set $\{(\mc{M},\mc{N}) \; : \; \mc{M} \leq_n \mc{N}\}$ is $\bfPi^0_{2n}$-complete.
		Abusing notation, we can assume that $\mc{M}$ and $\mc{N}$ are graphs up to effective bi-interpretation. We argue using the $\lambda$-jump inversion that $\{(\mc{M},\mc{N}) \; : \; \mc{M} \leq_\alpha \mc{N}\}$ is  $\bfPi^0_{\lambda+2n}$-hard.
		
		Consider a $\bfPi^0_{\lambda+2n}$ set $A$.
		In a manner analogous to the argument in Lemma \ref{lem:contJumpInversion}, we may refine the topology so that $A$ is $\bfPi^0_{2n}$ and find a map $G$ that witnesses the hardness of $\{(\mc{M},\mc{N}) \; : \; \mc{M} \leq_n \mc{N}\}$ with respect to this map.
		With respect to the original topology, this map $G$ is $\bfSigma^0_{\lambda+1}$-measurable.
		Note that $G(x)$ is a pair $(\mc{M}_x,\mc{N}_x)$.
		The map $G$ factors into two $\bfSigma^0_{\lambda+1}$-measurable maps $G_1(x)=\mc{M}_x$ and  $G_2(x)=\mc{N}_x$.
		Lemma \ref{lem:functoralpha} gives us continuous maps $\Phi_\lambda(G_i)$ such that $\Phi_\lambda(G_i)(x)=\Phi_\lambda(G_i(x))$.
		Note that by Theorem \ref{thm:BiInt},
		\[x\in A \iff G_1(x)\leq_n G_2(x) \iff \Phi_\lambda(G_1(x)) \leq_{\lambda+n} \Phi_\lambda(G_2(x)) \iff \Phi_\lambda(G_1)(x) \leq_{\lambda+n} \Phi_\lambda(G_2)(x).\]
		Therefore, the map $(\Phi_\lambda(G_1),\Phi_\lambda(G_2))$ is a reduction from $A$ to $\{(\mc{M},\mc{N}) \; : \; \mc{M} \leq_\alpha \mc{N}\}$, so the set is $\bfPi^0_{\lambda+2n}$ hard.
	\end{proof}
	
	\section{$\E_\alpha$ and $\A_\alpha$ formulas}\label{sec:defineAE}
	
	As we have seen, the $\Sigma_\alpha$/$\Pi_\alpha$ hierarchy is not optimal for defining back-and-forth types. In particular, for a fixed $\mc{M}$, defining the set
	\[ \{ \mc{N} : \mc{N} \geq_\alpha \mc{M} \}\]
	requires a $\Pi_{\alpha + 2}$ formula, but it is not true that if $\varphi$ is $\Pi_{\alpha+2}$ and $\mc{M} \leq_\alpha \mc{N}$ and $\mc{M} \models \varphi$ then $\mc{N} \models \varphi$. In this section, we define a hierarchy of complexity classes of $\mc{L}_{\omega_1 \omega}$ formulas which more precisely capture the back-and-forth relations. We call these the \textit{back-and-forth complexity classes} of $\mc{L}_{\omega_1 \omega}$ formulas.
	
	\begin{definition}
		We define $\A_\alpha$ and $\E_\alpha$ (and simultaneously ${\vwA_\alpha}$ and ${\vwE_\alpha}$) by mutual transfinite recursion.
		\begin{itemize}
			\item  $\A_1 := \Pi_1$
			\item $\E_1 := \Sigma_1$
			\item  $\A_\alpha :=$ closure of $\bigcup_{\beta < \alpha} \vwE_\beta$ under $\forall$ and $\bigdoublewedge_{i \in \omega}$
			\item  $\E_\alpha :=$ closure of $\bigcup_{\beta < \alpha} \vwA_{\beta}$ under $\exists$ and $\bigdoublevee_{j \in \omega}$
			\item  $\vwE_\alpha :=$ closure of $\E_\alpha$ under $\bigdoublevee_{j \in \omega}, \bigdoublewedge_{i \in \omega}$
			\item  $\vwA_\alpha :=$ closure of $\A_\alpha$ under $\bigdoublevee_{j \in \omega}, \bigdoublewedge_{i \in \omega}$
		\end{itemize}
		Each $\A_\alpha$ formula can be written in the form
		\[ \bigdoublewedge_{i \in I} \forall \bar{y}_i \varphi_i(\bar{x},\bar{y}_i)\]
		where the formulas $\varphi_i$ are $\vwE_\beta$ for some $\beta < \alpha$, and similarly a $\E_\alpha$ formula can be written in the form
		\[ \bigdoublevee_{i \in I} \exists \bar{y}_i \varphi_i(\bar{x},\bar{y}_i)\]
		where each $\varphi_i$ is $\vwA_\beta$ for some $\beta < \alpha$.
	\end{definition}
	\noindent One could also make these definitions for $\mc{L}_{\infty, \omega}$ formulas by allowing the conjunctions and disjunctions to be over arbitrary sets; \ref{prop:transfer-over-bf} still holds in this context. For simplicity we limit ourselves to $\mc{L}_{\omega_1 \omega}$.
	
	The following are some basic properties of these complexity classes that can all be confirmed by straightforward induction arguments. The complexity classes $\exists_\alpha$ and $\forall_\alpha$ are defined by counting alternations of quantifiers but not counting infinite conjunctions and disjunctions.

	\begin{proposition}
		Up to equivalence in countable structures:
		\begin{itemize}
			\item  $\Sigma_\alpha \subseteq \E_\alpha \subseteq\vwE_\alpha \subseteq \exists_\alpha \cap \A_{\alpha+1} \cap\E_{\alpha+2}$
			\item
			$\bigdoublevee \E_\alpha \subseteq \E_\alpha$,
			$\bigdoublewedge \E_\alpha \subseteq \vwE_\alpha$,
			$\exists \E_\alpha \subseteq \E_\alpha$,
			$\forall \E_\alpha \subseteq \A_{\alpha+1}$
			\item
			$\E_\alpha\subseteq \E_{\alpha+1}$,
			$\A_\alpha\subseteq \A_{\alpha+1}$,
			$\vwE_\alpha\subseteq \vwE_{\alpha+1}$,
			$\vwA_\alpha\subseteq \vwA_{\alpha+1}$
			\item
			$\bigdoublevee \vwE_\alpha \subseteq \vwE_\alpha$,
			$\bigdoublewedge \vwE_\alpha \subseteq \vwE_\alpha$,
			$\exists \vwE_\alpha \subseteq \E_{\alpha+2}$,
			$\forall \vwE_\alpha \subseteq \A_{\alpha+1}$
			\item
			$\lnot\E_\alpha=\A_\alpha$,
			$\lnot\A_\alpha=\E_\alpha$,
			$\lnot\vwE_\alpha=\vwA_\alpha$,
			$\lnot\vwA_\alpha=\vwE_\alpha$.
			\item
			for limit $\alpha$: $\vwE_\alpha = \vwA_\alpha =$ closure of $\bigcup_{\beta < \alpha} (\vwE_\beta \cup \vwA_\beta)$ under $\bigdoublevee\bigdoublewedge$, thus $\vwE_\alpha \subseteq \E_{\alpha+1}$ (not $\E_{\alpha+2}$)
			\item
			$\vwE_\alpha \subseteq$ arbitrary $\bigdoublewedge \bigdoublevee$ of $\Sigma_\alpha$
		\end{itemize}
	\end{proposition}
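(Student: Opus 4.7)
My plan is to prove all of the listed inclusions by simultaneous transfinite induction on $\alpha$, treating $\A_\alpha$, $\E_\alpha$, $\vwA_\alpha$, $\vwE_\alpha$ in parallel since the four classes are mutually recursively defined. The base case $\alpha = 1$ is immediate from $\A_1 = \Pi_1$ and $\E_1 = \Sigma_1$: $\vwE_1$ is the $\bigdoublewedge, \bigdoublevee$-closure of $\Sigma_1$, so in particular lies in any class closing countable Boolean combinations of $\Sigma_1$ formulas, giving $\vwE_1 \subseteq \A_2 \cap \E_3$. The duality $\lnot \E_\alpha = \A_\alpha$ and $\lnot \vwE_\alpha = \vwA_\alpha$ follow by de Morgan: pushing negation through the defining clauses of $\E_\alpha$ (closure of $\bigcup_{\beta < \alpha} \vwA_\beta$ under $\exists$ and $\bigdoublevee$) produces exactly the defining clauses of $\A_\alpha$, invoking the inductive hypothesis on the base.

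For the successor step at $\alpha$, the bullet block on closures under quantifiers and connectives is essentially bookkeeping: $\bigdoublevee \E_\alpha \subseteq \E_\alpha$ and $\exists \E_\alpha \subseteq \E_\alpha$ are by definition, $\bigdoublewedge \E_\alpha \subseteq \vwE_\alpha$ is the defining closure for $\vwE_\alpha$, and $\forall \E_\alpha \subseteq \A_{\alpha+1}$ holds because $\E_\alpha \subseteq \vwE_\alpha$ lies in the base of $\A_{\alpha+1}$. Monotonicity $\E_\alpha \subseteq \E_{\alpha+1}$, $\vwE_\alpha \subseteq \vwE_{\alpha+1}$, etc., reduces to monotonicity of the bases, which follows inductively from $\vwA_\beta \subseteq \vwA_{\beta+1}$ (proved by applying a vacuous $\forall$ and using $\vwE_\beta \subseteq \vwA_{\beta+1}$). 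For $\Sigma_\alpha \subseteq \E_\alpha$, every $\Sigma_\alpha$-formula is $\bigdoublevee_i \exists \bar y_i\, \psi_i$ with each $\psi_i \in \Pi_{\beta_i}$, $\beta_i < \alpha$, and inductively $\Pi_{\beta_i} \subseteq \A_{\beta_i} \subseteq \vwA_{\beta_i}$, which lies in the base of $\E_\alpha$. The main technical step is $\vwE_\alpha \subseteq \A_{\alpha+1} \cap \E_{\alpha+2}$: using the infinitary distributive law
\[ \bigdoublewedge_i \bigdoublevee_j \phi_{i,j} \equiv \bigdoublevee_{f \colon \omega \to \omega} \bigdoublewedge_i \phi_{i,f(i)}, \]
any $\vwE_\alpha$-formula can be written in normal form $\bigdoublevee_k \bigdoublewedge_\ell \eta_{k,\ell}$ with $\eta_{k,\ell} \in \E_\alpha$ (and dually as $\bigdoublewedge_k \bigdoublevee_\ell \eta_{k,\ell}$). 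Inductively $\E_\alpha \subseteq \A_{\alpha+1}$, so each inner $\bigdoublewedge_\ell \eta_{k,\ell}$ is in $\vwA_{\alpha+1}$; the outer $\bigdoublevee_k$ then gives membership in $\E_{\alpha+2}$, and the dual form gives $\A_{\alpha+1}$. The quantifier-depth bound $\vwE_\alpha \subseteq \exists_\alpha$ is the same induction but forgets the infinitary connectives.

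For limit $\alpha$ one shows $\vwE_\alpha = \vwA_\alpha$ equals the $\bigdoublevee \bigdoublewedge$-closure of $\bigcup_{\beta < \alpha} (\vwE_\beta \cup \vwA_\beta)$: by monotonicity and $\vwE_\beta \subseteq \vwA_{\beta+1}$, $\vwA_\beta \subseteq \vwE_{\beta+1}$, this union is already stable, and since $\beta+1 < \alpha$ at a limit, any $\forall$ or $\exists$ applied to a base formula $\phi \in \vwE_\beta$ yields an $\A_{\beta+1}$ or $\E_{\beta+1}$ formula already sitting in the base; hence the quantifier closures collapse into the infinitary closure. This in turn yields the sharper $\vwE_\alpha \subseteq \E_{\alpha+1}$: in the normal form $\bigdoublevee_k \bigdoublewedge_\ell \eta_{k,\ell}$ the inner conjunction is now $\vwA_\alpha$, which is the base of $\E_{\alpha+1}$. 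Finally, the arbitrary $\bigdoublewedge\bigdoublevee$-of-$\Sigma_\alpha$ containment is read off the same normal form once one observes inductively $\E_\alpha \subseteq \bigdoublevee \Sigma_\alpha$. The main obstacle is not any individual step but the interleaved bookkeeping: the four classes shift by one level relative to each other ($\E_\alpha \subseteq \A_{\alpha+1}$, $\vwE_\alpha \subseteq \E_{\alpha+2}$), so one must carefully order the induction so that at stage $\alpha$ every hypothesis has already been verified for all four classes at all ordinals $\leq \alpha$ that the current claim requires.
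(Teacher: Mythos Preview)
Your overall approach---simultaneous transfinite induction on $\alpha$---is exactly what the paper intends; the paper itself gives no proof beyond the remark that these ``can all be confirmed by straightforward induction arguments.'' Most of your bookkeeping (duality via de Morgan, monotonicity, closure under the listed operations, the limit case) is correct.

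There is one genuine issue. For the inclusion $\vwE_\alpha \subseteq \A_{\alpha+1} \cap \E_{\alpha+2}$ you invoke the infinitary distributive law
\[
\bigdoublewedge_i \bigdoublevee_j \phi_{i,j} \;\equiv\; \bigdoublevee_{f\colon\omega\to\omega} \bigdoublewedge_i \phi_{i,f(i)}
\]
to put a $\vwE_\alpha$ formula into normal form $\bigdoublevee_k \bigdoublewedge_\ell \eta_{k,\ell}$ (or its dual). The right-hand disjunction is indexed by $\omega^\omega$, which is uncountable, so the resulting formula is not in $\mc{L}_{\omega_1\omega}$ and hence not literally in $\E_{\alpha+2}$ or $\A_{\alpha+1}$ as defined. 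The phrase ``up to equivalence in countable structures'' does not automatically repair this: you would need a separate argument that only countably many disjuncts are needed.

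The fix is that you are working much harder than necessary. By definition, $\A_{\alpha+1}$ is the closure under $\forall,\bigdoublewedge$ of $\bigcup_{\beta\le\alpha}\vwE_\beta$, so $\vwE_\alpha$ sits in the base and $\vwE_\alpha\subseteq\A_{\alpha+1}$ is immediate. Then $\A_{\alpha+1}\subseteq\vwA_{\alpha+1}$, and $\vwA_{\alpha+1}$ is in the base of $\E_{\alpha+2}$, giving $\vwE_\alpha\subseteq\E_{\alpha+2}$. The same chain handles $\exists\vwE_\alpha\subseteq\E_{\alpha+2}$ and the sharper limit-case bound $\vwE_\alpha=\vwA_\alpha\subseteq\E_{\alpha+1}$. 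Reserve the distributive law for the final bullet, where ``arbitrary $\bigdoublewedge\bigdoublevee$'' explicitly permits uncountable index sets.
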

	
	See \cref{fig:hierarchy} for a diagram summarizing the inclusions of these syntactic classes and a diagram from the classical $\Sigma_\alpha$ hierarchy for comparison.
	
	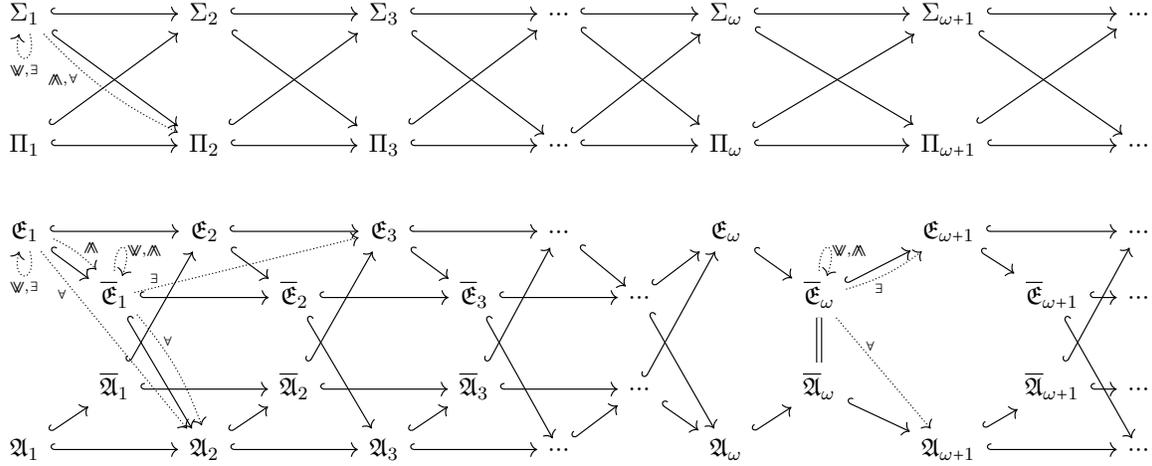
\begin{figure}
		\centering
		\catcode`\&=\active
		\protected\def\-{\relax\ifmmode\expandafter\overline\else\expandafter\hyphen\fi}
		\scalebox{0.88}{\begin{tikzcd}[
				column sep=1.5em,
				operation/.style={
					densely dotted,
					every label/.append style={
						font=\everymath\expandafter{\the\everymath\scriptscriptstyle},
					},
				},
				]
				\Sigma_1 \ar[rr,hook] \ar[ddrr,hook]
				\ar[operation,loop below,"{\bigdoublevee,\exists}"]
				\ar[ddrr,operation,bend right=10,"{\bigdoublewedge,\forall}"'{pos=.3,inner sep=1pt}] &&
				\Sigma_2 \ar[rr,hook] \ar[ddrr,hook] &&
				\Sigma_3 \ar[rr,hook] \ar[ddrr,hook] &&
				\dotsb \ar[rr,hook] \ar[ddrr,hook] &&
				\Sigma_\omega \ar[rr,hook] \ar[ddrr,hook] &&[1em]
				\Sigma_{\omega+1} \ar[rr,hook] \ar[ddrr,hook] &[-1ex]&[-1ex]
				\dotsb
				\\
				\\
				\Pi_1 \ar[rr,hook] \ar[uurr,hook] &&
				\Pi_2 \ar[rr,hook] \ar[uurr,hook] &&
				\Pi_3 \ar[rr,hook] \ar[uurr,hook] &&
				\dotsb \ar[rr,hook] \ar[uurr,hook] &&
				\Pi_\omega \ar[rr,hook] \ar[uurr,hook] &&
				\Pi_{\omega+1} \ar[rr,hook] \ar[uurr,hook] &&
				\dotsb
				\\
				\Exs_1 \ar[rr,hook] \drar[hook]
				\ar[operation,loop below,"{\bigdoublevee,\exists}"]
				\drar[operation,bend left=10,shift left=1,shorten >=-.5ex,"\bigdoublewedge"{pos=.7,inner sep=1pt}]
				\ar[dddrr,operation,"{\forall}"'{pos=.2,inner sep=1pt}] &&
				\Exs_2 \ar[rr,hook] \drar[hook] &&
				\Exs_3 \ar[rr,hook] \drar[hook] &&
				\dotsb \drar[hook] &&
				\Exs_\omega \drar[hook] &&
				\Exs_{\omega+1} \ar[rr,hook] \drar[hook] &&
				\dotsb
				\\[-1em]
				&
				\-\Exs_1 \ar[rr,hook] \ar[ddr,hook]
				\ar[operation,out=90,in=70,loop,shorten >=-.5ex,"{\scriptscriptstyle\bigdoublevee,\bigdoublewedge}"{right,pos=.7,inner sep=0pt}]
				\ar[ddr,operation,bend left=10,shift left,"\forall"{pos=.3,inner sep=0pt}]
				\ar[urrr,operation,shorten <=-.5ex,"\exists"{pos=.1,inner sep=1pt}] &&
				\-\Exs_2 \ar[rr,hook] \ar[ddr,hook] &&
				\-\Exs_3 \ar[rr,hook] \ar[ddr,hook] &&
				\dotsb \urar[hook] \ar[ddr,hook] &&
				\-\Exs_\omega \dar[equal] \urar[hook]
				\ar[operation,out=90,in=70,loop,shorten >=-.5ex,"{\bigdoublevee,\bigdoublewedge}"{right,pos=.7,inner sep=0pt}]
				\ar[ddr,operation,"\forall"{pos=.3,inner sep=0pt}]
				\urar[operation,bend right=10,"\exists"'{pos=.3,inner sep=1pt}] &&
				\-\Exs_{\omega+1} \rar[hook] \ar[ddr,hook] &
				\dotsb
				\\
				&
				\-\All_1 \ar[rr,hook] \ar[uur,hook] &&
				\-\All_2 \ar[rr,hook] \ar[uur,hook] &&
				\-\All_3 \ar[rr,hook] \ar[uur,hook] &&
				\dotsb \drar[hook] \ar[uur,hook] &&
				\-\All_\omega \drar[hook] &&
				\-\All_{\omega+1} \rar[hook] \ar[uur,hook] &
				\dotsb
				\\[-1em]
				\All_1 \ar[rr,hook] \urar[hook] &&
				\All_2 \ar[rr,hook] \urar[hook] &&
				\All_3 \ar[rr,hook] \urar[hook] &&
				\dotsb \urar[hook] &&
				\All_\omega \urar[hook] &&
				\All_{\omega+1} \ar[rr,hook] \urar[hook] &&
				\dotsb
		\end{tikzcd}}
		\caption{Hierarchies of $\Sigma_\alpha/\Pi_\alpha$ and $\Exs_\alpha/\All_\alpha$ formulas. Solid arrows denote inclusions. The dotted arrows show how syntactic operations affect the complexity of formulas.}
		\label{fig:hierarchy}
	\end{figure}
	
	\medskip
	
	As indicated above, the key properties of this hierarchy lie in the way that it interacts with the $\alpha$ back-and-forth relations.
	We show first that the $\alpha$ back-and-forth relations guarantee agreement on $\vwE_\alpha$ and $\vwA_\alpha$ formulas the same way that they guarantee agreement on $\Sigma_\alpha$ and $\Pi_\alpha$ formulas even though there are more $\vwE_\alpha$ and $\vwA_\alpha$ formulas.
	In particular, we aim to extend the following theorem of Karp \cite{Karp} (see e.g. \cite{MonBook}, Theorem II.36).
	
	\thmKarp*
	
	Given this tight connection between the complexity classes of the hyperarithmetic hierarchy and the back-and-forth game, one may wonder why these classes should not themselves be considered the classes of ``back-and-forth complexity''.
	As it turns out, the analog of Karp's theorem holds with $\Pi_\alpha$ replaced by $\vwA_\alpha$ and $\Sigma_\alpha$ replaced by $\vwE_\alpha$.
	Furthermore, the back-and-forth complexity classes will enjoy many connections with the back-and-forth relations that ordinary $\Pi_\alpha$ and $\Sigma_\alpha$ formulas do not enjoy.
	We begin by demonstrating the analog of Karp's theorem for  $\vwA_\alpha$ and $\vwE_\alpha$.
	Because these classes are bigger than $\Pi_\alpha$ and $\Sigma_\alpha$, it is enough to demonstrate that the back-and-forth relations also preserve all $\vwA_\alpha$ and $\vwE_\alpha$ in the appropriate manner.
	
	\proptransferover*
	
	\begin{proof}
		We argue by induction on complexity. Our base case is $\alpha = 1$ and $\A_1$ and $\E_1$ formulas, which are just $\Pi_1$ and $\Sigma_1$ formulas. For these, the proposition follows directly from Theorem \ref{thm:Karp}.	Now, we have the inductive cases. In the inductive argument, we use the fact that we can group the outside quantifiers in a $\A_\alpha$ or $\E_\alpha$ sentence.
		
		Suppose that $\varphi(\bar{x})$ is $\E_\alpha$, say $\varphi(\bar{x}) = \bigdoublevee_i \exists \bar{y}_i \theta_i(\bar{x},\bar{y}_i)$ where the $\theta_i(\bar{x},\bar{y}_i)$ are $\vwA_\beta$ for some $\beta < \alpha$. Suppose that $\mc{N} \models \varphi(\bar{b})$; then there is $i$ and $\bar{b}'$ such that $\mc{N} \models \theta_i(\bar{b},\bar{b}')$. Then there is $\bar{a}'$ such that $(\mc{N},\bar{b}\bar{b}') \leq_\beta (\mc{M},\bar{a}\bar{a}')$. By the induction hypothesis, $\mc{M} \models \theta_i(\bar{a},\bar{a}')$. Thus $\mc{M} \models \varphi(\bar{a})$ as desired.
		
		Suppose that $\varphi(\bar{x})$ is $\A_\alpha$, say $\varphi(\bar{x}) = \bigdoublewedge_i \forall \bar{y}_i \theta_i(\bar{x},\bar{y}_i)$ where the $\theta_i(\bar{x},\bar{y}_i)$ are $\vwE_\beta$ for some $\beta < \alpha$. This case is dual to the previous one. Suppose that $\mc{M} \models \varphi(\bar{a})$; then for every $\bar{a}'\in\mc{M}$ and $i$,  $\mc{M} \models \theta_i(\bar{a},\bar{a}')$.  Suppose there is some $\bar{b'}\in\mc{N}$ and $i$ such that $\mc{N}\not\models \theta_i(\bar{b},\bar{b}')$. Then there is $\bar{a}'$ such that $(\mc{N},\bar{b}\bar{b}') \leq_\beta (\mc{M},\bar{a}\bar{a}')$. By the induction hypothesis, $\mc{M} \not\models \theta_i(\bar{a},\bar{a}')$, a contradiction. Thus $\mc{N} \models \varphi(\bar{b})$ as desired.
		
		For the case of $\vwE_\alpha$, we induct on the number of infinitary conjunctions or disjunctions in front of the base $\E_\alpha$ formula. Suppose that $\varphi(\bar{x})$ is $\vwE_\alpha$, and say $\varphi(\bar{x}) = \bigdoublevee_i \theta_i(\bar{x})$ where each $\theta_i(\bar{x})$ is $\vwE_\alpha$ with fewer infinitary conjunctions or disjunctions in the front. Suppose that $\mc{N} \models \varphi(\bar{b})$; then for some $i$, $\mc{N} \models \theta_i(\bar{b})$, and so by the induction hypothesis, $\mc{M} \models \theta_i(\bar{a})$. Thus $\mc{M} \models \varphi(\bar{a})$. Similarly, suppose that $\varphi(\bar{x})$ is $\vwE_\alpha$, and say $\varphi(\bar{x}) = \bigdoublewedge_i \theta_i(\bar{x})$ where each $\theta_i(\bar{x})$ is $\vwE_\alpha$. Suppose that $\mc{N} \models \varphi(\bar{b})$; then for all $i$, $\mc{N} \models \theta_i(\bar{b})$, and so by the induction hypothesis, $\mc{M} \models \theta_i(\bar{a})$ for all $i$. Thus $\mc{M} \models \varphi(\bar{a})$.
		
		The cases where $\varphi(\bar{x})$ is $\vwA_\alpha$, and either of the form $\varphi(\bar{x}) = \bigdoublevee_i \theta_i(\bar{x})$ or $\varphi(\bar{x}) = \bigdoublewedge_i \theta_i(\bar{x})$ where each $\theta_i(\bar{x})$ is $\vwA_\alpha$, is identical to the previous case.
	\end{proof}
	
	The following is immediate from the above lemma, along with Karp's theorem.
	
	\begin{corollary}
		For any $\alpha \geq 1$, structures $\mc{M}$ and $\mc{N}$, and tuples $\bar{a}\in\mc{M}$ and $\bar{b}\in\mc{N}$, the following are equivalent:
		\begin{enumerate}
			\item $(\mc{M},\bar{a})\leq_\alpha (\mc{N},\bar{b})$.
			\item Every $\Pi_\alpha$ formula true about $\bar{a}$ in $\mc{M}$ is true about $\bar{b}$ in $\mc{N}$.
			\item Every $\Sigma_\alpha$ formula true about $\bar{b}$ in $\mc{N}$ is true about $\bar{a}$ in $\mc{M}$.
			\item Every $\A_\alpha$ formula true about $\bar{a}$ in $\mc{M}$ is true about $\bar{b}$ in $\mc{N}$.
			\item Every $\E_\alpha$ formula true about $\bar{b}$ in $\mc{N}$ is true about $\bar{a}$ in $\mc{M}$.
			\item Every $\vwA_\alpha$ formula true about $\bar{a}$ in $\mc{M}$ is true about $\bar{b}$ in $\mc{N}$.
			\item Every $\vwE_\alpha$ formula true about $\bar{b}$ in $\mc{N}$ is true about $\bar{a}$ in $\mc{M}$.
		\end{enumerate} 
	\end{corollary}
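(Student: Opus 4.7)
The plan is to combine Karp's theorem (Theorem \ref{thm:Karp}), which already establishes (1) $\Leftrightarrow$ (2) $\Leftrightarrow$ (3), with Proposition \ref{prop:transfer-over-bf} and some immediate syntactic containments to fold the remaining four conditions into a single equivalence cycle.

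First I would record the containments $\Pi_\alpha \subseteq \A_\alpha \subseteq \vwA_\alpha$ and $\Sigma_\alpha \subseteq \E_\alpha \subseteq \vwE_\alpha$. The inclusions $\A_\alpha \subseteq \vwA_\alpha$ and $\E_\alpha \subseteq \vwE_\alpha$ are immediate from the definitions, since $\vwA_\alpha$ and $\vwE_\alpha$ are by construction the closures of $\A_\alpha$ and $\E_\alpha$ under the infinitary $\bigdoublewedge$ and $\bigdoublevee$; the inclusions $\Pi_\alpha \subseteq \A_\alpha$ and $\Sigma_\alpha \subseteq \E_\alpha$ are listed among the basic properties of the hierarchy and follow by a straightforward induction on $\alpha$. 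From these containments, the implications (6) $\Rightarrow$ (4) $\Rightarrow$ (2) and (7) $\Rightarrow$ (5) $\Rightarrow$ (3) follow by inspection: a statement of the form ``every formula in a class $\Gamma$ having property $P$ has property $Q$'' trivially implies the same statement for every subclass of $\Gamma$.

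The two remaining nontrivial implications (1) $\Rightarrow$ (6) and (1) $\Rightarrow$ (7) are exactly what Proposition \ref{prop:transfer-over-bf} asserts: assuming $(\mc{M},\bar{a}) \leq_\alpha (\mc{N},\bar{b})$, every $\vwA_\alpha$ formula true of $\bar{a}$ in $\mc{M}$ is true of $\bar{b}$ in $\mc{N}$, and every $\vwE_\alpha$ formula true of $\bar{b}$ in $\mc{N}$ is true of $\bar{a}$ in $\mc{M}$. Combined with the implications (2) $\Rightarrow$ (1) and (3) $\Rightarrow$ (1) from Karp, this closes the two cycles (1) $\Rightarrow$ (6) $\Rightarrow$ (4) $\Rightarrow$ (2) $\Rightarrow$ (1) and (1) $\Rightarrow$ (7) $\Rightarrow$ (5) $\Rightarrow$ (3) $\Rightarrow$ (1), which together show that all seven conditions are equivalent.

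There is essentially no obstacle here; as the one-line parenthetical before the corollary indicates, the result is immediate once Proposition \ref{prop:transfer-over-bf} and Karp's theorem are in hand. The only care needed is to match up the asymmetric directions of transfer correctly: the $\vwA_\alpha$ half of Proposition \ref{prop:transfer-over-bf} pushes truth from $\mc{M}$ to $\mc{N}$, matching conditions (4) and (6), while the $\vwE_\alpha$ half pushes truth from $\mc{N}$ to $\mc{M}$, matching conditions (5) and (7).
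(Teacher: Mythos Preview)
Your proposal is correct and matches the paper's approach exactly: the paper simply states that the corollary is immediate from Proposition \ref{prop:transfer-over-bf} together with Karp's theorem, and you have spelled out precisely how those two ingredients combine via the containments $\Pi_\alpha \subseteq \A_\alpha \subseteq \vwA_\alpha$ and $\Sigma_\alpha \subseteq \E_\alpha \subseteq \vwE_\alpha$ to close the two implication cycles.
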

	
	This means that when it comes to interactions with back-and-forth relations, our new notions of complexity act at least as nicely as the classical notions.
	That said, our new notions have properties that the classical ones do not.
	The key difference is distilled in the following result.
	Classically, there may be no maximal  $\Pi_\alpha$ or $\Sigma_\alpha$ formula, i.e., one which captures the entire $\Pi_\alpha$ or $\Sigma_\alpha$ theory of a given structure or tuple (we give examples of such structures in the following section).
	In the past, a $\Pi_{2\alpha}$ formula was used to describe theories at this level (see \cite{MonBook} Lemma VI.14), though we showed in \cref{lem:aboveGeneral} that $\Pi_{\alpha+2}$ is sufficient.
	Our larger classes contrast with this; they are always able to capture an entire theory at level $\alpha$ with a formula at that same level.
	
	\thmbnfformulas*
	
	\begin{proof}
		We argue by induction, starting with $\alpha = 1$. For $\psi_{\bar{a},\mc{M},1}$, for each $m$ there are only finitely many possible atomic $m$-types (using only the first $m$ atomic formulas). Let $\theta_{\bar{a}}(\bar{x})$ be the finitary quantifier-free formula which says that $\bar{x}$ and $\bar{a}$ satisfy the same atomic formulas (from among the first $|\bar{a}|$-many formulas). In a finite relational language, like that of linear orders, we can take $\theta_{\bar{a}}(\bar{x})$ to say that $\bar{a}$ and $\bar{x}$ have the same atomic type, i.e., are ordered in the same way. Then $(\mc{N},\bar{b}) \geq_1 (\mc{M},\bar{a})$ if and only if
		\[ \mc{N} \models \bigdoublewedge_{n \in \mathbb{N}} \forall \bar{y}^n \bigdoublevee_{\bar{a}' \in \mc{M}^n} \theta_{\bar{a}\bar{a}'}(\bar{b},\bar{y}).\]
		The inner disjunction looks like it is infinite, but in fact, it is not; this is because there are only finitely many possible formulas $\theta_{\bar{a}\bar{a}'}(\bar{x})$. Thus, this formula is $\A_1$ (and in fact $\Pi_1$). For $\varphi_{\bar{a},\mc{M},1}$, we have $(\mc{N},\bar{b}) \leq_1 (\mc{M},\bar{a})$ if and only if
		\[ \mc{N} \models \bigdoublewedge_{\bar{a}' \in \mc{M}} \exists \bar{y}  \theta_{\bar{a}\bar{a}'}(\bar{b},\bar{y}).\]
		This is $\vwE_1$.

		Now, given $\alpha > 1$, suppose that we have $\vwE_\beta$ formulas $\varphi_{\bar{a},\mc{M},\beta}$ and $\A_\beta$ $\psi_{\bar{a},\mc{M},\beta}$ for $\beta < \alpha$. Then $(\mc{N},\bar{b}) \leq_{\alpha} (\mc{M},\bar{a})$ if and only if
		\[ \bigdoublewedge_{\beta < \alpha} \bigdoublewedge_{\bar{a}' \in \mc{M}} \exists \bar{y} \psi_{\bar{a}\bar{a}',\mc{M},\beta}(\bar{b},\bar{y}) \]
		and
		$(\mc{N},\bar{b}) \geq_{\alpha} (\mc{M},\bar{a})$ if and only if
		\[ \bigdoublewedge_{\beta < \alpha} \bigdoublewedge_{m \in \mathbb{N}} \forall \bar{y}^m \bigdoublevee_{\bar{a}' \in \mc{M}} \varphi_{\bar{a}\bar{a}',\mc{M},\beta}(\bar{b},\bar{y}).\]
		These define our desired $\vwE_\alpha$ formulas $\varphi_{\bar{a},\mc{M},\alpha}$ and $\A_\alpha$ formulas $\psi_{\bar{a},\mc{M},\alpha}$.
	\end{proof}
	
	\section{Henkin constructions}
	
	In countable model theory, Henkin constructions with restricted sets of formulas---often in some countable fragment, or in a complexity class such as $\Sigma_\alpha$---have proved useful, especially for type omitting arguments. We will describe two applications of the $\A_\alpha$/$\E_\alpha$ formulas in Henkin constructions.
	
	\subsection{Scott ranks of models of a theory of linear orders}\label{sec:henkin}
	
	In \cite{GHT}, Gonzalez and Harrison-Trainor prove the following theorem:
	
	\begin{theorem}[Gonzalez and Harrison-Trainor \cite{GHT}]\label{thm:ght}
		Given a satisfiable $\Pi_\alpha$ sentence $T$ of linear orders, there is a linear order
		$\mc{B} \models T$ with a $\Pi_{\alpha +4}$ Scott sentence.
	\end{theorem}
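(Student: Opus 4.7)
The plan is to construct $\mc{B}$ via a Henkin-style construction in which, at each stage, we commit not merely to atomic facts about the Henkin constants but to a complete $\E_\alpha$-type. The motivation is twofold: by Proposition \ref{thm:bnfFormulas} the entire $\leq_\alpha$-type of a tuple is captured by a single $\vwE_\alpha$ formula, and by Proposition \ref{prop:transfer-over-bf} the $\E_\alpha$ class is preserved by $\leq_\alpha$ in the appropriate direction. Thus $\E_\alpha$ formulas are the natural syntactic vehicle for building a model whose $\alpha$-back-and-forth behavior we can fully control, and the classical $\Sigma_\alpha$/$\Pi_\alpha$ stratification would force us to spend extra quantifier alternations bookkeeping equivalence.

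Fix a countable $\mc{A}\models T$, add Henkin constants $\{c_i\}_{i\in\omega}$, and enumerate three kinds of requirements: atomic linear-order facts about the $c_i$, witnesses for every $\E_\alpha$-formula $\exists\bar{y}\,\phi(\bar{x},\bar{y})$ appearing in the evolving $\E_\alpha$-type, and back-and-forth homogeneity demands. At stage $s$ we maintain a complete $\E_\alpha$-type $\tau_s$ on the constants introduced so far, with the standing invariant that $\tau_s$ is realized by some tuple in $\mc{A}$. This invariant guarantees that every $\vwE_\alpha$-formula true in the limit structure $\mc{B}$ is true in $\mc{A}$, which by Proposition \ref{prop:transfer-over-bf} yields $\mc{A}\leq_\alpha\mc{B}$, and hence $\mc{B}\models T$ since $T$ is $\Pi_\alpha$.

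The $\Pi_{\alpha+4}$ Scott sentence comes from interleaving a back-and-forth symmetrization into the construction. Whenever two finite tuples of constants are arranged to share the same $\E_\alpha$-type, and a later step extends one of them, we extend the other with a matching tuple of the same type. For linear orders, such matching requirements can always be met by exploiting shuffle and dense-interpolation tricks: within a given $\E_\alpha$-type, realizations can be distributed densely enough that the required extensions exist. The resulting $\mc{B}$ has the feature that the $\vwE_\alpha$-type of a tuple essentially determines its $(\alpha+2)$-back-and-forth orbit. A Scott sentence for $\mc{B}$ can then be assembled from a disjunction of $\exists$-$\vwE_\alpha$ sentences asserting that each realized type is witnessed; a $\forall$ sentence asserting that every tuple realizes one of these $\vwE_\alpha$-types; and a $\forall\forall\exists$ back-and-forth clause governing extensions of tuples with a common type. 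Counting alternations puts the Scott sentence in $\Pi_{\alpha+4}$.

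The principal obstacle is coordinating the three kinds of requirements in the Henkin construction: committing to the linear order of the constants, while simultaneously preserving the invariant that the current $\E_\alpha$-type is realized in $\mc{A}$, and doing so in a way that future homogeneity-matching demands can always be met. This is where the structure theory of linear orders is essential; the standard toolkit of shuffle sums, condensations, and interval replacements should provide the flexibility needed to absorb the order-theoretic constraints into the $\E_\alpha$-type framework without ever blocking the construction.
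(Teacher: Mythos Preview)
This theorem is not proved in the present paper; it is cited from \cite{GHT}, and Section~\ref{sec:henkin} only sketches the two structural facts about linear orders that drive that proof. So the relevant comparison is between your outline and what the paper identifies as the essential ingredients.

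Your high-level architecture is right in spirit---a Henkin construction committing to $\E$-level information rather than $\Sigma$-level information---but you are missing the specific lemma the paper singles out as the crux. The paper emphasizes that for linear orders, an $\E_\alpha$ formula $\varphi(a_1,\ldots,a_n)$ can be \emph{factored} into $\E_\alpha$ sentences $\theta_0,\ldots,\theta_n$ about the successive intervals $(a_k,a_{k+1})$, in a way that is provably impossible for $\Sigma_\alpha$ formulas. This interval factorization (together with Lemma~\ref{lem:combine-bf}) is what lets the Henkin construction proceed interval-by-interval while keeping the committed information at the $\E$ level. Your proposal never isolates this; the appeal to ``shuffle and dense-interpolation tricks'' and ``shuffle sums, condensations, and interval replacements'' is not a substitute, and without the factorization lemma there is no mechanism for reconciling the linear-order structure of the constants with the $\E$-type bookkeeping.

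Two smaller points. First, the paper states that the Henkin construction in \cite{GHT} uses $\E_{\alpha+1}$ formulas, not $\E_\alpha$; your choice of level may make the invariant ``$\tau_s$ is realized in $\mc{A}$'' too weak to push through the homogenization steps. Second, your derivation of the $\Pi_{\alpha+4}$ bound is a hand-wave: you assert that $\vwE_\alpha$-type agreement gives $(\alpha+2)$-back-and-forth orbits and then count three more quantifiers, but neither the orbit claim nor the arithmetic is justified. The actual bound in \cite{GHT} comes out of a precise analysis of what the interval-factored construction controls, not from generic quantifier counting.
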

	
	The proof is a Henkin construction using formulas of bounded complexity. One of the two key facts about linear orders used in this proof is the well-known characterization of the back-and-forth relations between tuples in linear orders which appears, for example, as Lemma 15.8 of \cite{AK00}.
	
	\begin{lemma}\label{lem:combine-bf}
		Given linear orders $(\mc{A},\bar{a})$ and $(\mc{B},\bar{b})$ (with the tuples in increasing order), $(\mc{A},\bar{a}) \geq_\alpha (\mc{B},\bar{b})$ if and only if for each $i = 0,\ldots,n$ we have $(a_i,a_{i+1})^{\mc{A}} \geq_\alpha (b_i,b_{i+1})^{\mc{B}}$.
	\end{lemma}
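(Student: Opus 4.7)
The plan is to prove the biconditional by transfinite induction on $\alpha$, with the two directions interlocking: the $\alpha$-level of each direction will invoke the $\beta$-level of the other direction for $\beta < \alpha$. Throughout, I adopt the convention that $a_0 = b_0 = -\infty$ and $a_{n+1} = b_{n+1} = +\infty$, so that $(a_i,a_{i+1})^{\mc{A}}$ denotes an open interval with the initial and final segments included as the boundary cases. The base case $\alpha = 1$ reduces to the fact that two increasing tuples of the same length realize the same quantifier-free order type, and likewise for each pair of matching intervals; this is immediate.

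For the inductive step of the forward direction, assume $(\mc{A},\bar{a}) \geq_\alpha (\mc{B},\bar{b})$, fix $\beta < \alpha$ and an index $i$, and let Spoiler play a tuple $\bar{d}$ inside $(b_i,b_{i+1})^{\mc{B}}$. By hypothesis there is $\bar{c} \in \mc{A}$ with $(\mc{A},\bar{a}\bar{c}) \leq_\beta (\mc{B},\bar{b}\bar{d})$; since $\leq_\beta$ forces agreement on the quantifier-free order type of the tuple, $\bar{c}$ must land inside $(a_i,a_{i+1})^{\mc{A}}$. Listing $\bar{a}\bar{c}$ and $\bar{b}\bar{d}$ in increasing order and invoking the inductive hypothesis in the forward direction at level $\beta$, each pair of sub-intervals in the refined enumeration satisfies $\leq_\beta$. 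Collecting just those sub-intervals nested inside $(a_i,a_{i+1})$ and $(b_i,b_{i+1})$ and invoking the inductive hypothesis in the backward direction at level $\beta$ within these intervals, we conclude that $((a_i,a_{i+1})^{\mc{A}},\bar{c}) \leq_\beta ((b_i,b_{i+1})^{\mc{B}},\bar{d})$.

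For the inductive step of the backward direction, assume $(a_i,a_{i+1})^{\mc{A}} \geq_\alpha (b_i,b_{i+1})^{\mc{B}}$ for each $i$. Given $\beta < \alpha$ and any $\bar{d} \in \mc{B}$, partition $\bar{d}$ into pieces $\bar{d}_0,\ldots,\bar{d}_n$ with $\bar{d}_i \subseteq (b_i,b_{i+1})^{\mc{B}}$, and for each $i$ apply the interval hypothesis to obtain $\bar{c}_i \subseteq (a_i,a_{i+1})^{\mc{A}}$ with $((a_i,a_{i+1})^{\mc{A}},\bar{c}_i) \leq_\beta ((b_i,b_{i+1})^{\mc{B}},\bar{d}_i)$. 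Applying the inductive hypothesis in the forward direction at level $\beta$ to each of these interval-level statements produces matching $\leq_\beta$ relations between every pair of sub-intervals of the increasing enumeration of $\bar{a}\bar{c}$ and $\bar{b}\bar{d}$; a final application of the inductive hypothesis in the backward direction at level $\beta$ then yields $(\mc{A},\bar{a}\bar{c}) \leq_\beta (\mc{B},\bar{b}\bar{d})$, completing the Duplicator's response. The main subtlety is precisely this interlocking of the two directions at each inductive stage, together with the bookkeeping of how the enumeration of the extended tuples subdivides the original intervals; once that organization is in place the argument is essentially routine.
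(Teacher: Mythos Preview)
The paper does not give its own proof of this lemma; it is cited as a well-known fact appearing as Lemma~15.8 of Ash--Knight \cite{AK00}. Your interlocking transfinite induction is exactly the standard argument, and the organization (decompose a Spoiler move into interval pieces, apply the interval hypotheses, then reassemble via the inductive hypothesis in the opposite direction) is correct.

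One point to clean up: your unfolding of $\geq_\alpha$ is reversed relative to the paper's Definition~\ref{def:bfasym}. Under that definition, $(\mc{A},\bar a)\geq_\alpha(\mc{B},\bar b)$ means $(\mc{B},\bar b)\leq_\alpha(\mc{A},\bar a)$, so Spoiler's first move is in $\mc{A}$, not in $\mc{B}$; your forward-direction step should therefore begin with $\bar d\subseteq (a_i,a_{i+1})^{\mc{A}}$ and obtain a response $\bar c\in\mc{B}$. You use the opposite convention consistently throughout, so by the symmetry of the statement (swap $\mc{A}$ and $\mc{B}$) your argument still establishes the lemma, but you should align the write-up with the paper's definition. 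Also, strictly speaking the induction bottoms out at $\alpha=0$ rather than $\alpha=1$, since the step at $\alpha=1$ already invokes the $\beta=0$ case; this case is trivially true (both sides hold automatically for increasing tuples of equal length), but it should be stated as the base.
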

	
	In other words, the $\alpha$-back-and-forth type of a tuple ``factors'' into the back-and-forth types of the intervals defined by the tuples.
	That said, this correspondence cannot necessarily be performed on a formula-by-formula basis.
	In other words, given a $\Sigma_\alpha$ formula that holds of $\bar{c}$, it is not necessarily implied by a collection of $\Sigma_\alpha$ formulas about the intervals it defines. It turns out, as proved in \cite{GHT}, that the $\E_\alpha$/$\A_\alpha$ complexity classes are exactly the right complexity classes to make this true. This lemma is provably false when $\E_\alpha$ is replaced by $\Sigma_\alpha$.
	
	\begin{lemma}
		Let $\mc{A}$ be a countable linear order and $a_1<\cdots<a_n$ elements of $\mc{A}$. Suppose that $\mc{A}\models\varphi(a_1,\ldots,a_n)$ with $\varphi$ a $\E_\alpha$ formula in the language of linear orders. Then there are $\E_\alpha$ sentences $\theta_0,\ldots,\theta_n$ such that (a) for every $k = 0,\ldots,n$ we have $(a_k,a_{k+1}) \models \theta_k$, and (b) if $\mc{B}$ is any linear order and $b_1 < \cdots < b_n$, if for every $k = 0,\ldots,n$ we have $(b_k,b_{k+1}) \models \theta_k$ then $\mc{B} \models \varphi(b_1,\ldots,b_n)$. 
	\end{lemma}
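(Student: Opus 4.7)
The plan is to first establish a dual version of the lemma for $\vwA_\alpha$ formulas (giving $\A_\alpha$ sentences), and then bootstrap it to the stated $\E_\alpha$ case by choosing a witness for the outermost existential and partitioning each $\bar{a}$-interval into sub-intervals according to where that witness lands.

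For the dual claim, suppose $\mc{A} \models \chi(\bar{a})$ for some $\vwA_\alpha$ formula $\chi$. For each $k$, take $\widetilde{\tau}_k$ to be the $\A_\alpha$ sentence supplied by Proposition \ref{thm:bnfFormulas} (with empty tuple over the structure $(a_k,a_{k+1})^{\mc{A}}$) characterizing, as $\mc{C}$ varies, the relation $(a_k,a_{k+1})^{\mc{A}} \leq_\alpha \mc{C}$. Reflexivity gives the analogue of (a); for the analogue of (b), if each $(b_k,b_{k+1})^{\mc{B}} \models \widetilde{\tau}_k$ then Lemma \ref{lem:combine-bf} yields $(\mc{A},\bar{a}) \leq_\alpha (\mc{B},\bar{b})$, and $\vwA_\alpha$ formulas transfer along this relation by Proposition \ref{prop:transfer-over-bf}. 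Since $\A_\alpha \subseteq \vwA_\alpha$, the sentences are of the required type.

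For the stated lemma, write $\varphi = \bigdoublevee_i \exists \bar{y}_i \chi_i(\bar{x},\bar{y}_i)$ with each $\chi_i \in \vwA_{\beta_i}$, $\beta_i < \alpha$. By splitting each disjunct according to every possible equality pattern of $\bar{y}_i$ against $\bar{x}$ and itself (substituting and conjoining a quantifier-free pattern, which does not raise the $\vwA_{\beta_i}$ class), we may assume a witness $\bar{c}$ for the chosen disjunct $i$ is disjoint from $\bar{a}$ with distinct entries. Let $\bar{e}$ be the increasing reordering of $\bar{a}\bar{c}$ of length $m$, and $\chi_i'$ the variable-permutation with $\mc{A} \models \chi_i'(\bar{e})$. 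Apply the dual claim to $\chi_i'$ to obtain $\A_{\beta_i}$ sentences $\widetilde{\tau}_0,\ldots,\widetilde{\tau}_m$. The interval $(a_k,a_{k+1})^{\mc{A}}$ contains $r_k$ entries of $\bar{c}$, so its sub-intervals correspond to a consecutive block $\widetilde{\tau}_{p_k},\ldots,\widetilde{\tau}_{p_k+r_k}$, and we set
\[ \theta_k := \exists z_1 < \cdots < z_{r_k}\; \bigwedge_{s=0}^{r_k} \widetilde{\tau}_{p_k+s}^{(z_s,z_{s+1})}, \]
where $\widetilde{\tau}^{J}$ denotes the syntactic relativization of $\widetilde{\tau}$ to the sub-interval $J$, with $z_0$ and $z_{r_k+1}$ interpreted as the endpoints of the ambient interval (and in the degenerate case $r_k=0$ simply $\theta_k = \widetilde{\tau}_{p_k}$). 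Since relativization preserves the $\A_{\beta_i}$ class (a bounded $\forall y(y\in J \to \cdots)$ or $\exists y(y\in J \wedge \cdots)$ absorbs the atomic side-condition at every level of the $\A/\E$ hierarchy), we obtain $\theta_k \in \exists \A_{\beta_i} \subseteq \E_\alpha$. Property (a) holds by taking each $z_s$ to be the $s$-th entry of $\bar{c}$ in $(a_k,a_{k+1})^{\mc{A}}$ and invoking (a) of the dual claim; for (b), collect witnesses from each $(b_k,b_{k+1})^{\mc{B}} \models \theta_k$ into a tuple $\bar{d}$ so that the intervals of $\bar{b}\bar{d}$ in $\mc{B}$ satisfy the $\widetilde{\tau}$'s, then invoke (b) of the dual claim to get $\mc{B} \models \chi_i'(\bar{b}\bar{d})$, hence $\mc{B} \models \varphi(\bar{b})$.

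The main technical step is the routine verification that relativization preserves the $\A/\E$ hierarchy, together with the bookkeeping of the reindexing between $\bar{a}$, $\bar{c}$, and $\bar{e}$. Once those are in place, the heart of the argument is the clean interaction between the back-and-forth factorization of Lemma \ref{lem:combine-bf} and the fact that the $\A_\alpha/\E_\alpha$ hierarchy is exactly calibrated to the back-and-forth relations via Propositions \ref{thm:bnfFormulas} and \ref{prop:transfer-over-bf}.
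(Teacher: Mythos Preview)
The paper does not actually prove this lemma; it is stated and attributed to \cite{GHT}, where the argument appears in full. So there is no in-paper proof to compare against directly.

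Your argument is correct and is essentially the natural one, using exactly the three ingredients the paper advertises: Proposition~\ref{thm:bnfFormulas} to produce an $\A_\beta$ sentence capturing the $\leq_\beta$-type of each interval, Lemma~\ref{lem:combine-bf} to reassemble the interval-wise back-and-forth relations into the global relation $(\mc{A},\bar e)\leq_\beta(\mc{B},\bar f)$, and Proposition~\ref{prop:transfer-over-bf} to push the $\vwA_\beta$ formula across. Your observation that relativization to a definable interval preserves the $\A/\E$ hierarchy---because the atomic bounding condition is absorbed by closure under finite $\wedge,\vee$ at each level---is exactly right, and the reindexing bookkeeping is routine.

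One minor edge case worth making explicit: when some $\beta_i=0$ (in particular when $\alpha=1$), Proposition~\ref{thm:bnfFormulas} is not available. But then $\chi_i$ is quantifier-free, and in the language of linear orders a quantifier-free formula depends only on the order type of its arguments, so the $\widetilde\tau$'s may be taken to be $\top$ and each $\theta_k$ reduces to the $\Sigma_1$ assertion that the interval contains at least $r_k$ elements. Similarly, the $\A_\beta$ sentence for an empty interval can be taken to be $\forall y\,(y\neq y)$. These are cosmetic adjustments, not gaps.
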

	
	\noindent The Henkin construction for proving Theorem \ref{thm:ght} is a Henkin construction where all of the formulas are $\E_{\alpha+1}$.

	\subsection{Omitting types}\label{sec:omittingtypes}
	
	In this section, we will demonstrate how the $\E_\alpha$/$\A_\alpha$ formulas can be used in a Henkin construction by proving Theorem \ref{thmomitting1}. This theorem was already known to Gonzalez (unpublished), but the proof given here is simpler than the original proof.

	Scott's isomorphism theorem states that infinitary sentences can always be used as isomorphism invariants for countable structures \cite{Sco65}.
	Based on this, Montalb\'an defined his notion of robust Scott rank in a way that robustly aligns with the hyperarithmetic hierarchy \cite{MonSR}.
	In this section, we show that his notion of Scott rank also aligns well with our new notion of syntactic complexity.
	This gives a new set of criteria equivalent to Montalb\'an's notion of Scott rank, showing that it is even more robust than we previously thought.
	In particular, the addition of more infinitary conjunctions and disjunctions to our formulas does not increase their expressive power too much when it comes to defining an isomorphism invariant.
	To demonstrate these claims, we perform a type-omitting argument that extends and supersedes that of Montalb\'an from \cite{MonSR}.

	We begin by defining a new syntactic class closely related to the $\E_\alpha$ hierarchy.
	\begin{definition}
		A formula is $\forall\E_\alpha$ if it is of the form  $\bigwwedge_{i\in\omega} \forall \bar{x}_i \phi_i(x_i)$ with $\phi_i\in \E_\alpha$.
	\end{definition}
	
	To be explicit, the difference between $\forall\E_\alpha$ and $\A_{\alpha+1}$ is that $\A_{\alpha+1}$ adds universal quantifiers and conjunctions to $\vwE_\alpha$ formulas while $\forall\E_\alpha$ only adds universal quantifiers and conjunctions to the slightly more restricted class of $\E_\alpha$ formulas.
	We begin with a helpful lemma and then prove a type-omitting theorem for $\forall\E_\alpha$ formulas.
	
	\begin{lemma}\label{lem:internalSigma}
		Given a countable structure $\mc{M}$ and any $\varphi(\bar{x})\in\E_\alpha$ there is a $\Sigma_\alpha$ formula $\theta(\bar{x})$ such that
		\[\mc{M}\models  \forall \bar{x} \big(\theta(\bar{x}) \iff  \varphi(\bar{x})\big).\]
	\end{lemma}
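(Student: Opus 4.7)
The plan is to unpack the definition of $\E_\alpha$ directly and use the fact that $\vwA_\beta$ formulas are preserved by $\equiv_\beta$ (via \cref{prop:transfer-over-bf}) to replace each inner subformula inside $\mc{M}$ by a countable disjunction of canonical $\Pi_\beta$-type formulas. There is no need for an induction on complexity — one expansion and a rearrangement of quantifiers should suffice.

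Concretely, I would write $\varphi(\bar{x})$ in its canonical form
\[ \varphi(\bar{x}) = \bigdoublevee_i \exists \bar{y}_i \, \psi_i(\bar{x},\bar{y}_i), \]
where each $\psi_i \in \vwA_{\beta_i}$ for some $\beta_i < \alpha$. For each tuple $\bar{a}\bar{b} \in \mc{M}$, let $\theta^{\beta_i}_{\bar{a}\bar{b}}(\bar{x},\bar{y})$ denote the $\Pi_{\beta_i}$ formula given by \cite{MonBook} Lemma II.62 that defines the $\Pi_{\beta_i}$-type of $\bar{a}\bar{b}$ in $\mc{M}$, so that $\mc{M} \models \theta^{\beta_i}_{\bar{a}\bar{b}}(\bar{c},\bar{d})$ iff $(\mc{M},\bar{c},\bar{d}) \geq_{\beta_i} (\mc{M},\bar{a},\bar{b})$. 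The key claim is
\[ \mc{M} \models \forall \bar{x} \forall \bar{y} \left( \psi_i(\bar{x},\bar{y}) \longleftrightarrow \bigdoublevee_{(\bar{a},\bar{b}) \,:\, \mc{M} \models \psi_i(\bar{a},\bar{b})} \theta^{\beta_i}_{\bar{a}\bar{b}}(\bar{x},\bar{y}) \right). \]
The right-to-left direction is exactly where \cref{prop:transfer-over-bf} does the work: if $(\bar{c},\bar{d})$ satisfies $\theta^{\beta_i}_{\bar{a}\bar{b}}$ for some witness $(\bar{a},\bar{b})$, then $(\mc{M},\bar{a},\bar{b}) \leq_{\beta_i} (\mc{M},\bar{c},\bar{d})$, so since $\psi_i \in \vwA_{\beta_i}$ the proposition transfers $\mc{M} \models \psi_i(\bar{a},\bar{b})$ to $\mc{M} \models \psi_i(\bar{c},\bar{d})$. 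The left-to-right direction is trivial by taking $(\bar{a},\bar{b}) = (\bar{c},\bar{d})$.

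Substituting this equivalence into $\varphi$ and commuting $\exists \bar{y}_i$ past the inner countable disjunction yields
\[ \varphi(\bar{x}) \equiv_{\mc{M}} \bigdoublevee_i \, \bigdoublevee_{(\bar{a},\bar{b})} \exists \bar{y}_i \, \theta^{\beta_i}_{\bar{a}\bar{b}}(\bar{x},\bar{y}_i), \]
which is manifestly of the form $\bigdoublevee \exists \Pi_{<\alpha}$, i.e., a $\Sigma_\alpha$ formula $\theta(\bar{x})$ as desired. The only thing to double-check — the mild ``obstacle'' — is that the resulting disjunction is still countable, so that $\theta$ genuinely lies in $\Lomom$; this is immediate since $\mc{M}$ is countable and each $\beta_i < \alpha$, so the set of canonical formulas $\theta^{\beta_i}_{\bar{a}\bar{b}}$ used is countable and each is $\Pi_{<\alpha}$. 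No transfinite induction on $\alpha$ is required: the content of the lemma sits entirely in \cref{prop:transfer-over-bf} together with the existence of canonical $\Pi_\beta$-type formulas.
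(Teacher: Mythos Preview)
Your proof is correct and follows essentially the same approach as the paper's: both unpack $\varphi$ into $\bigdoublevee_i \exists \bar{y}_i\,\psi_i$ with $\psi_i \in \vwA_{\beta_i}$, replace each inner witness by the canonical $\Pi_{\beta_i}$-type formula from \cite{MonBook} Lemma~II.62, and use the transfer property of $\vwA_{\beta_i}$ formulas under $\leq_{\beta_i}$ (your \cref{prop:transfer-over-bf}) to verify the equivalence. The only cosmetic difference is that the paper indexes the outer disjunction by tuples $\bar a$ satisfying $\varphi$ and picks a single witness $\bar b_{\bar a}$ for each, whereas you index over all pairs $(\bar a,\bar b)$ satisfying each $\psi_i$; both yield a countable $\Sigma_\alpha$ disjunction.
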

	
	\begin{proof}
		Write the formula $\varphi(\bar{x})$ as $\bigvvee_{i\in \omega} \exists \bar{x}_i \psi_i(\bar{x}_i,\bar{y})$ where each $\psi_i$ is $\vwA_{\beta_i}$ for $\beta_i<\alpha$.
		Consider $S$, the set of witnesses to $\varphi(\bar{x})$ in $\mc{M}$ (note that if $S$ is empty, the statement of the lemma is trivial).
		For each $\bar{a}\in\mc{M}$, one of the disjuncts holds, say $\mc{M}\models \exists \bar{x}_j \psi_{i,\bar{a}}(\bar{x}_j,\bar{a})$.
		Let $\bar{b}_{\bar{a}}$ be a witness giving $\mc{M}\models \psi_i(\bar{b}_{\bar{a}},\bar{a})$.
		By \cite{MonBook} Lemma II.62, there is a $\Pi_{\beta_i}$ formula $\varphi_{\bar{a}}$ that isolates the $\Pi_{\beta_i}$ type of $\bar{b}_{\bar{a}},\bar{a}$ in $\mc{M}$.
		Consider the formula,
		\[\theta(\bar{x}) := \bigvvee_{\bar{a}\in S}  \exists \bar{x} \varphi_{\bar{a}}(\bar{y},\bar{x}).\]
		This is a $\Sigma_\alpha$ formula.
		If $\bar{a}\in S$ then $\mc{M}\models\theta(\bar{a})$ by construction.
		Moreover, if $\mc{M}\models\theta(\bar{c})$ then there is some witness $\bar{a}\in S$ and $\bar{d}\in \mc{M}$ such that $\mc{M}\models\varphi_{\bar{a}}(\bar{d},\bar{c})$.
		This means that $\bar{d},\bar{c}\geq_{\beta_i}\bar{b}_{\bar{a}},\bar{a}$ and so $\mc{M}\models \psi_i(\bar{d},\bar{c}$.
		In particular, this gives that $\bar{c}\in S$, or what is the same, $\mc{M}\models \varphi(\bar{c})$ as desired.
	\end{proof}
	
	We can now prove our type-omitting theorem.
	
	\begin{theorem}\label{thm:typeomit2}
		Let $\mc{M}$ be a countable structure. Let $(\Gamma_i)_{i \in \omega}$ be a list of $\Pi_{\alpha}$-types which are not $\Sigma_{\alpha}$-supported in $\mc{M}$. Let $\chi$ be a $\forall\E_\alpha$ sentence true in $\mc{M}$. Then there is $\mc{N}$ such that $\mc{N} \models \chi$ and $\mc{N}$ omits all of the $\Gamma_i$.
	\end{theorem}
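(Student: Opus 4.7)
The plan is to carry out a Henkin-style construction of $\mc{N}$, using $\E_\alpha$ formulas in place of the $\Sigma_\alpha$ formulas that appear in Montalb\'an's original type-omitting argument. At each stage $s$ we maintain a finite set $D_s$ of Henkin constants and an $\E_\alpha$ formula $\psi_s(\bar{x}_{D_s})$ which is realized in $\mc{M}$, i.e., $\mc{M}\models\exists\bar{x}\,\psi_s(\bar{x})$; the structure $\mc{N}$ will have domain the quotient of $\bigcup_s D_s$ by the equalities committed to in the $\psi_s$, with atomic diagram read off from the atomic conjuncts. The reason $\E_\alpha$ is a natural ambient class is that it is closed under finite $\wedge$ (distribute $\wedge$ across the outer $\bigvee$ and use closure of each $\vwA_\beta$ under $\wedge$ for $\beta<\alpha$), as well as under $\exists$ and $\bigvee$, so that strengthening $\psi_s$ by any of the obligations below keeps us inside $\E_\alpha$.

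A standard round-robin handles the usual Henkin obligations---enumerating constants, deciding every atomic formula on tuples from $D_s$ in a way compatible with the realizability of $\psi_s$, providing a witness constant for each $\exists$ occurrence in $\psi_s$, selecting a disjunct for each outer $\bigvee$, and instantiating every $\forall$ and $\bigwedge$ that occurs inside a $\vwA_\beta$ subformula on every already-named tuple---alongside two substantive obligations. First, for each conjunct $\forall\bar{x}_n\,\phi_n$ of $\chi$ and each tuple $\bar d\subseteq D_s$ of the right arity, add $\phi_n(\bar{x}_{\bar d})$ as a conjunct of $\psi_s$: since $\mc{M}\models\chi$ this strengthening is still realized in $\mc{M}$, and since $\phi_n\in\E_\alpha$ it remains in $\E_\alpha$. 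Second, to omit $\Gamma_i$ at a tuple $\bar d\subseteq D_s$, form the projection $\exists\bar{x}_{D_s\setminus\bar d}\,\psi_s$ (again $\E_\alpha$) and apply Lemma \ref{lem:internalSigma} to obtain an $\mc{M}$-equivalent $\Sigma_\alpha$ formula $\theta^\ast(\bar{x}_{\bar d})$. Since $\theta^\ast$ is realized in $\mc{M}$ and $\Gamma_i$ is not $\Sigma_\alpha$-supported, some realization $\bar{a}'$ of $\theta^\ast$ in $\mc{M}$ omits some $\gamma\in\Gamma_i$; extending $\bar{a}'$ to a full tuple realizing $\psi_s$ then witnesses that $\psi_s\wedge\neg\gamma(\bar{x}_{\bar d})$ is realized in $\mc{M}$, and this formula is in $\E_\alpha$ because $\neg\gamma\in\Sigma_\alpha\subseteq\E_\alpha$ and $\E_\alpha$ is closed under finite $\wedge$.

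Once all obligations have been handled, the usual Henkin induction on formula complexity shows that $\mc{N}\models\varphi$ for every conjunct $\varphi$ ever added to any $\psi_s$: the positive operations $\exists$ and $\bigvee$ are discharged by the witness- and disjunct-choice obligations, and the negative operations $\forall$ and $\bigwedge$ that occur inside the $\vwA_\beta$ subformulas are discharged by the instantiation obligations on every named tuple, recursing into strictly lower-complexity subformulas. Consequently $\mc{N}\models\phi_n(\bar c)$ for each conjunct of $\chi$ and every tuple $\bar c$, so $\mc{N}\models\chi$, and for each $i$ and every tuple $\bar c$ some $\gamma\in\Gamma_i$ has $\mc{N}\models\neg\gamma(\bar c)$, so $\mc{N}$ omits every $\Gamma_i$. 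The main difficulty lies in organizing the obligations so that each new addition to $\psi_s$ is compatible with all earlier commitments; this works because every earlier commitment is already a conjunct of $\psi_s$, and the next strengthening is chosen---via the ``not $\Sigma_\alpha$-supported'' hypothesis for the omission step, and directly from $\mc{M}\models\chi$ for the $\chi$-step---to be jointly realizable in $\mc{M}$. That $\E_\alpha$ is both expressive enough to accommodate the conjuncts of $\chi$ and admits an $\mc{M}$-internal $\Sigma_\alpha$ projection via Lemma \ref{lem:internalSigma} is exactly what makes the argument smoother than the original $\Pi_{\alpha+1}$-version.
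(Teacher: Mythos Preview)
Your proposal is correct and follows essentially the same approach as the paper's proof: a Henkin construction in which the working formulas are $\E_\alpha$, realizability in $\mc{M}$ is maintained throughout, and the type-omitting step succeeds via Lemma~\ref{lem:internalSigma}. The only cosmetic difference is that the paper invokes Lemma~\ref{lem:internalSigma} once at the outset to pass from ``not $\Sigma_\alpha$-supported'' to ``not $\E_\alpha$-supported'' and then uses $\E_\alpha$ non-support directly at each omission step, whereas you apply the lemma at each omission step to convert the current $\E_\alpha$ projection to an equivalent $\Sigma_\alpha$ formula; these are clearly equivalent uses.
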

	
	We begin by noting that, by Lemma \ref{lem:internalSigma}, if $\Gamma_i$ is not $\Sigma_{\alpha}$-isolated, then it is not $\E_\alpha$ isolated either. This is essentially the only step of this proof that differs from the proof of Montalbán's type omitting theorem from \cite{MonSR}. The key point is that by expanding the class of formulas considered, the same proof goes through but we obtain Theorem \ref{thmomitting1} as a consequence.
	
	To obtain Theorem \ref{thmomitting1} from Theorem \ref{thm:typeomit2}, we must note that in Proposition \ref{thm:bnfFormulas} we defined, given $\mc{M}$, a $\vwE_\alpha$ sentence $\varphi_{\mc{M},\alpha}$ such that for any structure $\mc{N}$,
	\[ \mc{N} \models \varphi_{\mc{M},\alpha} \Longleftrightarrow \mc{N} \leq_\alpha \mc{M}.\]
	Looking at the proof, $\varphi_{\mc{M},\alpha}$ is actually a countable conjunction of $\E_\alpha$ formulas and hence $\forall \E_\alpha$. Then to obtain Theorem \ref{thmomitting1} we apply Theorem \ref{thm:typeomit2} with  $\varphi_{\mc{M},\alpha}$ as $\chi$. Gonzalez had previously given a proof of Theorem \ref{thmomitting1} which differed significantly from Montalb\'an's; by using the right class of formulas, we obtain a simple proof of this stronger result. We include the full proof of Theorem \ref{thm:typeomit2} for completeness (in part because it does not appear in full in \cite{MonSR}).
	
	\begin{proof}
		Write $\chi$ as $\bigwwedge_{i\in\omega} \forall \bar{x}_i \phi_i(\bar{x}_i)$ with $\phi_i\in \E_\alpha$.
		We build a set of $\E_\alpha$ formulas $T$ over the vocabulary of $\mc{M}$ enriched with countably many constants $C$ to perform a Henkin-like construction.
		We insist on the following properties:
		\begin{enumerate}
			\item If $\bigdoublevee \psi_i \in T$, then for some $i$, $\psi_i \in T$.
			\item If $\exists \bar y \psi(\bar y) \in T$, then $\psi(\bar c) \in T$ for some constants $\bar c \in C$.
			\item If $\bigdoublewedge \psi_i \in T$, then for all $i$, $\psi_i \in T$.
			\item If $\forall \bar y \psi(\bar y) \in T$, then $\psi(\bar c) \in T$ for all $\bar c \in C$.
			\item For every atomic sentence $\psi$ over $\tau \cup C$, either $\psi \in T$ or $\neg \psi \in T$.
			\item For every $i$ and tuple $\bar{c}$ of length $\vert \bar{x}_i \vert$, $\varphi_i(\bar{c})\in T$.
			\item For every tuple $\bar{c}$ of length $\vert \bar{z} \vert$, there is a $\theta\in \Phi(\bar{z})$ such that $\lnot \theta(\bar{c})\in T$.
		\end{enumerate}
		
		Along the way, at each stage $s$, we will make sure that $T$ is satisfiable in the sense that there is some interpretation  $\nu_s:C\to\mc{M}$ that assigns the constants we have used so far to a set of elements that satisfy all of the formulas in $T$ that we have asserted about these constants.
		If we can do this, the claim is shown, as the Henkin model will satisfy $\chi$ by item (6) and omit $\Phi(\bar{z})$ by item (7).
		
		We begin with empty sets $T_0$, $C_0$, and $\nu_0$.
		At each stage we are given $C_s$ a finite subset of $C$, $T_s$, a finite set of  $\E_\alpha$ formulas only mentioning constants $C_s$, and $\nu_s:C_s\to\mc{M}$ with the property that $\mc{M}\models T_s(\nu_s(C_s))$.
		At each stage, we address one of the properties $(1)-(7)$, one instance at a time.
		We describe below how $T_s$, $C_s$, and $\nu_s$ are modified to achieve this.
		
		\begin{enumerate}
			\item Property $(1)$.
			Given $\bigdoublevee \psi_i \in T_s$ we know that $\mc{M}\models \bigdoublevee \psi_i (\nu_s(C_s)))$. 
			Pick a $j$ with the property that $\mc{M}\models \psi_j (\nu_s(C_s)))$ and add $\psi_j$ to $T_s$ to make $T_{s+1}$.
			Let $\nu_s=\nu_{s+1}$ and $C_{s+1}=C_s$.
			It is straightforward to confirm that the desired properties are maintained.
			\item Property $(2)$. 
			Given $\exists \bar y \psi(\bar y) \in T_s$ we know that  $\mc{M}\models  \exists \bar y\psi (\bar{y}, \nu_s(C_s)))$.
			Let $\bar{a}\in\mc{M}$ have the property that $\mc{M}\models \psi(\bar{a},\nu_s(C_s))$.
			Take new constants $\bar{d}\in C$ and let $C_{s+1}=C_s\cup\{\bar{d}\}$ and  $T_{s+1}= T_s\cup\{\psi(\bar{d})\}$.
			Extend $\nu_s$ by defining $\nu_{s+1}(\bar{d})=\bar{a}$.
			It is straightforward to confirm that the desired properties are maintained.
			\item Property $(3)$.
			Given $\bigdoublewedge \psi_i \in T_s$ and $i\in\omega$ let $T_{s+1}=T_s\cup\{\psi_i\}$, $C_{s+1}=C_s$ and $\nu_{s+1}=\nu_s$.
			It is straightforward to confirm that the desired properties are maintained.
			\item Property $(4)$.
			Given $\forall \bar y \psi(\bar y) \in T_s$ and $\bar{c}\in C_s$ of the appropriate length $T_{s+1}=T_s\cup\{\psi(\bar{c})\}$, $C_{s+1}=C_s$ and $\nu_{s+1}=\nu_s$.
			It is straightforward to confirm that the desired properties are maintained.
			\item Property $(5)$.
			For any atomic sentence $\psi$ and $\bar{c}\in C_s$ of the appropriate length check if $\mc{M}\models \psi(\nu_s(\bar{c}))$ or if $\mc{M}\models \lnot\psi(\nu_s(\bar{c}))$.
			In the former case, add $\psi(\bar{c})$ to $T_s$ to obtain $T_{s+1}$ and in the later case  add $\lnot\psi(\bar{c})$ to $T_s$ to obtain $T_{s+1}$.
			Either way, let $C_{s+1}=C_s$ and $\nu_{s+1}=\nu_s$.
			It is straightforward to confirm that the desired properties are maintained.
			\item Property $(6)$.
			Given $\varphi_i$ and $\bar{c}\in C_s$ of the appropriate length $T_{s+1}=T_s\cup\{\varphi_i(\bar{c})\}$, $C_{s+1}=C_s$ and $\nu_{s+1}=\nu_s$.
			It is straightforward to confirm that the desired properties are maintained.
			\item Property $(7)$.
			This is the key property of the type omitting argument and the only property that necessitates shifting $\nu_s$ from its already established action to obtain $\nu_{s+1}$.
			This step follows the standard type omitting playbook in a Henkin construction.
			Fix a tuple  $\bar{c}\in C_s$ of length $\vert \bar{z} \vert$ and let $\bar{d}$ be the elements of $C_s$ that are not among $\bar{c}$.
			We can write $\bigwedge T_s=\rho(\bar{c},\bar{d})$ where $\rho$ is a $\E_\alpha$ formula.
			Note that $\exists\bar{y}\rho(\bar{c},\bar{y})$ is also a $\E_\alpha$ formula that is satisfied in $\mc{M}$ by $\nu_s(\bar{c})$.
			In particular, because $\Phi(\bar{z})$ is not $\E_\alpha$ supported, there is some $\bar{a}\in \mc{M}$ that satisfies $\exists\bar{y}\rho(\bar{a},\bar{y})$ but does not have type $\Phi(\bar{z})$.
			Let $C_{s+1}=C_s$ and let $T_{s+1}=T_s\cup\{\lnot\theta(\bar{c})\}$ where $\theta\in \Phi(\bar{z})$ has the property that $\mc{M}\models \lnot\theta(\bar{a})$.
			Note that $\lnot\theta\in \E_\alpha$ as $\theta\in \A_\alpha$, so this is an allowable extension of $T$.
			Finally let $\nu_{s+1}(\bar{c})=\bar{a}$ and let $\nu_{s+1}(\bar{d})=\bar{b}$ where $\mc{M}\models \rho(\bar{a},\bar{b})$.
			It is straightforward to confirm that the desired properties are maintained.
		\end{enumerate}
		
		With this, we can achieve a model with properties $(1)-(7)$, and therefore the proof is complete.
	\end{proof}

	We now show that having a $\forall\E_\alpha$ Scott sentence is equivalent to having one that is $\Pi_{\alpha+1}$.
	This adds to a long list of equivalent conditions for having $\SR(\mc{M})\leq \alpha$ (see Theorem 1.1 of \cite{MonSR}). Parts of the proof below, in particular $(1)\implies(2)$ and $(2)\implies(3)$, are very similar to the corresponding parts of the proof of that theorem.

	\begin{theorem}\label{thm:moreRobust}
		The following are equivalent when the classes $\A$ and $\E$ are restricted to $\Lomom$:
		\begin{enumerate}
			\item $\mc{M}$ has a $\forall\E_\alpha$ Scott sentence.
			\item The $\A_\alpha$ type of every tuple in $\mc{M}$ is supported by an $\E_\alpha$ formula.
			\item Every automorphism orbit of every tuple in $\mc{M}$ is definable by a $\E_\alpha$ formula.
			\item $\mc{M}$ has a $\Pi_{\alpha + 1}$ Scott sentence, i.e., $\SR(\mc{M})\leq \alpha$.
		\end{enumerate}
	\end{theorem}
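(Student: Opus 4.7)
The plan is to prove the cyclic chain $(4) \Rightarrow (1) \Rightarrow (2) \Rightarrow (3) \Rightarrow (4)$. The implication $(4) \Rightarrow (1)$ is purely syntactic: every $\Pi_{\alpha+1}$ sentence is a countable conjunction of formulas $\forall \bar{y}\, \psi$ with $\psi \in \Sigma_\beta$ for some $\beta \leq \alpha$, and since $\Sigma_\beta \subseteq \E_\beta \subseteq \E_\alpha$, the sentence lies in $\forall \E_\alpha$. At the other end, $(3) \Rightarrow (4)$ follows by applying Lemma \ref{lem:internalSigma} to turn each $\E_\alpha$ formula defining an automorphism orbit into a $\Sigma_\alpha$ formula equivalent to it inside $\mc{M}$. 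Then every orbit is $\Sigma_\alpha$-definable in $\mc{M}$, and Montalb\'an's characterization (Theorem 1.1 of \cite{MonSR}) supplies a $\Pi_{\alpha+1}$ Scott sentence.

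The main step is $(1) \Rightarrow (2)$, which is where I would invoke the new type-omitting theorem. Assuming a $\forall\E_\alpha$ Scott sentence $\chi$, suppose for contradiction that some $\bar{a} \in \mc{M}$ has an $\A_\alpha$ type which is not $\E_\alpha$-supported. By the Corollary following Proposition \ref{prop:transfer-over-bf}, $\equiv_\alpha$-equivalent tuples agree on all $\A_\alpha$ formulas and all $\Pi_\alpha$ formulas, so the $\Pi_\alpha$ type and the $\A_\alpha$ type of a tuple determine the same equivalence class on tuples of $\mc{M}$. Hence any $\Sigma_\alpha$ formula supporting the $\Pi_\alpha$ type of $\bar{a}$ would, as a $\E_\alpha$ formula, also support the $\A_\alpha$ type; conversely, by Lemma \ref{lem:internalSigma}, any $\E_\alpha$ support can be replaced by a $\Sigma_\alpha$ support within $\mc{M}$. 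Thus the $\Pi_\alpha$ type $\Gamma$ of $\bar{a}$ is not $\Sigma_\alpha$-supported, and Theorem \ref{thm:typeomit2} yields a countable $\mc{N} \models \chi$ omitting $\Gamma$. Since $\mc{M}$ realizes $\Gamma$ via $\bar{a}$, we have $\mc{N} \not\cong \mc{M}$, contradicting the Scott sentence property of $\chi$.

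For $(2) \Rightarrow (3)$, fix $\bar{a} \in \mc{M}$ and let $\varphi_{\bar{a}} \in \E_\alpha$ support its $\A_\alpha$ type. The orbit of $\bar{a}$ is contained in $\varphi_{\bar{a}}(\mc{M})$ because automorphisms preserve $\varphi_{\bar{a}}$. For the reverse containment, given $\bar{b}$ with $\mc{M} \models \varphi_{\bar{a}}(\bar{b})$, I would run a countable back-and-forth inside $\mc{M}$ starting from $\bar{a} \mapsto \bar{b}$: at each stage, given matched tuples $\bar{a}\bar{c}$ and $\bar{b}\bar{d}$ with the same $\A_\alpha$ type, and a next move $c'$ from the left copy, the formula $\exists x\, \varphi_{\bar{a}\bar{c}c'}(\bar{x}, x)$ is $\E_\alpha$ and true of $\bar{a}\bar{c}$, hence true of $\bar{b}\bar{d}$ (same $\A_\alpha$ type equals same $\E_\alpha$ type), producing the required $d'$ with $\mc{M} \models \varphi_{\bar{a}\bar{c}c'}(\bar{b}\bar{d}, d')$ and thus preserving the invariant. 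The union of these partial maps is the desired automorphism sending $\bar{a}$ to $\bar{b}$. The main obstacle, as expected, is the $(1) \Rightarrow (2)$ step: it both requires the new $\E_\alpha$-flavoured type-omitting theorem and depends on the correspondence between the $\A_\alpha / \E_\alpha$ setup and the $\Pi_\alpha / \Sigma_\alpha$ formulation of Theorem \ref{thm:typeomit2}, mediated by Lemma \ref{lem:internalSigma}.
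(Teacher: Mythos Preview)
Your proposal is correct and follows essentially the same cycle $(4)\Rightarrow(1)\Rightarrow(2)\Rightarrow(3)\Rightarrow(4)$ as the paper, using Theorem~\ref{thm:typeomit2} for $(1)\Rightarrow(2)$, a back-and-forth built from the $\varphi_{\bar a}$'s for $(2)\Rightarrow(3)$, and Lemma~\ref{lem:internalSigma} for $(3)\Rightarrow(4)$. The only point you leave implicit is that the \emph{initial} pair $(\bar a,\bar b)$ in your $(2)\Rightarrow(3)$ argument already has the same $\A_\alpha$ type (and that the forth direction is handled symmetrically): from $\mc M\models\varphi_{\bar a}(\bar b)$ you get only one inclusion, and you need to invoke (2) for $\bar b$ to obtain $\varphi_{\bar b}$ and argue, as the paper does, that $\mc M\models\varphi_{\bar a}(\bar b)\Leftrightarrow\varphi_{\bar b}(\bar a)$; this is routine but should be stated.
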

	
	\begin{proof}

		We begin by showing that $(1)\implies(2)$.
		Say that $\mc{M}$ has a $\A_\alpha$ type $\Phi(\bar{z})$ that is not supported by any $\E_\alpha$ formula.
		Note that an  $\A_\alpha$ type is determined by its $\Pi_\alpha$ restriction by Proposition \ref{prop:transfer-over-bf}.
		Also, note that $\Phi(\bar{z})$ is not supported by any $\Sigma_\alpha$ formula.
		By Theorem \ref{thm:typeomit2} any $\chi\in \forall\E_\alpha$ is also true of a model $\mc{N}$ that omits $\Phi$ and therefore is not isomorphic to $\mc{N}$.

		We now move to prove that $(2)\implies(3)$.
		Let $\varphi_{\bar{a}}(\bar{x})$ be the $\E_\alpha$ formula that supports the $\A_\alpha$ type of $\bar{a}$.
		We claim that $\varphi_{\bar{a}}(\bar{x})$ defines the automorphism orbit of $\bar{a}$.
		To demonstrate this, we claim that the set
		$T:=\{(\bar{a},\bar{b}) \vert \mc{M}\models \varphi_{\bar{a}}(\bar{b})\}$
		is a back-and-forth set.
		As $\varphi_{\bar{a}}$ supports the whole $\A_\alpha$ type of $\bar{a}$ it is certainly the case that for $(\bar{a},\bar{b})\in T$, $\bar{a}$ and $\bar{b}$ have the same atomic diagram.
		Now consider $c\in\mc{M}$; we find a $d\in\mc{M}$ such that $\mc{M}\models \varphi_{\bar{a},c}(\bar{b},d)$.
		In other words, we must show that $\mc{M}\models \exists x  \varphi_{\bar{a},c}(\bar{b},x)$.
		If this does not hold, we have that $\mc{M}\models \lnot\exists x  \varphi_{\bar{a},c}(\bar{b},x)$.
		Note that this is a $\A_\alpha$ formula, therefore, it is implied by $\varphi_{\bar{a}}(\bar{b})$ which isolates the $\A_\alpha$ type of $\bar{b}$.
		However, this means that $\mc{M}\models \lnot\exists x  \varphi_{\bar{a},c}(\bar{a},x)$ as $\mc{M}\models \varphi_{\bar{a}}(\bar{a})$.
		That said,  $\mc{M}\models \exists x  \varphi_{\bar{a},c}(\bar{a},x)$ as witnessed by $c$, a contradiction.
		Therefore, there is such a $d$ as required.
		
		To see the ``forth'' part of the argument, it is enough to show that $\mc{M}\models\varphi_{\bar{a}}(\bar{b}) \iff \varphi_{\bar{b}}(\bar{a})$ and appeal to a symmetrical argument as the one above.
		Say that $\mc{M}\models \varphi_{\bar{a}}(\bar{b}) \land  \lnot\varphi_{\bar{b}}(\bar{a})$.
		As $ \lnot\varphi_{\bar{b}}$ is $\A_\alpha$, we obtain that it is implied by $\varphi_{\bar{a}}$ which supports the whole $\A_\alpha$ type in $\mc{M}$.
		This is a contradiction as it yields $\mc{M}\models  \lnot \varphi_{\bar{b}}(\bar{b})$.
		Therefore, $T$ is a back-and-forth set and any $\bar{b}$ with $\mc{M}\models \varphi_{\bar{a}}(\bar{b})$ is automorphic to $\bar{a}$.

		We now show that $(3)\implies(4)$.
		This is just the same as showing that there is a $\Sigma_\alpha$ description of any automorphism orbit in $\mc{M}$ if there is a $\E_\alpha$ description of any automorphism orbit in $\mc{M}$.
		This follows immediately from Lemma \ref{lem:internalSigma}.

		Lastly, we note that $(4)\implies (1)$.
		If $\SR(\mc{M})\leq \alpha$ then $\mc{M}$ has a $\Pi_{\alpha+1}$ Scott sentence.
		As every $\Pi_{\alpha+1}$ formula is $\forall\E_\alpha$, it follows that $\mc{M}$ also has a $\forall\E_\alpha$ Scott sentence.
	\end{proof}
	
	\cor*
	
	\begin{proof}
		For (1), $\mc{M}\leq_{\alpha} \mc{N} \implies \mc{M}\cong \mc{N}$. There is a $\Pi_{\alpha+2}$ sentence defining the set of $\mc{N}$ such that $\mc{M}\leq_{\alpha} \mc{N}$. By assumption, this is a Scott sentence.
		
		For (2), suppose that $\mc{N} \leq_{\alpha} \mc{M} \implies \mc{M}\cong \mc{N}$. Then, by Theorem \ref{thmomitting1}, there is a structure $\mc{N} \leq_\alpha \mc{M}$ such that $\mc{N}$ omits every $\Pi_\alpha$ type which is not $\Sigma_\alpha$-supported in $\mc{M}$. Thus $\mc{N} \cong \mc{M}$ and every $\Pi_\alpha$ type realized in $\mc{M}$ is $\Sigma_\alpha$-supported in $\mc{M}$. Thus $\mc{M}$ has a $\Pi_{\alpha+1}$ Scott sentence.
	\end{proof}
	
	\bibliographystyle{alpha}
	\bibliography{references}
	
\end{document}